\documentclass{amsart} 
\usepackage{graphicx}  
\usepackage{hyperref}
\usepackage{amsmath, amsthm, amssymb, amscd}
\usepackage{mathrsfs}
\usepackage{mathtools}
\usepackage{enumerate}
\usepackage{hyperref}
\usepackage{verbatim}
\usepackage{fixltx2e}
\usepackage{subcaption}
\usepackage{cite}
\usepackage{color}
\usepackage{esint}
\usepackage{tikz-cd}
\usetikzlibrary{shapes.geometric}
\usepackage[shortlabels]{enumitem}
 
\usepackage{bm}
\usepackage{bbm, dsfont}
\definecolor{mycolor}{rgb}{0.0, 0.75, 1.0}

\makeatletter
\newcommand{\labitem}[2]{%
\def\@itemlabel{\textbf{#1}}
\item
\def\@currentlabel{#1}\label{#2}}
\makeatother

\title[Cartesian products of Sierpi\'nski carpets]{Cartesian products of Sierpi\'nski carpets do not attain their conformal dimension}
\author{Riku Anttila}
\author{Sylvester Eriksson-Bique}
\email{sylvester.d.eriksson-bique@jyu.fi}
\address{Department of Math. and Stat.
P.O. Box 35 \\
FI-40014 University of Jyväskylä}

\thanks{RA is supported by Finnish Ministry of Education and Culture's Pilot for Doctoral Programmes (Pilot project Mathematics of Sensing, Imaging and Modelling).
SEB is supported by the Research Council of Finland via the project \emph{GeoQuantAM: Geometric and Quantitative Analysis on Metric spaces}, grant no. 354241.
}
\subjclass[2020]{30L10, 20F65, 51F99, 53C23, 28A78}
\keywords{Conformal dimension,  Ahlfors regular, quasisymmetric maps, attainment problem,  Sierpi\'nski carpet, product spaces, energy measures, Sobolev spaces, tensorization}
\date{\today}

\usepackage[shortlabels]{enumitem}
 
\newtheorem{theorem}[equation]{Theorem}
\newtheorem{lemma}[equation]{Lemma}

\newtheorem{proposition}[equation]{Proposition}
\newtheorem{corollary}[equation]{Corollary}

\numberwithin{equation}{section}

\theoremstyle{definition}
\newtheorem{definition}[equation]{Definition}
\newtheorem{assumption}[equation]{Assumption}

\theoremstyle{remark}
\newtheorem{remark}[equation]{Remark}

    \newcommand*{\N}{\mathbb{N}}
    
    \newcommand*{\R}{\mathbb{R}}
    
        \DeclarePairedDelimiter\Span{\langle}{\rangle}
        
        \DeclareMathOperator{\diam}{diam}
        \DeclareMathOperator{\dist}{dist}
        \DeclareMathOperator{\Hr}{H}
        \DeclareMathOperator{\len}{len}
        \DeclareMathOperator{\Lip}{Lip}
        
        \DeclareMathOperator{\PI}{PI}
        \DeclareMathOperator{\VD}{\rm VD}
        \DeclareMathOperator{\cCap}{\rm Cap}
        \DeclareMathOperator{\CS}{\rm CS}
        \DeclareMathOperator{\cE}{\mathcal{E}}
    
        \DeclareMathOperator{\cF}{\mathcal{F}}

        \DeclareMathOperator{\Mod}{Mod}

        \DeclarePairedDelimiter\abs{\lvert}{\rvert}
        \DeclarePairedDelimiter\norm{\lVert}{\rVert}
        
        \DeclarePairedDelimiter{\ceil}{\lceil}{\rceil}
        
\def\vint_#1{\mathchoice%
          {\mathop{\kern 0.2em\vrule width 0.6em height 0.69678ex depth -0.58065ex
                  \kern -0.8em \intop}\nolimits_{\kern -0.4em#1}}%
          {\mathop{\kern 0.1em\vrule width 0.5em height 0.69678ex depth -0.60387ex
                  \kern -0.6em \intop}\nolimits_{#1}}%
          {\mathop{\kern 0.1em\vrule width 0.5em height 0.69678ex
              depth -0.60387ex
                  \kern -0.6em \intop}\nolimits_{#1}}%
          {\mathop{\kern 0.1em\vrule width 0.5em height 0.69678ex depth -0.60387ex
                  \kern -0.6em \intop}\nolimits_{#1}}}
\def\vintslides_#1{\mathchoice%
          {\mathop{\kern 0.1em\vrule width 0.5em height 0.697ex depth -0.581ex
                  \kern -0.6em \intop}\nolimits_{\kern -0.4em#1}}%
          {\mathop{\kern 0.1em\vrule width 0.3em height 0.697ex depth -0.604ex
                  \kern -0.4em \intop}\nolimits_{#1}}%
          {\mathop{\kern 0.1em\vrule width 0.3em height 0.697ex depth -0.604ex
                  \kern -0.4em \intop}\nolimits_{#1}}%
          {\mathop{\kern 0.1em\vrule width 0.3em height 0.697ex depth -0.604ex
                  \kern -0.4em \intop}\nolimits_{#1}}}

\newcommand{\kint}{\vint}

\newcommand{\aveint}[2]{\mathchoice%
          {\mathop{\kern 0.2em\vrule width 0.6em height 0.69678ex depth -0.58065ex
                  \kern -0.8em \intop}\nolimits_{\kern -0.45em#1}^{#2}}%
          {\mathop{\kern 0.1em\vrule width 0.5em height 0.69678ex depth -0.60387ex
                  \kern -0.6em \intop}\nolimits_{#1}^{#2}}%
          {\mathop{\kern 0.1em\vrule width 0.5em height 0.69678ex depth -0.60387ex
                  \kern -0.6em \intop}\nolimits_{#1}^{#2}}%
          {\mathop{\kern 0.1em\vrule width 0.5em height 0.69678ex depth -0.60387ex
                  \kern -0.6em \intop}\nolimits_{#1}^{#2}}}

\begin{document}

\maketitle

\begin{abstract}
It is a long-standing open question to determine whether the Sierpi\'nski carpet attains its conformal dimension or not.
While this problem remains unresolved, we prove that Cartesian products $\mathbb{S}^k$, where $\mathbb{S}$ is the Sierpi\'nski carpet and $k \geq 2$, do not attain their conformal dimension.
Our approach is based on the Sobolev spaces and energy measures on $\mathbb{S}$ -- constructed by Shimizu, Kigami, and Murugan and Shimizu -- together with a certain singularity result of energy measures from the theory of analysis on fractals.
This work formulates a general non-attainment result of conformal dimension for product metric spaces $X^k$ for $k \geq 2$ in terms of self-similarity and energy measures of the factor $X$. It applies, in particular, to the cases where $X$ is the Sierpi\'nski carpet, the Sierpi\'nski gasket, the Menger sponge, and the Laakso diamond.

\end{abstract}

\section{Introduction}

\subsection{Background}

The \emph{Ahlfors regular conformal dimension} of a metric space $X$ is the infimum of Hausdorff dimensions of all metric spaces $Y$ that are Ahlfors regular and quasisymmetrically homeomorphic to $X$. This notion originates in the work of Bourdon and Pajot \cite{BourdonPajot} and is a variant of the \emph{conformal dimension} introduced by Pansu \cite{Pansu}.
Precise definitions are given in Section \ref{sec:preli}, and a more comprehensive treatment can be found in the book by Mackay and Tyson \cite{MT}.
For brevity, we refer to the Ahlfors regular conformal dimension simply as the conformal dimension throughout.

A central concept in the literature on conformal dimension is the \emph{attainment problem}, which asks whether the infimum of Hausdorff dimensions of $Y$, as described in the previous paragraph, is a minimum.
In fact, this problem turns out to be related to a remarkably large number of questions in different fields.
To name a few notable ones, Bonk and Kleiner showed that the well-known Cannon's conjecture can be reformulated in terms of an attainment problem of conformal dimension for the visual boundary of hyperbolic groups \cite{BK05}. 

There are further surprising connections between attainment problems and existence problems for exotic metrics and measures. In the work of the second author and G. C. David \cite{GuySylvester}, it was observed that the attainment for a certain Pillow space implies the existence of an analytically 1-dimensional plane in the sense of Cheeger \cite{Cheeger}.
Whether such a plane exists was posed by Kleiner and Schioppa \cite{KleinerSchioppa} and remains an open problem.

Lastly, Bonk and Kleiner \cite{BK05} observed that attainment implies, for a large class of group boundaries, that they are quasisymmetrically equivalent to a Loewner space in the sense of Heinonen and Koskela \cite{HK}. See also the discussion in Kleiner's  ICM survey \cite{KleinerICM}. A similar result applies to a large class of self-similar, combinatorially Loewner spaces by \cite[Section 1.6]{CEB}.
For motivation and references about Loewner spaces, or more generally PI-spaces, see \cite{HKST,bjorn2011nonlinear,He}.

Despite the extensive research, it is an unfortunate fact that the attainment problem has been resolved only for a very limited class of examples.
One source of this difficulty appears to be that the geometries of the metric spaces for which the attainment problem is of interest are highly intricate. As a result, the existing tools do not seem capable of determining attainment when only non-explicit information is available. A way forward was suggested by Murugan and Shimizu who studied the relationship of energy measures to attainment problems \cite{MuruganShimizu}. Here, we take this concept a step further by showing how such tools can be used to determine attainment. 

An interested reader can find from \cite{MT} a list of examples where the attainment have been deduced.
See also the recent work by the authors and Rainio \cite{anttila2024constructions,RainioLaakso26} which provides a quite rich class of geometries where the attainment can be realized in a non-trival way, as-well as examples where the attainment fails. While these examples are varied, we note that the topologies of the examples in \cite{anttila2024constructions,RainioLaakso26} are somewhat repetitive; most of them are homeomorphic to the universal Menger curve by \cite{Anderson-1,Anderson-2}. It would be desirable to have more varied examples, such as manifolds and topological carpets, where the attainment problem can be decided. 

\subsection{Main result}
A notorious example for which the attainment of the conformal dimension remains open is the \emph{Sierpi\'nski carpet} \cite[Problem 6.2]{BK05}.
The value of the conformal dimension is also unknown \cite[Open problem 15.22]{He} but quite impressive estimates are available due to Kwapisz \cite{Kwapisz}.
The hypothetical realization of the attainment was recently studied by Murugan and Shimizu \cite[Section 9]{MuruganShimizu}.

While the present work does not resolve the attainment problem for the Sierpi\'nski carpet, our main result is a proof of the following non-attainment result.

\begin{theorem}\label{main:Carpet}
    If $\mathbb{S}$ is the standard Sierpi\'nski carpet and $k \geq 2$, then the Cartesian product $\mathbb{S}^k$ does not attain its conformal dimension.
\end{theorem}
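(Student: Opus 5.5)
We argue by contradiction. Write $Q=\dim_{\mathrm{AR}}\mathbb{S}^k$, and suppose $f:\mathbb{S}^k\to Y$ is a quasisymmetry onto a $Q$-Ahlfors regular space $Y$.

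\textbf{Step 1: identify $Q$.} By the known product formula for conformal dimension of self-similar spaces such as the carpet (via combinatorial modulus and the critical exponent $p_*$), we have $Q=\dim_{\mathrm{AR}}\mathbb{S}^k=k\,\dim_{\mathrm{AR}}\mathbb{S}=k\,p$, where $p:=\dim_{\mathrm{AR}}\mathbb{S}>1$. The general principle we expect is that an attaining metric forces $Y$ to be $Q$-Loewner, or at least to support a $Q$-Poincaré inequality in the combinatorial sense, following \cite[Section 1.6]{CEB} and \cite{MuruganShimizu}.

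\textbf{Step 2: pass to Sobolev theory.} If attainment holds, the pullback of the $Q$-Hausdorff measure of $Y$ to $\mathbb{S}^k$ is a doubling measure $\mu$. By the Murugan--Shimizu mechanism, for $Q$-energy functions, namely the coordinate functions of $f$ or the $Q$-harmonic functions on $Y$, the $Q$-energy measures are absolutely continuous with respect to $\mu$, and in fact comparable to $\mu$. On the other hand, the $p$-Sobolev space on $\mathbb{S}$ of Shimizu, Kigami and Murugan--Shimizu has a self-similar energy measure $\Gamma_p\langle u\rangle$. Tensorization will then show that functions depending on one coordinate, $u\circ\pi_i$, are in the $Q$-energy class on $\mathbb{S}^k$, with energy measure $\Gamma_p\langle u\rangle\otimes \mathcal{H}^{d}\otimes\cdots$ (or the corresponding self-similar measure) in the product.

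\textbf{Step 3: contradiction via singularity.} Consider the product of energy measures $\Gamma_p\langle u\rangle\otimes\Gamma_p\langle v\rangle$. The $Q$-energy of $u\circ\pi_1$ must be absolutely continuous with respect to $\mu$, and $\mu$ must be comparable, on product cells, to products of factor measures. The key input is that energy measures on $\mathbb{S}$ are singular with respect to the self-similar Hausdorff measure (the singularity result from analysis on fractals, valid for $p$ strictly beyond the walk dimension threshold). Using self-similarity of $\mu$ across product cells $K_w\times K_{w'}$ with $|w|\ne|w'|$ allowed, we will show that $\mu$ must be simultaneously comparable to $\Gamma\langle u\rangle\otimes\nu$ and to $\nu'\otimes\Gamma\langle v\rangle$. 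Here $\nu,\nu'$ arise by mixing scales in the two coordinates, which is possible precisely because $k\ge2$. Comparability in both coordinates forces the factor marginals of $\mu$ to be comparable both to an energy measure and to the self-similar measure. This contradicts singularity.

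\textbf{Main obstacle.} The hardest step is the middle one: rigorously showing that a hypothetical attaining measure $\mu$ on the product has marginals and conditionals controlled by energy measures of the factor. This requires tensorization of the $p$-energy on the product, together with a rigidity statement that minimal energy functions on $\mathbb{S}^k$ in direction $i$ are essentially functions of $x_i$. I would try first to use the quasi-product structure of cells under non-isotropic rescalings, applying the Murugan--Shimizu Step 2 argument separately to each coordinate direction.
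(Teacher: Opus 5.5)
There are two genuine gaps. The first is in your Step 1, and the error propagates. There is no product formula $\dim_{\mathrm{AR}}\mathbb{S}^k=k\dim_{\mathrm{AR}}\mathbb{S}$. Conformal dimension is not additive under products: $\mathbb{S}\times[0,1]$ attains its conformal dimension $\log 8/\log 3+1$, which exceeds $\dim_{\mathrm{AR}}\mathbb{S}+1$.

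The correct exponent comes from the walk dimension. Let $\beta_p$ be the $p$-walk dimension of the Murugan--Shimizu $p$-energy on $\mathbb{S}$. The tensorized $p$-energy on $\mathbb{S}^k$ is then a $(p,\beta_p)$-Poincar\'e--Dirichlet space on a space of Hausdorff dimension $k\log 8/\log 3$. So $Q$ is the exponent with $\beta_Q=k\log 8/\log 3$. By contrast, $\dim_{\mathrm{AR}}\mathbb{S}$ solves $\beta_p=\log 8/\log 3$, and the two differ by exactly the factor $k$ only if $\beta_p/p$ is constant in between. Only at this $Q$ is the product energy the conformally invariant, combinatorially Loewner one.

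Consequently, in Step 2 you must tensorize the factor's $Q$-energy. The $p$-energy with $p=\dim_{\mathrm{AR}}\mathbb{S}$ does not tensorize to a $Q$-energy on $\mathbb{S}^k$, so $u\circ\pi_1$ with $u$ in that $p$-Sobolev space is the wrong object. The singularity you actually need is $\Gamma_Q\langle u\rangle\perp\mu_{\mathbb{S}}$, which comes from $\beta_Q>Q$ and geodesicity.

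The second gap is more serious: Step 3 has no working mechanism. The pulled-back Hausdorff measure $\nu$ of an attaining metric is not self-similar. Energy measures are only absolutely continuous with respect to it, not comparable. Nothing makes $\nu$ comparable to product measures on cells. Rescalings onto $K_w\times K_{w'}$ with $|w|-|w'|$ unbounded are not uniformly quasisymmetric, so they produce no new attaining metrics.

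What can be proved (take $k=2$ for notation) is the following:
\begin{itemize}
\item $\nu$ is a minimal energy dominant measure of the product $Q$-energy. Domination comes from the conformal invariance. Minimality holds because, for a geodesic attaining $\rho$, the function $\rho(\cdot,x_0)$ has energy measure $\gtrsim\nu$.
\item The minimal energy dominant measure of the tensor product is $\Lambda\otimes\mu_{\mathbb{S}}+\mu_{\mathbb{S}}\otimes\Lambda$. This is a sum, not a product of energy measures.
\end{itemize}
This alone is not contradictory; the same holds for $[0,1]$.

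The missing idea is a blow-up:
\begin{itemize}
\item Take a Lebesgue point of $\mathrm{d}(\Lambda\otimes\mu_{\mathbb{S}})/\mathrm{d}\nu$ with positive density, and rescale by the carpet similitudes with the same scale in both factors.
\item Arzel\`a--Ascoli gives a new attaining metric $\tilde\rho$ whose Hausdorff measure is comparable to a weak limit $\tilde\nu$.
\item The rescaled $\nu$ and the rescaled $\Lambda\otimes\mu_{\mathbb{S}}$ become close in total variation, and weak limits of products are products of the limits. Hence $\tilde\nu\ll\tilde\Lambda\otimes\mu_{\mathbb{S}}$.
\item Applying minimal dominance to $\tilde\rho$ gives $\mu_{\mathbb{S}}\otimes\Lambda\ll\tilde\nu$.
\item Since $\Lambda\perp\mu_{\mathbb{S}}$ in the second factor, this forces $\mu_{\mathbb{S}}\otimes\Lambda=0$, a contradiction.
\end{itemize}
Your ``main obstacle'' paragraph correctly locates where the argument is missing, but as written the proposal does not close it.
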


Kleiner had conjectured in \cite[Conjecture 7.5]{KleinerICM} that approximately self-similar and \emph{combinatorially Lowner} metric spaces attain their conformal dimension (see \cite{Clais} for a definition that is revised from \cite{BourK}). By \cite[Section 4.3]{BourK} or \cite[Remark 4.5 and Theorem 8.3]{ReplacementGraph24}, the Cartesian product $\mathbb{S}^k$ satisfies the combinatorial Loewner property. Theorem \ref{main:Carpet}, therefore, provides new counterexamples to Kleiner's conjecture \cite[Conjecture 7.5]{KleinerICM}. While the conjecture was first resolved in \cite{anttila2024constructions}, all counterexamples therein are homeomorphic to the Menger curve. It was posed in \cite{GuySylvester} if one can construct counterexamples with other topologies. The present work provides the first counterexamples of arbitrary topological dimension. 

Our methods are not restricted to the Sierpi\'nski carpet, and apply to many self-similar spaces. See the end of the introduction for a detailed discussion.

\subsection{Main techniques}
The starting point of the present work is the following elementary observation that illustrates a rigidity of product spaces in quasiconformal geometry.
Let us note that the we are not the first one to exploit similar ideas. See the work of Väisälä from 1989 \cite[Section 5]{VaisalaCylinder89}, and the more recent study on Carnot groups by Le Donne and Xie \cite{LedonneXie16}.
See also \cite[Example 4.1.9]{MT}

Suppose that $f : X \to Y$ is a quasisymmetry. Then it is false in general that the induced homeomorphism $\tilde{f} : X^2 \to Y^2$ where $\tilde{f}(x_1,x_2) := (f(x_1),f(x_2))$ is a quasisymmetry.
The mapping $f : \mathbb{C} \to \mathbb{
C}$ where $f(z) := \abs{z}z$ is one such example.
In fact, ignoring some obvious examples like bi-Lipschitz or snowflake maps, $\tilde{f}$ is virtually never a quasisymmetry in practical situations.
In what follows, the attainment problem for $X$ and $X^2$ are separate questions.
We strongly encourage the reader to carry out some simple computations and to draw some pictures in order to be convinced about this fact.

Based on the discussion above, we formulate the following heuristic principle which, aside from examples like $\R$, seems to be tentative:
\emph{Regardless of whether $X$ attains its conformal dimension, the hypothetical attaining structure of $X^k$ should not have a product structure arising from $X$.}

In order to exploit this vague intuition, we develop an analytic machinery based on \emph{energy measures} and \emph{blow-up analysis}. While these tools may appear somewhat heavy and unnatural for our problem, we are not aware of any conceptually different approach that would prove Theorem \ref{main:Carpet}.
Our ideas are inspired by the two pioneering works, Kajino and Murugan on the conformal walk dimension \cite{KajinoMurugan}, and also Murugan and Shimizu reformulating the attainment of the conformal dimension on $\mathbb{S}$ in terms of energy measures \cite{MuruganShimizu}.
In the remainder of the introduction, we sketch the proof of Theorem \ref{main:Carpet} and take $k = 2$ for simplicity.
We argue by contraposition, i.e., assume that the attainment is realized at $(\mathbb{S}^2,\rho)$ and denote its conformal dimension by $Q$.

We first describe the tools in our blow-up analysis. Note that $\mathbb{S}^2$ as a subset of $\R^{4}$ is an attractor of an IFS $\{\Phi_D\}_{D \in \mathcal{D}}$ where each $\Phi_D$ is a composition of a translation and a contraction with contraction ratio equal to $1/3$.
We consider limits of the metric spaces $(\mathbb{S}^2,\rho_{n})$ where $\rho_n$ is a normalized pull-back metric of a composition $\Phi_{D_1} \circ \cdots \circ \Phi_{D_n}$. Note that $\Phi_D$ is not necessarily similitude in the metric $\rho$, but we can derive suitable uniform estimates that are required to use Gromov's compactness theorem. As a result, we obtain a limit $(\mathbb{S}^2,\tilde{\rho})$ in the Gromov--Hausdorff sense.
As it is shown in Lemma \ref{lemma:Blow-up-2}, $(\mathbb{S}^2,\tilde{\rho})$ is also an attaining metric space.

Next, we discuss the $Q$-energy measures on $\mathbb{S}^2$, and we begin by considering the analogous objects on the factor $\mathbb{S}$ that are provided by the work of Murugan and Shimizu \cite{MuruganShimizu}.
In this specific case, we could also refer to the earlier works of Kigami \cite{kigami} and Shimizu \cite{shimizu} due to the same explanation as in Remark \ref{rem:Menger}.
Let $Q$ be the conformal dimension of $\mathbb{S}^2$ and $\mathcal{E}_\mathbb{S} : L^Q(\mathbb{S},\mu_\mathbb{S}) \to [0,\infty]$ be the $Q$-energy form in \cite[Theorem 1.1]{MuruganShimizu} where $\mu_\mathbb{S}$ is the $\log(8)/\log(3)$-Hausdorff measure of $\R^2$. We also let $\mathcal{F}_\mathbb{S} := \mathcal{E}_\mathbb{S}^{-1}([0,\infty))$ be the Sobolev space, and finally $\{\Gamma_\mathbb{S}\Span{f}\}_{f \in \mathcal{F}_\mathbb{S}}$ be the $Q$-energy measures in \cite[Theorem 1.2]{MuruganShimizu}.

Roughly speaking, $\mathcal{E}_\mathbb{S}$ is an abstract counterpart of the Dirichlet energy $f \mapsto \norm{ \nabla f }_{L^Q(\R^n)}^Q$, which in particular means that $\mathcal{F}_\mathbb{S}$ plays the role of the first-order Sobolev space $W^{1,Q}(\R^n)$. The purpose of the energy measures $\Gamma_\mathbb{S}\Span{f}$ is to be a counterpart of the ``localized energies'', i.e. the Radon measures $A \mapsto \norm{ \nabla f }_{L^Q(A)}^Q$.
The existence of these energy measures is a non-trivial fact, and they are, unfortunately, rather inexplicit. To the best of our knowledge, the currently available tools do not yield a single non-constant function $f \in \mathcal{F}_\mathbb{S}$. All constructions known to us rely heavily on abstract compactness arguments.

The construction of energy measures on the product $\mathbb{S}^2$ is based on the energy measures on the factor $\mathbb{S}$ and a method introduced by Strichartz \cite{Strichartz}.
Let $\mu = \mu_\mathbb{S} \otimes \mu_\mathbb{S}$ be the product measure on $\mathbb{S}^2$, which is easily seen to be comparable to the $\log(64)/\log(3)$-Hausdorff measure.
Consider the $Q$-energy form $\mathcal{E} : L^Q(\mathbb{S}^2,\mu) \to [0,\infty]$ given by the equation
\begin{equation*}
    \mathcal{E}(u) := \int_{\mathbb{S}} \mathcal{E}_\mathbb{S}( u(x,\,\cdot\, ) ) \, \text{d}\mu_{\mathbb{S}}(x) +  \int_{\mathbb{S}} \mathcal{E}_\mathbb{S}( u( \,\cdot\,,y ) )\, \text{d}\mu_{\mathbb{S}}(y). 
\end{equation*}
Here $u(x,\,\cdot\, )(y) := u(x,y)$ and $u( \,\cdot\,,y )(x) := u(x,y)$ are the sections of $u$.
The Sobolev space is denoted by $\mathcal{F} := \mathcal{E}^{-1}([0,\infty))$, and each $u \in \mathcal{F}$ is assigned the $Q$-energy measure
\begin{equation*}
    \Gamma\Span{u}(\Omega) := \int_{\mathbb{S}}\Gamma_\mathbb{S}\Span{u(x,\, \cdot\,)}(\Omega_x)\, \text{d}\mu_\mathbb{S}(x) + \int_{\mathbb{S}}\Gamma_\mathbb{S}\Span{u(\, \cdot\,,y)}(\Omega^y)\, \text{d}\mu_\mathbb{S}(y)
\end{equation*}
where $\Omega_x := \{ y : (x,y) \in \Omega \}$ and $\Omega^y := \{ x : (x,y) \in \Omega \}$ are the slices of $\Omega$.

\subsection{Sketch of the proof}
The proof scheme of Theorem \ref{main:Carpet} is as follows.
The first step is to show that the attaining metric space $(\mathbb{S}^2,\rho)$ must have the following product structure.
If a Radon measure $\Lambda$ on $\mathbb{S}$ is a \emph{minimal energy dominant measure} of $\mathcal{E}_{\mathbb{S}}$ in the sense of Hino \cite{Hino10} (see Section \ref{sec:Dirichlet} for the definition) and $\nu$ is the $Q$-Hausdorff measure of $(\mathbb{S}^2,\rho)$, then $(\mathbb{S}^2,\rho)$ has a product structure:
\begin{equation}\label{Intro:eq-2}
    \nu \text{ is mutually absolutely continuous with } \Lambda \otimes \mu_{\mathbb{S}} + \mu_{\mathbb{S}} \otimes \Lambda.
\end{equation}
This follows from Corollary \ref{cor:Lambda-minimal} and a ``conformal invariance'' of the $Q$-energy form, stated in Proposition \ref{prop:Conf-Inv} and Corollary \ref{cor:nu-minimal}.
Our strategy is to find a contradiction in \eqref{Intro:eq-2} using the heuristic principle mentioned above.

Then consider $h := \text{d}(\Lambda \otimes \mu_{\mathbb{S}})/\text{d}\nu$ and its density point $x$ where $h(x) > 0$. We proceed by blowing-up at such point, and obtain a metric space $(\mathbb{S}^2,\tilde{\rho})$, quasisymmetric to $(\mathbb{S}^2,\rho)$.
We also show that, if $\nu_n$ are the $Q$-Hausdorff measures in the blow-up sequence $(\mathbb{S}^2,\rho_n)$, then a subsequence of $\nu_n$ converges weakly to measure $\tilde{\nu}$ satisfying the properties
\begin{equation}\label{Intro:eq-3}
    \tilde{\nu}(B_{\tilde{\rho}}(x,r)) \sim r^Q \text{ and } \tilde{\nu} \ll \tilde{\Lambda} \otimes \mu_{\mathbb{S}}.
\end{equation}
for some Radon measure $\tilde{\Lambda}$.
The first condition in \eqref{Intro:eq-3} is obtained by showing that $\nu_n(B_{\rho_n}(x,r))$ is comparable to $r^Q$ with constant independent of $n$. The second condition in \eqref{Intro:eq-3} is somewhat delicate because weak limits are generally unsuitable for studying absolute continuity.
Nevertheless, here we can exploit a fact that
\[
    \textrm{m}_{1,n} \rightharpoonup \tilde{\textrm{m}}_1 \text{ and } \textrm{m}_{2,n} \rightharpoonup \tilde{\textrm{m}}_2 \text{ implies } \textrm{m}_{1,n} \otimes \textrm{m}_{2,n} \rightharpoonup \textrm{m}_{1} \otimes \textrm{m}_{2},
\]
which always holds under mild topological assumptions.
Hence, the second conditions in \eqref{Intro:eq-3} follows because we blow-up at a density point of $h$.

Now, \eqref{Intro:eq-3} implies that $(\mathbb{S}^2,\tilde{\rho})$ is also an attaining metric space, and that its $Q$-Hausdorff measure is comparable to $\tilde{\nu}$. But since \eqref{Intro:eq-2} is valid for \emph{any} attaining metric space, we must have that
\[
    \mu_{\mathbb{S}} \otimes \Lambda \ll \Lambda \otimes \mu_{\mathbb{S}} + \mu_{\mathbb{S}} \otimes \Lambda \ll \tilde{\nu} \ll \tilde{\Lambda} \otimes \mu_{\mathbb{S}}.
\]

Up to this point, each step in the proof would work if, for instance, $\mathbb{S}$ is replaced by the unit interval $[0,1]$, for which the conclusion of the theorem is obviously false.
But now, we use a certain singularity property carried by energy measures on many fractal spaces, including the Sierpi\'nski carpet.
More specifically, if $\Lambda$ is a minimal energy dominant measure, then $\Lambda \perp \mu_\mathbb{S}$.
See Section \ref{sec:Singular} for more detailed discussion about the singularity.
This together with absolute continuity in the previous display implies $\Lambda = 0$, which clearly contradicts \eqref{Intro:eq-2}. The attaining structure therefore cannot exist, and this completes the proof of Theorem \ref{main:Carpet}.

\subsection{Further examples}
As discussed above, the proof of Theorem \ref{main:Carpet} relies on two main techniques: blow-ups and energy measures.
The existence of blow-ups are ensured by rather mild self-similarity; essentially all self-similar sets in Euclidean spaces admit them.
The existence of suitable energy measures is far more non-trivial. Nevertheless, there is by now a substantial body of literature in analysis on fractals that provides such examples. See, for instance, \cite{shimizu,kigami,p-Gasket,KigamiOta,AEBS25}, as well as the recent survey \cite{energyform-survey}.
Below, we provide a list of examples of metric spaces $X$ for which Theorem \ref{main:Carpet} holds if we replace $\mathbb{S}$ with $X$.
A precise generalized statement is provided in Theorem \ref{thm:non-attainment}, and see Section \ref{subsec:Remarks} for more detailed discussion.

\begin{enumerate}
    \item $X$ is the Menger sponge, or more generally any generalized Sierpi\'nski carpet in the sense of \cite{shimizu}, where also the relevant energy form is constructed.
    \item $X$ is the Laakso diamond space, where the energy is constructed in \cite{AEBS25}. More generally, if $X$ is any Laakso type space in the sense of \cite{AEBS25}, assuming that for every $p \in (1,\infty)$ its $p$-walk dimension is strictly larger than $p$.  
    \item $X$ is the Sierpi\'nski gasket, with the energy constructed in \cite{kigami}.
\end{enumerate}

Finally, we note that the construction of the energy measures on $\mathbb{S}$ discussed above generalizes to every $p \in (1,\infty)$.
For our application to the non-attainment, however, it is essential that we consider the $Q$-energy where $Q$ is the conformal dimension of $\mathbb{S}^2$.
For instance, the ``conformal invariance'' of the $Q$-energy, stated in Proposition \ref{prop:Conf-Inv} and Corollary \ref{cor:nu-minimal}, should be false when $Q$ is some other value.
This, nevertheless, does not mean that $p$-energies for other values of $p$ are not interesting.
In fact, our method for the non-attainment demands a study of the $p$-energy on the factor $\mathbb{S}$, which is not conformally invariant in the sense above; $Q$ is the conformal dimension of the product $\mathbb{S}^2$ but not of the factor $\mathbb{S}$.
The results of the work, therefore, indicates that the aforementioned studies of energy measures, for general $p \in (1,\infty)$, should be applicable in the future studies on conformal dimension.

\subsection*{Organization of the paper}
Section \ref{sec:preli} covers the necessary material on metric spaces and conformal dimension. In Section \ref{sec:Dirichlet}, we introduce the general framework of $p$-Dirichlet spaces that were developed in \cite{EID2025,ResistanceConjecture}.

In Section \ref{Sec:Confdim}, we establish a connection between the attainment problem with properties of $p$-Dirichlet spaces.
Then in Section \ref{sec:Singular}, we provide the details about the singularity of energy measures.

Finally, in Section \ref{sec:NonAttain}, we prove the main theorem about the non-attainment, Theorem \ref{main:Carpet}, and its generalization Theorem \ref{thm:non-attainment}. A detailed discussion about the hypotheses is provided in the very end of the paper.

\section{Preliminary}\label{sec:preli}

\subsection{Metric spaces}
We recall some standard terminology from metric geometry, and refer to \cite{He,bbi} for further details.
Let $(X,d)$ be a complete metric space. If $A\subseteq X$ is a non-empty subset, write $\diam(A):=\sup_{x,y \in A} d(x,y)$. 
For $Q > 0$, the  \emph{$Q$-dimensional Hausdorff measure} of a subset $A \subseteq X$ is
\[
    \mathcal{H}^Q(A):=\lim_{\delta\to 0}\mathcal{H}_\delta^Q(A).
\]
Here $\mathcal{H}^Q_\delta$ is the \emph{$Q$-dimensional Hausdorff $\delta$-content}, given by
\[
    \mathcal{H}^Q_\delta(A):=\inf\bigg\{\sum_{i=1}^\infty \diam(A_i)^Q\; : \;A\subseteq\bigcup_{i=1}^\infty A_i,\;\diam(A_i)\leq\delta \bigg\}.
\]
The \emph{Hausdorff dimension} of \((X,d)\) is the value
    \[
        \dim_{\mathrm{H}}(X,d):=\inf \left\{ Q>0 : \mathcal{H}^Q(X)=0\right\}.
    \] 
We say that $(X,d)$ is \emph{$Q$-Ahlfors regular} if there is $C \geq 1$ such that
\begin{equation}\label{eq:AR}
    C^{-1}r^Q \leq \mathcal{H}^Q(B(x,r)) \leq Cr^Q \text{ for all } x \in X \text{ and } r \in (0,2\diam(X)].
\end{equation}
Here $B(x,r)$ denotes the open ball
\[
    B(x,r) := \{ y \in X : d(x,y) < r \}.
\]
An inflation of a ball $B=B(x,r)$ by a constant $C>0$ is often denoted $CB:=B(x,Cr)$. The Hausdorff dimension of a $Q$-Ahlfors regular metric space is equal to $Q$. We refer to \cite[pages 61-62]{He} for details.

\subsection{Curves}
Let $(X,d)$ be a compact metric space.
A \emph{curve} in $X$ is a continuous function $\gamma : [0,1] \to X$.
Its \emph{length} is the value
\[
    \len(\gamma) := \sup \sum_{k = 1}^{N - 1} d(\gamma(t_k),\gamma(t_{k+1}))
\]
where the supremum is taken over all increasing sequences $\{ t_k \}_{k=1}^N \subseteq [0,1]$.
We say that $(X,d)$ is \emph{geodesic} if for every $x,y \in X$ there is a curve $\gamma$ such that $\gamma(0) = x$, $\gamma(1) = y$ and $\len(\gamma) = d(x,y)$.

When convenient, we regard a curve $\gamma$ in $X$ as the subset of $X$ by identifying it as the image $\gamma([0,1])$.
We say that $(X,d)$ is of \emph{bounded turning} if there is $K \geq 1$ satisfying the following. For every $x,y \in X$ there is a curve $\gamma$ such that $x,y \in \gamma$ and $\diam(\gamma) \leq K d(x,y)$.

\subsection{Conformal dimension}\label{subsec:Confdim}
We recall the necessary terminology related to the conformal dimension, and see Mackay--Tyson for further literature \cite{MT}.
Let $d$ and $\rho$ be two metrics on $X$. We say that $d$ is \emph{quasisymmetric} to $\rho$ if there is a homeomorphism $\eta\colon[0,\infty)\to[0,\infty)$ such that, for all triples $
x,y,z\in X$ with $x \neq z$,
    \[
    \frac{d(x,y)}{d(x,z)}\leq\eta\bigg(\frac{\rho(x,y)}{\rho(x,z)}\bigg).
    \]
Quasisymmetry is an equivalence relation of metrics by \cite[Chapter 10]{He}.
We sometimes say that $d$ is $\eta$-quasisymmetric to $\rho$ when the function $\eta$ is relevant.
The \emph{Ahlfors regular conformal dimension} of a metric space $(X,d)$ is the infimum of exponents $Q > 0$ such that there is a metric $\rho$ on $X$, quasisymmetric to $d$, which is $Q$-Ahlfors regular.
For simplicity, we call this value the \emph{conformal dimension} of $(X,d)$.
We say that a metric space $(X,d)$ \emph{attains} its conformal dimension if there is a metric $\rho$, quasisymmetric to $d$, such that $\rho$ is $Q$-Ahlfors regular and $Q$ is the conformal dimension of $(X,d)$.

The conformal dimension is often studied using the discrete modulus. We review the necessary tools for this approach.
Let $(X,d)$ be a compact metric space, $\mathcal{U}$ be an open covering of $X$, and $\mathcal{C}$ be a family of curves in $X$.
We say that a function $\rho : \mathcal{U} \to [0,\infty)$ is \emph{$\mathcal{C}$-admissible} if
\[
    \sum_{ \substack{U \in \mathcal{U}\\
    U \cap \gamma \neq \emptyset }} \rho(U) \geq 1
\]
for all $\gamma \in \mathcal{C}$.
For $p \in [1,\infty)$, we define the \emph{discrete $p$-modulus} as the value
\[
    \Mod_p(\mathcal{C},\mathcal{U}) := \min_{\rho} \sum_{U \in \mathcal{U}} \rho(U)^p
\]
where the minimum is taken over all $\mathcal{C}$-admissible $\rho : \mathcal{U} \to [0,\infty)$.
When $p > 1$ the minimizer $\rho$ exists and is unique by standard convex optimization theory.

\subsection{Loewner spaces}
We recall some important concepts from analysis on metric space, and see the standard references \cite{bjorn2011nonlinear,HKST,He}.
Let $(X,d)$ be a complete metric space and $\mu$ be a Radon measure on $(X,d)$. We say that $\mu$ is \emph{doubling} if there is $D \geq 1$ such that
\[
    0 < \mu(B(x,2r)) \leq D\mu(B(x,r)) < \infty \text{ for all } x \in X \text{ and } r > 0.
\]
If $f \in L^1(X,\mu)$ and $A \subseteq X$ is a Borel set with $\mu(A) \in (0,\infty)$, we define the \emph{integral average}
\[
    f_A := \kint_A f\, d\mu := \frac{1}{\mu(A)} \int f\, d\mu.
\]
For $p \in [1,\infty)$, we say that $(X,d,\mu)$ satisfies the \emph{$(1,p)$-Poincar\'e inequality} if there are $C,\sigma \geq 1$ such that for all Lipschitz functions $f : X \to \R$ it holds that
\[
    \kint_{B(x,r)} \abs{f - f_{B(x,r)}} \, d\mu \leq C r \left( \kint_{B(x,\sigma r)} \Lip(f)^p \, d\mu \right)^{\frac{1}{p}}.
\]
Here $\Lip(f)$ denotes the \emph{pointwise Lipschitz constant}
\[
    \Lip(f)(x) := \limsup_{r \to 0^+} \sup_{y \in B(x,r)} \frac{\abs{f(x) - f(y)}}{d(x,y)}.
\]
Finally, given $Q > 1$, we say that a metric space $(X,d)$ is a $Q$-\emph{Loewner space} if it is $Q$-Ahlfors regular and $(X,d,\mathcal{H}^Q)$ satisfies the $(1,Q)$-Poincar\'e inequality.

\section{Dirichlet spaces}\label{sec:Dirichlet}
The main techniques used in the proof of non-attainment require a sufficiently general setting in which a first-order calculus on metric spaces can be developed.
It should, at least, encompass the Sobolev spaces on the Sierp\'nski carpet constructed in \cite{MuruganShimizu,shimizu,kigami}, and the analogous objects on the product spaces.
The more standard approaches in analysis on metric spaces, popularized by e.g. Cheeger \cite{Cheeger}, Haj{\l}asz \cite{Hajlasz} and Shanmugalingam \cite{Shanmun}, and also Ambrisio--Gigli--Savare \cite{AmbrosioGigliSavare14}, do not appear to provide a sufficiently flexible setting for our purposes.
Instead, we work with $p$-Dirichlet spaces introduced by the second author and Murugan \cite{EID2025}.

\subsection{Definition}\label{d:dir-space}
        Let $p \in (1,\infty)$. We call $(X,d,\mu, \mathcal{E}, \mathcal{F}, \Gamma)$ a \emph{$p$-Dirichlet space} if the conditions (1)-(7) below hold.
		\begin{enumerate}  
			\item \textbf{Locally compact space}: $(X,d,\mu)$ is a complete, locally compact metric space equipped with a Radon measure $\mu$.
			\item \textbf{Completeness:} 
            The space $\mathcal{F}$  is a subspace of $L^p(X,\mu)$ and
			$\mathcal{E}: \mathcal{F} \to [0,\infty)$ is a non-negative function such that $\mathcal{F}$ is a Banach space when equipped with the norm $\norm{f}_{\mathcal{F}}=(\norm{f}_{L^p}^p+\mathcal{E}(f))^{1/p}$.
			\item \textbf{Homogeneity:} For all $f\in \mathcal{F}$ there exists a finite non-negative Borel measure $\Gamma \langle f \rangle$ on $X$ such that $\Gamma \langle f \rangle(X)= \mathcal{E}(f)$   and for all $\lambda \in \mathbb{R}$
			\[
			\Gamma\langle \lambda f\rangle = |\lambda|^p \Gamma\langle f\rangle.
			\]
			The measure $\Gamma\langle f \rangle$ associated to $f \in \mathcal{F}$ is called the \emph{energy measure of $f$}.
			\item \textbf{Triangle inequality:} For every $f,g\in \mathcal{F}$ and every Borel set $A\subseteq X$, 
			\begin{equation*} \label{e:def-sublinear}
				\Gamma\langle f+g\rangle(A)^{\frac{1}{p}}\le \Gamma\langle f\rangle(A)^{\frac{1}{p}}+\Gamma\langle g\rangle(A)^{\frac{1}{p}}
			\end{equation*}
			\item \textbf{Lipschitz contractivity:} For all $f\in \mathcal{F}$ and all  $1$-Lipschitz functions $\varphi : \R \to \R$ with $\varphi(0)=0$  we have $\varphi \circ f \in \mathcal{F}$ and $\Gamma\langle \varphi \circ f\rangle \leq \Gamma\langle f\rangle$.
			\item \textbf{Strong locality:} For every $f\in \mathcal{F}$ and an open set $A\subseteq X$, if there is $c \in \R$ such that $(f - c\mathds{1}_A) = 0$ $\mu$-a.e., then $\Gamma\langle f\rangle(A)=0$. 
			\item \textbf{Weak lower semicontinuity:} For every $f\in L^p(X,\mu)$, and for any sequence of functions $\{f_i\}_{i = 1}^\infty$ in $\mathcal{F}$ such that $f_i\to f \in L^p(X,\mu)$ with $\sup_{i\in \mathbb{N}} \mathcal{E}(f_i)<\infty$, then $f\in \mathcal{F}$ and
			\begin{equation*} \label{e:def-lsc}
				\Gamma\langle f \rangle(A)\le \liminf_{i\to \infty} \Gamma\langle f_i \rangle(A)
			\end{equation*}
			for every Borel set $A\subseteq X$. 
		\end{enumerate}

    \subsection{Poincar\'e and capacity}
    We call $\Psi : [0,\infty) \to [0,\infty)$ a \emph{scale function} if there is $C \geq 1$ such that
    \[
      C^{-1}\Psi(r) \leq \Psi(2r) \leq C \Psi(r) \text{ for all } r > 0 \text{ and } \Psi^{-1}(\{0\}) = \{0\}.
    \]
    When convenient, we sometimes write $\Psi(B) = \Psi(r)$ where $B = B(x,r)$.
    Given $p \in (1,\infty)$ we call $(X,d,\mu,\mathcal{E},\mathcal{F},\Gamma)$ a \emph{$(p,\Psi)$-Poincar\'e--Dirichlet space} if it satisfies the conditions (1)-(7) above, and also the following.
    \begin{enumerate}\setcounter{enumi}{7}
        \item \textbf{Volume doubling:} $(X,d,\mu)$ is volume doubling.
        \item \textbf{Poincar\'e inequality:} There are $C,\sigma \geq 1$ such that, for all open balls $B$ and $f\in \cF$, we have
    \begin{equation}\label{eq:PI}
     \int_{B} \abs{f - f_{B}}^p \, \text{d}\mu \leq C \Psi(B) \int_{\sigma B} \, \text{d}\Gamma\Span{f}.
    \end{equation}
        \item \textbf{Upper capacity:} There is $C \geq 1$ such that, for every open ball $B \subseteq X$, there exists $\varphi \in \cF \cap C(X)$ with $\varphi|_B=1$, $\varphi|_{X\setminus 2B}=0$ and
    \[
    \mathcal{E}(\varphi) \leq C\frac{\mu(B)}{\Psi(B)}.
    \]
    \end{enumerate}
    These conditions are abbreviated as $\VD$, $\PI(\Psi)$ and $\cCap(\Psi)$, respectively.
    When $\Psi(r)=r^\beta$ for some $\beta > 0$, we often use the terminology $(p,\beta)$-Poincar\'e--Dirichlet space and write $\PI(\beta), \cCap(\beta)$ for the conditions above.
    
    In \cite{ResistanceConjecture} the scale functions were allowed to depend also on the spatial variable $x \in X$. In the present work, however, it is essential that $\Psi$ is radial. See the proof of Theorem \ref{thm:Z}.

\subsection{Preliminary lemmas}\label{subsec:BasicProp}
The following lemma is proven in \cite[Lemma 4.2]{ResistanceConjecture}.
The assumptions on $\Psi$ are slightly different, but same the proof applies.
\begin{lemma}\label{lemma:POU}
Let $\Psi$ be a scale function and $(X,d,\mu, \mathcal{E}, \mathcal{F}, \Gamma)$ be a $p$-Dirichlet space satisfying $\textup{VD}$ and $\cCap(\Psi)$.
Let $\mathcal{B}$ be a collection of open balls so that it covers $X$ and there is $D \geq 1$ satisfying
\[
    \sum_{B \in \mathcal{B}} \mathds{1}_{2B} \leq D \mathds{1}_X.
\]
Then there is $C \geq 1$, which depends on $\mathcal{B}$ only through the constant $D$, and a partition of unity
$\{ \psi_{B} \}_{B \in \mathcal{B}} \subseteq \mathcal{F} \cap C(X)$, satisfying the following.
    \begin{enumerate}
        \vspace{2pt}
        \item $\sum_{B \in \mathcal{B}} \psi_{B}(x) = 1$ and $0 \leq \psi_{B}(x)\leq 1$ for all $B \in \mathcal{B}$ and $x \in X$.
        \vspace{2pt}
        \item It holds for all $B \in \mathcal{B}$ that $\psi_{B}|_{X \setminus 2B} = 0$.
        \vspace{2pt}
        \item It holds for every $B \in \mathcal{B}$ that
        \[
        \mathcal{E}(\psi_B) \leq C\frac{\mu(B)}{\Psi(B)}.
        \]
    \end{enumerate}
\end{lemma}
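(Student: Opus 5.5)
We construct the partition of unity by normalizing cutoff functions, following the scheme of \cite[Lemma 4.2]{ResistanceConjecture}. For each $B \in \mathcal{B}$, $\cCap(\Psi)$ gives $\varphi_B \in \mathcal{F} \cap C(X)$ with $\varphi_B|_B = 1$, $\varphi_B|_{X \setminus 2B} = 0$ and $\mathcal{E}(\varphi_B) \leq C\mu(B)/\Psi(B)$. By Lipschitz contractivity we may replace $\varphi_B$ by $\min\{\max\{\varphi_B,0\},1\}$, so $0 \le \varphi_B \le 1$ with no loss in energy. Set
\[
\Phi := \sum_{B \in \mathcal{B}} \varphi_B \quad\text{and}\quad \psi_B := \varphi_B/\Phi .
\]
Since $\mathcal{B}$ covers $X$, we have $\Phi \geq 1$. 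The bounded overlap $\sum \mathds{1}_{2B} \le D$ gives $\Phi \le D$, and at each point only at most $D$ summands are nonzero. Local finiteness and continuity then make $\Phi$ continuous, and properties (1) and (2) are immediate.

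The main work is (3), the energy bound for $\psi_B$. We would write $\psi_B = F(\varphi_B, \Phi_B)$, where $\Phi_B := \sum_{B' \in \mathcal{B}_B} \varphi_{B'}$ and $\mathcal{B}_B := \{B' : 2B' \cap 2B \neq \emptyset\}$. On $2B$ this local sum agrees with $\Phi$, and outside $2B$ we have $\psi_B = 0$ anyway. The map $F(s,t) = s/\max(t,1)$ is Lipschitz on $[0,1]\times[0,D]$.

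We then need a chain-rule type estimate. The axioms provide only one-variable Lipschitz contractivity, not a two-variable chain rule. So we would follow \cite{ResistanceConjecture} and express the quotient through lattice and one-variable operations: for instance $1/\max(t,1)$ is a 1-Lipschitz function of $t$ vanishing appropriately after a shift, and the product is handled by $ab = \frac14((a+b)^2-(a-b)^2)$ with bounded functions, using truncated squares that are Lipschitz on bounded ranges. Combining this with the triangle inequality (4) gives
\[
\Gamma\langle\psi_B\rangle(X)^{1/p} \lesssim \Gamma\langle\varphi_B\rangle(X)^{1/p} + \sum_{B' \in \mathcal{B}_B} \Gamma\langle\varphi_{B'}\rangle(X)^{1/p}.
\]
Some care is needed because Lipschitz contractivity requires $\varphi(0)=0$. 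We handle this by applying the maps to $\Phi_B - 1$ and similar shifted quantities, or by localizing via strong locality.

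It remains to compare the neighbouring balls $B' \in \mathcal{B}_B$ to $B$. The count $\#\mathcal{B}_B$ should be bounded in terms of $D$ and the doubling constant. We also need $\mu(B')/\Psi(B') \lesssim \mu(B)/\Psi(B)$, which requires comparable radii. In \cite{ResistanceConjecture} this comes from the covering being a Whitney-type or fixed-scale family. Here, if radii are not comparable, we would instead modify the construction so that only balls of comparable size interact, for example by using $\varphi_{B'}$ only on the region where $\varphi_B > 0$. This comparability of neighbours, which is not explicit in the hypothesis, is the step I expect to be the main obstacle. I would first check whether the intended application uses balls of a single scale, in which case doubling of $\mu$ and $\Psi$ settles it.
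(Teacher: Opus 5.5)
\textbf{There is a genuine gap.} It is the one you flagged, but it is worse than a missing estimate. Under the stated hypothesis (only the pointwise bound $\sum_{B}\mathds{1}_{2B}\le D$, with arbitrary radii), the normalization $\psi_B=\varphi_B/\Phi$ does not satisfy (3), and two of your auxiliary claims are false.

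First, a ball can meet unboundedly many doubled balls of much smaller radius with $D=2$: take radius-$\epsilon$ balls whose centers are spaced $4\epsilon$ apart. So $\#\mathcal{B}_B$ is not controlled by $D$ and the doubling constant.

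Second, tiny redundant balls may accumulate at a point. Then $\Phi$ oscillates between $1$ and $2$ near that point and is not continuous, so "local finiteness" is not available.

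For a concrete failure of (3), take $X=[-\tfrac12,\tfrac12]$ with Lebesgue measure, $p=2$, $\mathcal{E}(f)=\int|f'|^2$ and $\Psi(r)=r^2$. Let $\mathcal{B}=\{B(0,1),\,b\}$ with $b=B(0,\epsilon)$, so $D=2$. Since $B(0,1)=X$, every cutoff from $\mathrm{Cap}(\Psi)$ has $\varphi_{B(0,1)}\equiv 1$. Hence $\psi_{B(0,1)}=1/(1+\varphi_b)$ and
\[
\mathcal{E}(\psi_{B(0,1)})\ \ge\ \tfrac{1}{16}\,\mathcal{E}(\varphi_b)\ \ge\ \tfrac{1}{8\epsilon},
\]
whereas $\mu(B(0,1))/\Psi(1)=1$. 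The lemma's conclusion is trivially true here (take $\psi_{B(0,1)}\equiv 1$, $\psi_b\equiv 0$), so the defect is in your construction. Your suggested repair, using $\varphi_{B'}$ only where $\varphi_B>0$, does not help, because the small ball sits exactly where $\varphi_B=1$.

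\textbf{What closes the gap} is a comparability hypothesis: assume that $2B\cap 2B'\neq\emptyset$ implies $\lambda^{-1}r_B\le r_{B'}\le \lambda r_B$. Then:
\begin{enumerate}
\item Each $B'\in\mathcal{B}_B$ satisfies $B(x_{B'},r_B/\lambda)\subseteq 2B'\cap B(x_B,(3\lambda+2)r_B)$, so VD and the overlap bound give $\#\mathcal{B}_B\le C(D,\lambda)$.
\item VD and the scaling of $\Psi$ give $\mu(B')/\Psi(B')\simeq \mu(B)/\Psi(B)$.
\item Local finiteness gives continuity of $\Phi$.
\end{enumerate}
With these, your quotient, Lipschitz-contraction and Leibniz argument goes through as written.

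This hypothesis covers every use of the lemma in the paper: the single-scale families $\mathcal{U}_\varepsilon$, and the $d$-balls sandwiching the $\rho$-balls $B_\rho(z,r)$, whose radii are locally comparable by quasisymmetry. The paper itself gives no argument beyond citing \cite[Lemma 4.2]{ResistanceConjecture}.

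If you want the statement for genuinely arbitrary radii, normalization is the wrong tool. You would need a construction in which small balls cannot perturb large ones, e.g.\ $\psi_B=\varphi_B\prod_{B'\succ B}(1-\varphi_{B'})$ with balls ordered by decreasing radius. That in turn requires two separate bounds:
\begin{enumerate}
\item a bound on the number of larger neighbours;
\item a bound on $\Gamma\langle\varphi_{B'}\rangle(2B)$ for larger $B'$ by $\mu(B)/\Psi(B)$. This is automatic when $r\mapsto\mu(B(x,r))/\Psi(r)$ is quasi-decreasing, as when $d_{\mathrm H}=\beta$.
\end{enumerate}
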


Let $(X,d,\mu,\cE,\cF,\Gamma)$ be a $p$-Dirichlet space. We say that a Radon measure $\Lambda$ on $(X,d)$ is \emph{energy dominant} if $\Gamma\langle f\rangle \ll \Lambda$ for every $f\in \cF$.
An energy dominant measure $\Lambda$ is \emph{minimal energy dominant} if for every energy dominant $\nu$ we have $\Lambda \ll \nu$.

\begin{proposition}\label{prop:MinEnergyDom}
    Let $(X,d,\mu,\cE,\cF,\Gamma)$ be a $p$-Dirichlet space so that $(X,d)$ is compact. Then there exists a minimal energy dominant measure.
\end{proposition}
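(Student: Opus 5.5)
Since $(X,d)$ is compact, it is separable, and $\mathcal{F}$ is separable as well. Indeed, $L^p(X,\mu)$ is separable for a Radon measure on a compact metric space. Moreover, $\mathcal{F}$ embeds isometrically into $L^p(X,\mu)\times\R$ via $f\mapsto(f,\mathcal{E}(f)^{1/p})$, although this map is not linear. To make the separability argument clean, I would use the following route. The set $S=\{(f,\mathcal{E}(f)^{1/p})\}$ is a subset of the separable metric space $L^p\times\R$, so it has a countable dense subset. The issue is that density in this sense does not directly give absolute continuity of the energy measures. So the plan is a direct measure-theoretic exhaustion that avoids separability subtleties.

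\emph{Key step: exhaustion.} Note that each $\Gamma\langle f\rangle$ is a finite Borel measure on a compact metric space, hence Radon. Consider the family $\mathcal{N}$ of measures of the form
\[
\sum_i 2^{-i}\frac{\Gamma\langle f_i\rangle}{1+\mathcal{E}(f_i)}
\]
over countable sequences $(f_i)\subseteq\mathcal{F}$. Each of these is a finite Borel measure of total mass at most $1$, hence Radon. For such measures $\Lambda$, the support classes of null sets are ordered by absolute continuity. We want $\Lambda$ maximal, which is a Hahn-type exhaustion. Let
\[
s=\sup\{\Lambda(X)':\ldots\}
\]
where the sup is taken over a suitable normalized quantity. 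A cleaner formulation: choose a finite reference measure $m$, for instance a probability measure on $X$ with full support. Define
\[
\alpha=\sup\{m(\{d\Gamma\langle f\rangle/dm>0\})\}
\]
via the Lebesgue decomposition, and more precisely use the sets where the measures are "charged." The standard trick is to take
\[
\alpha=\sup_{\Lambda\in\mathcal{N}}\ \sup\{\,\theta(E): E \text{ a support set of } \Lambda\,\}.
\]
I would instead use the classical lemma that any family of finite measures $\{\nu_j\}$ on a measurable space admits a countable subfamily $(\nu_{j_i})$ such that every $\nu_j\ll\sum_i 2^{-i}\nu_{j_i}/(1+\nu_{j_i}(X))$. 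The proof of that lemma is where the real work lies. One maximizes, over countable subfamilies, the total mass of a fixed reference... Since there is no $\sigma$-finite dominating measure a priori, I would prove it using the metric structure instead.

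\emph{Main step: countable dense family.} Take a countable set $\{f_i\}\subseteq\mathcal{F}$ that is dense in $S$ for the product topology, and set $\Lambda=\sum_i 2^{-i}\Gamma\langle f_i\rangle/(1+\mathcal{E}(f_i))$. Now fix $f\in\mathcal{F}$ and a Borel set $A$ with $\Lambda(A)=0$. Pick $f_{i_k}$ with $f_{i_k}\to f$ in $L^p$ and $\mathcal{E}(f_{i_k})\to\mathcal{E}(f)$. We want to show $\Gamma\langle f\rangle(A)=0$. Weak lower semicontinuity only gives $\Gamma\langle f\rangle(X\setminus A)\le\liminf\Gamma\langle f_{i_k}\rangle(X\setminus A)$. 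Since $\Gamma\langle f_{i_k}\rangle(X\setminus A)=\mathcal{E}(f_{i_k})\to\mathcal{E}(f)$, this yields $\Gamma\langle f\rangle(X\setminus A)\le\mathcal{E}(f)$, which is trivial. I therefore expect this to be the main obstacle.

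The fix I would try is to use density in norm, which requires separability of the Banach space $\mathcal{F}$. Granting this, choose $f_{i_k}\to f$ in $\|\cdot\|_{\mathcal{F}}$ with $\mathcal{E}(f-f_{i_k})\to0$. The triangle inequality (4) on $A$ then gives
\[
\Gamma\langle f\rangle(A)^{1/p}\le\Gamma\langle f_{i_k}\rangle(A)^{1/p}+\mathcal{E}(f-f_{i_k})^{1/p}=\mathcal{E}(f-f_{i_k})^{1/p}\to0.
\]
So everything reduces to the separability of $(\mathcal{F},\|\cdot\|_{\mathcal{F}})$. If it is not available in general, the fallback is the abstract exhaustion argument. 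Consider the Borel sets $A$ that are $\Gamma\langle g\rangle$-null for every $g$ in a countable family, and take a countable family that maximizes $\sup_f\Gamma\langle f\rangle(X)/(1+\mathcal{E}(f))$ restricted to the $\Lambda$-null part. Iterating countably many times, as in the proof that a $\sigma$-finite family admits an equivalent measure, one shows that no $f$ charges a $\Lambda$-null set. This yields an energy dominant $\Lambda$ which is a countable combination of energy measures. Finally, minimality holds because any energy dominant $\nu$ satisfies $\Gamma\langle f_i\rangle\ll\nu$ for all $i$, hence $\Lambda\ll\nu$.
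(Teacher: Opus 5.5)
There is a genuine gap, and it comes from misreading axiom (7). In this paper, weak lower semicontinuity asserts $\Gamma\langle f\rangle(A)\le\liminf_i\Gamma\langle f_i\rangle(A)$ for \emph{every Borel set} $A$, not only for open sets as in the Portmanteau theorem. It applies whenever $f_i\to f$ in $L^p(X,\mu)$ with $\sup_i\mathcal{E}(f_i)<\infty$. You should apply it to the $\Lambda$-null set $A$ itself, not to $X\setminus A$.

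Your ``main step'' construction already supplies everything needed. If $(f_{i_k},\mathcal{E}(f_{i_k})^{1/p})\to(f,\mathcal{E}(f)^{1/p})$, then $f_{i_k}\to f$ in $L^p$ and $\sup_k\mathcal{E}(f_{i_k})<\infty$. Hence
\[
\Gamma\langle f\rangle(A)\le\liminf_{k\to\infty}\Gamma\langle f_{i_k}\rangle(A)=0,
\]
since each $\Gamma\langle f_i\rangle\ll\Lambda$. This is exactly the paper's proof. The paper takes countable $L^p$-dense subsets of the energy balls $\{\|f\|_{\mathcal{F}}\le N\}$ with weights $2^{-i-N}N^{-1}$, which is equivalent to your graph-dense family, and your minimality argument coincides with the paper's. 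By discarding this step, you left the proof without a valid conclusion.

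Neither of the substitutes you propose closes the gap.
\begin{itemize}
\item Norm-separability of $(\mathcal{F},\|\cdot\|_{\mathcal{F}})$ is asserted in your first line and later ``granted'', but it does not follow from axioms (1)--(7). The usual route to it goes through reflexivity or uniform convexity (Clarkson-type inequalities), which this framework deliberately does not assume. Incidentally, $f\mapsto(f,\mathcal{E}(f)^{1/p})$ is only $1$-Lipschitz, not an isometry.
\item The fallback exhaustion argument is the Halmos--Savage lemma, which requires a $\sigma$-finite measure dominating the whole family. Without that hypothesis, the ``classical lemma'' you quote is false, for example for $\{\delta_x\}_{x\in[0,1]}$. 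Compactness or the metric structure does not repair it.
\end{itemize}
What rules out such behaviour for energy measures is precisely axiom (7) on Borel sets, combined with the separability of $L^p(X,\mu)$.
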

\begin{proof}
    Let $\cF(N) := \{f\in \cF : \|f\|_{\cF}\leq N\}$. By the separability of $L^p(X,\mu)$, which follows from the compactness of $X$, we can fix a countable set $S_N\subseteq \cF(N)$ that is dense in $\norm{\,\cdot \,}_{L^p(X,\mu)}$-norm.
    We enumerate the elements of $S_N$ as $\{f_{i,N}\}_{i\in \N}$. We also define $S=\bigcup_{N\in \N} S_N$ and
    \[
    \Lambda := \sum_{N\in \N} \sum_{i\in \N} \frac{1}{2^{i+N} N}\Gamma\langle f_{i,N}\rangle.
    \]
    We argue that $\Lambda$ is minimal energy dominant. Note that $\Lambda$ is a finite Radon measure by construction.
    Because $\Lambda$ is a countable sum of energy measures, $\Lambda \ll \nu$ when $\nu$ is any energy dominant measure. This proves the minimality of $\Lambda$.

    Let $f \in \mathcal{F}$. Then $f \in \mathcal{F}(N)$ for $N := \ceil{\norm{f}_{\mathcal{F}}}$, meaning there is $\{ h_i \}_{i = 1}^\infty \subseteq S_N$ such that $h_i \to f$ in $L^p(X,\mu)$. By the weak lower-semicontinuity, it holds for every Borel set $A \subseteq X$ that
    \[
        \Gamma\Span{f}(A) \leq \liminf_{i \to \infty} \Gamma\Span{h_i}(A).
    \]
    By construction, $\Gamma\Span{h_i} \ll \Lambda$ for each $i \in \N$. This and the previous display implies $\Gamma\Span{f} \ll \Lambda$.
\end{proof}

\subsection{Cartesian product of Dirichlet spaces}
If $(X,d_X,\mu_X)$ and $(X,d_Y,\mu_Y)$ are metric measure spaces, we denote the $\ell^\infty$-product metric by $d_X \times d_Y$, and the product measure by $\mu_X \otimes \mu_Y$.
We use the usual notation for the slices
\[
    \Omega^y := \{ x \in X : (x,y) \in \Omega \} \text{ and } \Omega_x := \{ y \in Y : (x,y) \in \Omega \},
\]
and also denote the sections
\[
    u(\,\cdot\,,y)(x) := u(x,y) \text{ and } u(x,\,\cdot\,)(y) := u(x,y).
\]

\begin{definition}\label{def:Product}
    Let $(X,d_X,\mu_X,\mathcal{E}_X,\mathcal{F}_X,\Gamma_X)$ and $(Y,d_Y,\mu_Y,\mathcal{E}_Y,\mathcal{F}_Y,\Gamma_Y)$ be two $p$-Dirichlet spaces. Their \emph{Cartesian product} is the collection
    \[
        (X \times Y,d_X \times d_Y,\mu_X \otimes\mu_Y, \mathcal{E},\mathcal{F},\Gamma)
    \]
    where the last three objects are given as follows.
    The domain $\mathcal{F}$ consists of those $u \in L^p(X \times Y, \mu_X \otimes \mu_Y)$ satisfying 
    \[
        u(\,\cdot\,,y) \in \mathcal{F}_X \text{ for $\mu_Y$-a.e. $y \in Y$} \text{ and }  u(x,\,\cdot\,) \in \mathcal{F}_Y \text{ for $\mu_X$-a.e. $x \in X$},
    \]
    and also
    \[
        \mathcal{E}(f) := \int_{X}\mathcal{E}_Y(u(x,\,\cdot \,))\, \text{d}\mu_X(x) + \int_{Y}\mathcal{E}_X(u(\,\cdot \,,y))\, \text{d}\mu_Y(y) < \infty.
    \]
    The measures $\Gamma\Span{\,\cdot\,}$ are given by
    \[
        \Gamma\Span{u}(\Omega) := \int_{X}\Gamma_Y\Span{u(x,\, \cdot\,)}(\Omega_x)\, \text{d}\mu_X(x) + \int_{Y}\Gamma_X\Span{u(\, \cdot\,,y)}(\Omega^y)\, \text{d}\mu_Y(y).
    \]
\end{definition}

In writing these definitions, it is not immediately obvious that the integrands are measurable. This will be shown in Lemma \ref{lemma:AbstractNonsense} below.
The objective of the section is to prove the following theorem.

\begin{theorem}\label{thm:Z}
Fix $p \in (1,\infty)$ and a scale function $\Psi$.
Let $(X,d_X,\mu_X,\mathcal{E}_X,\mathcal{F}_X,\Gamma_X)$ and  $(Y,d_Y,\mu_Y,\mathcal{E}_Y,\mathcal{F}_Y,\Gamma_Y)$ be $(p,\Psi)$-Poincar\'e--Dirichlet spaces.
Then their Cartesian product is also a $(p,\Psi)$-Poincar\'e--Dirichlet space.
\end{theorem}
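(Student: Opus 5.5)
The plan is to verify the ten conditions of a $(p,\Psi)$-Poincar\'e--Dirichlet space for the product one at a time. Throughout, Fubini's theorem and the measurability statements of Lemma \ref{lemma:AbstractNonsense} justify all slice integrals. One structural fact is used repeatedly: balls in $d_X\times d_Y$ are products, $B((x,y),r)=B_X(x,r)\times B_Y(y,r)$.

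\textbf{Axioms (1)--(7).}
\begin{enumerate}
\item[(1)] Local compactness and completeness of the product are standard.
\item[(3)] Homogeneity is immediate from the slice-wise homogeneity of $\Gamma_X$ and $\Gamma_Y$. Since $\Gamma_Y\Span{u(x,\cdot)}(Y)=\mathcal{E}_Y(u(x,\cdot))$, integrating gives $\Gamma\Span{u}(X\times Y)=\mathcal{E}(u)$.
\item[(4)] For the triangle inequality, fix a Borel set $A$. Write $a(x)=\Gamma_Y\Span{u(x,\cdot)}(A_x)^{1/p}$ and $b(x)=\Gamma_Y\Span{v(x,\cdot)}(A_x)^{1/p}$, and define the analogous functions $a',b'$ of $y$ for the $X$-slices. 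The slice inequality gives $\Gamma_Y\Span{(u+v)(x,\cdot)}(A_x)^{1/p}\le a(x)+b(x)$, and similarly in $y$. Applying Minkowski's inequality in $L^p$ of the disjoint union $(X,\mu_X)\sqcup(Y,\mu_Y)$ yields $\Gamma\Span{u+v}(A)^{1/p}\le\Gamma\Span{u}(A)^{1/p}+\Gamma\Span{v}(A)^{1/p}$.
\item[(5)--(6)] Lipschitz contractivity and strong locality hold slice by slice. For locality, note that slices of an open set are open, and that $u=c$ a.e.\ on $A$ implies the same for a.e.\ slice.
\item[(7)] Let $u_i\to u$ in $L^p$ with bounded energies. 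Pass to a subsequence with $\sum_i\|u_i-u\|_{L^p}^p<\infty$; then for a.e.\ $x$ the slices satisfy $u_i(x,\cdot)\to u(x,\cdot)$ in $L^p(\mu_Y)$. By Fatou, $\liminf_i\mathcal{E}_Y(u_i(x,\cdot))<\infty$ for a.e.\ $x$. For each such $x$, choose a further, $x$-dependent subsequence along which the energies stay bounded and realize the liminf of $\Gamma_Y\Span{u_i(x,\cdot)}(A_x)$. Weak lower semicontinuity in $Y$ then gives $u(x,\cdot)\in\mathcal{F}_Y$ and the slice-wise bound. Fatou's lemma integrates this to $\Gamma\Span{u}(A)\le\liminf_i\Gamma\Span{u_i}(A)$. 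We must reduce to an initial subsequence realizing the liminf for the fixed set $A$; since $A$ is arbitrary, this is fine.
\item[(2)] Completeness follows from (7): a Cauchy sequence in $\|\cdot\|_{\mathcal{F}}$ converges in $L^p$ with bounded energy. Applying (7) to the differences $u_i-u_j$ shows that the energy of $u_i-u$ tends to zero.
\end{enumerate}

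\textbf{VD and $\cCap(\Psi)$.} Volume doubling follows from the product structure of balls, since $\mu(B)=\mu_X(B_X)\mu_Y(B_Y)$. For the capacity condition, take the cutoffs $\varphi_X,\varphi_Y$ for $B_X,B_Y$ and set $\varphi(x,y)=\varphi_X(x)\varphi_Y(y)$. This vanishes outside $2B_X\times 2B_Y=2B$ and equals $1$ on $B$. Each slice is a scalar multiple of a cutoff, so homogeneity gives
\[
\mathcal{E}(\varphi)\le \mu_X(2B_X)\,C\tfrac{\mu_Y(B_Y)}{\Psi(r)}+\mu_Y(2B_Y)\,C\tfrac{\mu_X(B_X)}{\Psi(r)}\lesssim \tfrac{\mu(B)}{\Psi(r)}.
\]
Here we use VD and the fact that $\Psi$ is the same radial function for both factors and both balls have the same radius $r$.

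\textbf{Poincar\'e inequality (main obstacle).} I would use the pairwise form $\int_B|f-f_B|^p\,d\mu\approx\frac{1}{\mu(B)}\int_B\int_B|f(z)-f(z')|^p$. For $z=(x,y)$ and $z'=(x',y')$, split through the intermediate point $(x',y)$:
\[
|u(x,y)-u(x',y')|\le|u(x,y)-u(x',y)|+|u(x',y)-u(x',y')|.
\]
The first term is controlled by applying PI in $X$ to the slice $u(\cdot,y)$ on $B_X$, which gives $\Psi(r)\,\Gamma_X\Span{u(\cdot,y)}(\sigma B_X)$, and then integrating over $y\in B_Y$. The second term is handled symmetrically. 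Together they bound the left side by $C\Psi(r)\,\Gamma\Span{u}(\sigma B_X\times\sigma B_Y)$, since $(\sigma B)_x\supseteq\sigma B_Y$ for $x\in\sigma B_X$, and similarly for the $X$-slices.

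This step requires the slices of $u\in\mathcal{F}$ to be admissible in the factor PI, which holds a.e.\ by definition, and it requires the radial, common scale $\Psi$. With a point-dependent or factor-dependent scale, the two terms would not combine into a single $\Psi(B)$. I therefore expect no convexity argument to be needed. The delicate points are only the measurability of $y\mapsto\Gamma_X\Span{u(\cdot,y)}(\sigma B_X)$, supplied by Lemma \ref{lemma:AbstractNonsense}, and the bookkeeping of constants through VD.
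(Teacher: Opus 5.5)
Your route is the paper's. You verify the axioms slice by slice, use products of cutoffs for $\cCap(\Psi)$, and prove $\PI(\Psi)$ by splitting through $(x',y)$, which is exactly the tensorization argument the paper delegates to \cite[Proof of Theorem 3]{BBtensor}. Those parts are fine; for the capacity bound, first truncate the factor cutoffs to $[0,1]$ via Lipschitz contractivity so that $|\varphi_X|^p\le\mathds{1}_{2B_X}$.

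The genuine gap is in (7). Fatou only gives $\liminf_i\mathcal{E}_Y(u_i(x,\cdot))<\infty$. Nothing in $\sup_i\mathcal{E}(u_i)<\infty$ forces the indices where the slice energies stay bounded to coincide with the indices where $\Gamma_Y\Span{u_i(x,\cdot)}(A_x)$ approaches its liminf. For example, you could have bounded energy but large $A_x$-energy on even indices, and vanishing $A_x$-energy with exploding total energy on odd indices. Along a bounded-energy subsequence, weak lower semicontinuity in $Y$ only bounds $\Gamma_Y\Span{u(x,\cdot)}(A_x)$ by the liminf along that subsequence, which can be strictly larger. For non-open Borel sets the inequality really does need the energy bound. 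On $[0,1]$ with $\Gamma\Span{f}=|f'|^p\,\mathrm{d}t$ and a fat Cantor set $A$, there are Lipschitz $f_i\to t$ uniformly with $f_i'=0$ a.e.\ on $A$, so $\Gamma\Span{f_i}(A)=0<|A|$. The paper only writes out completeness, where the liminf is of total slice energies and this issue does not arise, and calls weak lower semicontinuity ``similar''.

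The fix is a penalization. For fixed $\delta>0$ set $F_i(x):=\Gamma_Y\Span{u_i(x,\cdot)}(A_x)+\delta\,\mathcal{E}_Y(u_i(x,\cdot))$. If $\liminf_iF_i(x)<\infty$, a subsequence realizing it has slice energies at most $F_{i_k}(x)/\delta$. Weak lower semicontinuity in $Y$ then gives $u(x,\cdot)\in\mathcal{F}_Y$ and $\Gamma_Y\Span{u(x,\cdot)}(A_x)\le\liminf_k\Gamma_Y\Span{u_{i_k}(x,\cdot)}(A_x)\le\liminf_iF_i(x)$. Do the same in the other variable and integrate with Fatou to get
\[
\Gamma\Span{u}(A)\le\liminf_{i\to\infty}\Gamma\Span{u_i}(A)+\delta\sup_{i}\mathcal{E}(u_i).
\]
Letting $\delta\downarrow0$ yields (7). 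The a.e.\ finiteness of $\liminf_iF_i$, which follows from Fatou, also gives $u\in\mathcal{F}$. After that, your derivation of (2) from (7) goes through unchanged.
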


The most technical part of Theorem \ref{thm:Z} is the following measurability result.

\begin{lemma}\label{lemma:AbstractNonsense}
Let $(Z,d,\mu,\mathcal{E},\mathcal{F},\Gamma)$ be the Cartesian product in Theorem \ref{thm:Z}.
Then, for every $u\in L^p(Z,\mu)$ and a Borel set $\Omega \subseteq Z$, the function
\[
y \mapsto
\begin{cases}
    \Gamma_X\langle u(\,\cdot\,,y)\rangle(\Omega^y) & \text{ if } u(\,\cdot\,,y) \in \mathcal{F}_X, \\
    \infty & \text{ otherwise},
\end{cases}
\]
is $\mu_Y$-measurable. Similarly,
\[
x \mapsto
\begin{cases}
    \Gamma_Y\langle u(x,\,\cdot\,)\rangle(\Omega_x) & \text{ if } u(x,\,\cdot\,) \in \mathcal{F}_Y, \\
    \infty & \text{ otherwise,}
\end{cases}
\]
is $\mu_X$-measurable.
\end{lemma}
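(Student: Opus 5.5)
The approach is to reduce the measurability claim to the lower semicontinuity of energy measures (axiom (7)) together with a monotone class argument. Both $(X,d_X)$ and $(Y,d_Y)$ are separable (the product setting implicitly uses $\sigma$-compactness), so $L^p(X,\mu_X)$ is separable and $\mathcal{F}_X$ sits inside it.

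\textbf{Step 1: a lower semicontinuous extension.} For a Borel set $A\subseteq X$, set
\[
G_A(f) := \Gamma_X\Span{f}(A) \text{ for } f\in\mathcal{F}_X, \qquad G_A(f) := \infty \text{ for } f\in L^p(X,\mu_X)\setminus\mathcal{F}_X.
\]
For open $A$ (and for $A=X$) I expect $G_A$ to be Borel on $L^p$. The plan is to write it as the limit
\[
G_A(f)=\lim_{N\to\infty}\Big(\inf\big\{\liminf_i \Gamma_X\Span{f_i}(A) : f_i\to f,\ \mathcal{E}(f_i)\le N\big\}\Big),
\]
or more simply to prove directly that $f\mapsto \Gamma_X\Span{f}(A)+\infty\cdot\mathds 1_{\mathcal{F}_X^c}$ is lower semicontinuous. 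The direct route goes as follows. Suppose $f_i\to f$ in $L^p$ with $\liminf_i G_A(f_i)<\infty$. Lower semicontinuity at $A=X$ needs a bound on $\mathcal{E}(f_i)$, and the $L^p$ norm is bounded automatically. Thus $G_X$ is lower semicontinuous by (7), so the sets $\{\mathcal{E}\le N\}$ are closed in $L^p$, and $\mathcal{F}_X$ is a countable union of closed sets, hence Borel. On each closed set $\{\mathcal{E}\le N\}$, the map $G_A$ is lower semicontinuous by (7), hence Borel; therefore $G_A$ is Borel on all of $L^p$.

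\textbf{Step 2: extending from open sets to all Borel sets.} Consider the class $\mathcal{M}$ of Borel sets $A$ for which $G_A$ is Borel measurable on each $\{\mathcal{E}\le N\}$. This class contains the open sets, which form a $\pi$-system. It is closed under proper differences, since $\Gamma\Span{f}(B\setminus A)=\Gamma\Span{f}(B)-\Gamma\Span{f}(A)$ with finite values, and under increasing unions, by continuity of measure. By Dynkin's $\pi$-$\lambda$ theorem, $\mathcal{M}$ contains all Borel sets.

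\textbf{Step 3: the parametrized version.} The statement requires $\Omega^y$ to depend on $y$, so I will apply the same $\pi$-$\lambda$ argument to the map $(y,f)\mapsto G_{\Omega^y}(f)$ on $Y\times L^p$, with $\Omega$ ranging over Borel subsets of $X\times Y$. For $\Omega=U\times V$ a product of open sets, this map equals $\mathds 1_V(y)\,G_U(f)$, which is jointly Borel on $\{\mathcal{E}\le N\}$, with the convention $\infty\cdot0$ handled by working on $\mathcal{F}_X$ and setting the value to $\infty$ off it. Such rectangles form a $\pi$-system generating the Borel sets of $X\times Y$, and the Dynkin closure properties hold pointwise in $(y,f)$. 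Hence $(y,f)\mapsto G_{\Omega^y}(f)$ is Borel on $Y\times L^p(X)$ for every Borel $\Omega$.

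\textbf{Step 4: composition with the section map.} It then suffices to show that $y\mapsto u(\cdot,y)\in L^p(X,\mu_X)$ is $\mu_Y$-measurable, with $u$ finite a.e. By Fubini, $u(\cdot,y)\in L^p$ for a.e. $y$. Approximating $u$ in $L^p(\mu_X\otimes\mu_Y)$ by finite sums $\sum_j \mathds 1_{A_j\times B_j}$ of rectangles gives simple maps $y\mapsto \sum_j\mathds 1_{B_j}(y)\mathds 1_{A_j}$. Passing to a subsequence, and using Fubini, these converge in $L^p(X)$ for a.e. $y$, so the section map is a.e. a limit of measurable simple maps into a separable metric space. Composing this map with the Borel map of Step 3 yields the measurability of $y\mapsto G_{\Omega^y}(u(\cdot,y))$, and the function in the $x$-variable is handled symmetrically. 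The main obstacle is Step 1: Borel measurability of $G_A$ is only available on the closed sublevel sets $\{\mathcal{E}\le N\}$, and one must check carefully that the value $\infty$ off $\mathcal{F}_X$ and the $\liminf$ in axiom (7) combine correctly. If lower semicontinuity for a fixed open $A$ fails to give Borel measurability directly, I would fall back on the sublevel-set decomposition $\mathcal{F}_X=\bigcup_N\{\mathcal{E}\le N\}$ described above.
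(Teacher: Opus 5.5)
Your argument is correct, but it is organized differently from the paper's.

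\textbf{The paper's route.} The paper fixes $u$ and applies Lusin's theorem to the section map $F(y)=u(\cdot,y)$. This gives closed sets $K_i$, exhausting $Y$ up to a null set, on which $F$ is continuous. Axiom (7) then makes $y\mapsto\Gamma_X\langle F(y)\rangle(A)$ lower semicontinuous on $K_i\cap Y_M\cap B$, where $Y_M$ is the energy sublevel set. This settles Borel rectangles $A\times B$, and the monotone class theorem extends it to all Borel $\Omega$.

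\textbf{Your route.} You prove a statement that does not depend on $u$ at all. The space $\mathcal{F}_X=\bigcup_N\{\mathcal{E}_X\le N\}$ is an $F_\sigma$ subset of $L^p(X,\mu_X)$, and the extended functional $G_A$ is Borel. Via Dynkin's theorem on $Y\times L^p(X,\mu_X)$, the map $(y,f)\mapsto G_{\Omega^y}(f)$ is jointly Borel. The lemma then follows by composing with the measurable map $y\mapsto(y,u(\cdot,y))$.

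\textbf{What each approach buys.} Your route replaces Lusin's theorem with the elementary fact that a.e. limits of simple maps are measurable, and it yields a single Borel function valid for every $u$. It also handles the convention ``$\infty$ off $\mathcal{F}_X$'' explicitly in the difference step of the $\pi$-$\lambda$ argument. The paper passes over this point: its assertion $G|_{Y\setminus B}=0$ holds only where $u(\cdot,y)\in\mathcal{F}_X$. The paper's route is shorter for a single fixed $u$.

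\textbf{Minor points.}
\begin{enumerate}
\item Axiom (7) is stated for every Borel set $A$. So $G_A$ is already lower semicontinuous on each $\{\mathcal{E}_X\le N\}$ for all Borel $A$, your Step 2 is unnecessary, and Step 3 can start from Borel rectangles as the paper does.
\item You need separability of $X$ and $Y$ to get separability of $L^p$, $\mathcal{B}(X\times Y)=\mathcal{B}(X)\otimes\mathcal{B}(Y)$, and $\mathcal{B}(Y\times L^p)=\mathcal{B}(Y)\otimes\mathcal{B}(L^p)$. This separability comes from volume doubling, which forces the space to be metrically doubling. It does not come from local compactness, which alone does not give it.
\item The approximating simple functions in Step 4 need coefficients: $\sum_j c_j\mathds{1}_{A_j\times B_j}$.
\end{enumerate}
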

\begin{proof}
    The two claims are symmetric, and thus it suffices to prove the first one.
    Consider the $\mu_Y$-measurable function $F: Y\to L^p(X,\mu_X)$ given by $F(y)=u(\,\cdot \, ,y)$.
    By the separability of $L^p(X,\mu)$ and Lusin's theorem, there exist closed subsets $K_i\subseteq Y$ for $i\in \N$ such that
    \[
    \mu_Y\left(Y\setminus \bigcup_{i\in \N} K_i\right)=0    
    \]
    and $F|_{K_i}$ is continuous.
    By the weak lower semicontinuity of the $p$-Dirichlet space, the function $K_i \to [0,\infty],\, y \mapsto \Gamma\Span{F(y)}(X)$, is lower-semicontinuous.
    Here we interpret $\Gamma_X\Span{F(y)}(X) = \infty$ if $F(y) \notin \mathcal{F}_X$.
    Hence, the set
    \[
        Y_M := \{ y \in K : \Gamma_X\Span{F(y)} \leq M \}
    \]
    is $\mu_Y$-measurable for all $M \geq 0$. 

    We first consider the case $\Omega = A \times B \subseteq Z$ where $A,B$ are Borel sets.
    Let $G : Y \to [0,\infty]$ be the function in the claim.
    Then $G|_{Y\setminus B} = 0$ and $G(y) = \Gamma_X\Span{F(y)}(A)$ for all $y \in B$.
    By the weak lower semicontinuity, $G|_{K_i \cap Y_M \cap B}$ is lower semicontinuous for every $i,M\in \N$. By definition, $G(y) = \infty$ in $B\setminus \bigcup_{M \in \N} Y_M$.
    By combining these facts, it follows that $G$ is $\mu_Y$-measurable.
    
    The measurability for a general Borel set $\Omega \subseteq Z$ follows from the monotone class theorem. 
\end{proof}

\begin{proof}[Proof of Theorem \ref{thm:Z}]
    The Cartesian product is denoted by $(Z,d,\mu,\mathcal{E},\mathcal{F},\Gamma)$.
    The integrals that determine $\mathcal{E}$ and $\Gamma$ are well-defined by Lemma \ref{lemma:AbstractNonsense}.
    Moreover, most of the required axioms are immediately transferred to the product through the definitions. Only the non-trivial ones  are checked here.

    We begin by proving the completeness of $\mathcal{F}$.
    Let $\{u_i\}_{i = 1}^\infty \subseteq \mathcal{F}$ be a Cauchy sequence, and let $u$ be the limit in $L^p(Z,\mu)$.
    Note that it is sufficient to prove that $u \in \mathcal{F}$ and that some subsequence $\{u_i\}_{i = 1}^\infty$ converges  in $\mathcal{F}$.
    Hence, we may assume by the Fubini's theorem that, for $\mu_X$-a.e. $x \in X$ and $\mu_Y$-a.e. $y \in Y$, we have convergence $u_i(x,\, \cdot\,) \to u(x,\, \cdot\,)$ and $u_i(\, \cdot\,,y) \to u(\, \cdot\,,y)$ in the respective $L^p$-spaces.
    Then, by the weak lower semicontinuity and Fatou's lemma,
    \begin{align*}
        \mathcal{E}(u) & = \int_{X} \mathcal{E}_Y(u(x,\, \cdot\,)) \, \text{d}\mu_X + \int_{Y} \mathcal{E}_X(u(\, \cdot\,,y)) \, \text{d}\mu_Y\\
        & \leq \int_{X} \liminf_{n \to \infty} \mathcal{E}_Y(u_i(x,\, \cdot\,)) \, \text{d}\mu_X + \int_{Y} \liminf_{n \to \infty} \mathcal{E}_X(u_i(\, \cdot\,,y)) \, \text{d}\mu_Y\\
        & \leq \liminf_{n \to \infty} \int_{X} \mathcal{E}_Y(u_i(x,\, \cdot\,)) \, \text{d}\mu_X + \liminf_{n \to \infty} \int_{Y} \mathcal{E}_X(u_i(\, \cdot\,,y)) \, \text{d}\mu_Y\\
        & \leq \liminf_{i \in \N} \mathcal{E}(u_i) < \infty.
    \end{align*}
    The finiteness of the last row follows from the fact that $\{u_i\}_{i = 1}^\infty \subseteq \mathcal{F}$ is Cauchy. Hence $u \in \mathcal{F}$. By repeating the previous arguments, we also get
    \begin{align*}
        \limsup_{i \to \infty} \mathcal{E}(u - u_i) & \leq \limsup_{i \to \infty} \left( \liminf_{ j \to \infty } \mathcal{E}(u_j - u_i) \right) = 0.
    \end{align*}
    This completes the proof of the completeness of $\mathcal{F}$. A similar treatment yields the weak lower semicontinuity.

    
    The doubling property of $\mu$ is obvious from the doubling properties of the marginals. Hence \textrm{VD} holds.
    The upper capacity estimate $\cCap(\Psi)$ follows by taking functions of the form
    \[
        (x,y) \mapsto \phi(x) \cdot \psi(y)
    \]
    where $\phi \in \mathcal{F}_X \cap C(X)$ and $\psi \in \mathcal{F}_Y \cap C(Y)$ are provided by $\cCap(\Psi)$ of the factors. Lastly, the Poincar\'e inequality $\PI(\Psi)$ can be verified using the same argument as in \cite[Proof of Theorem 3]{BBtensor}.
    We emphasize that it is essential for $\Psi$ to be independent of the spatial variable, and to be the same in both factors.
\end{proof}

Recall minimal energy dominant measure from Subsection \ref{subsec:BasicProp}.

\begin{corollary}\label{cor:Lambda-minimal}
    Assume the hypotheses in Theorem \ref{thm:Z}, and that $(X,d_X)$, $(Y,d_Y)$ are compact. Let $(Z,d,\mu,\mathcal{E},\mathcal{F},\Gamma)$ be the Cartesian product in Theorem \ref{thm:Z}, and $\Lambda_X$ and $\Lambda_Y$ be a pair of minimal energy dominant measures of $(X,d_X,\mu_X,\mathcal{E}_X,\mathcal{F}_X,\Gamma_X)$ and  $(Y,d_Y,\mu_Y,\mathcal{E}_Y,\mathcal{F}_Y,\Gamma_Y)$, respectively. Then
    \[
        \Lambda := \Lambda_X \otimes \mu_Y + \mu_X \otimes \Lambda_Y.
    \]
    is a minimal energy dominant measure of $(Z,d,\mu,\mathcal{E},\mathcal{F},\Gamma)$.
\end{corollary}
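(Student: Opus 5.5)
Two things must be shown: that $\Lambda$ is energy dominant, and that $\Lambda \ll \nu$ for every energy dominant measure $\nu$ of the product.

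\emph{Energy dominance.} Let $u \in \mathcal{F}$ and let $\Omega \subseteq Z$ be a Borel set with $\Lambda(\Omega) = 0$. Then $(\Lambda_X \otimes \mu_Y)(\Omega) = 0$, so by Fubini $\Lambda_X(\Omega^y) = 0$ for $\mu_Y$-a.e.\ $y$. For $\mu_Y$-a.e.\ $y$ we also have $u(\,\cdot\,,y) \in \mathcal{F}_X$, so energy dominance of $\Lambda_X$ gives $\Gamma_X\Span{u(\,\cdot\,,y)}(\Omega^y) = 0$ for $\mu_Y$-a.e.\ $y$. The symmetric argument gives $\Gamma_Y\Span{u(x,\,\cdot\,)}(\Omega_x) = 0$ for $\mu_X$-a.e.\ $x$. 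By the definition of $\Gamma\Span{u}$ in Definition \ref{def:Product} (measurability from Lemma \ref{lemma:AbstractNonsense}), $\Gamma\Span{u}(\Omega) = 0$.

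\emph{Minimality.} Fix an energy dominant $\nu$ on $Z$ and a Borel set $\Omega$ with $\nu(\Omega) = 0$. Then $\Gamma\Span{u}(\Omega) = 0$ for every $u \in \mathcal{F}$, and we must show $(\Lambda_X \otimes \mu_Y)(\Omega) = 0$; the other term is symmetric. Following the proof of Proposition \ref{prop:MinEnergyDom}, we may take $\Lambda_X = \sum_j c_j \Gamma_X\Span{f_j}$ with $f_j \in \mathcal{F}_X$ and $c_j > 0$. Any other minimal energy dominant measure is mutually absolutely continuous with this one, so the choice does not matter. It therefore suffices to show $(\Gamma_X\Span{f} \otimes \mu_Y)(\Omega) = 0$ for each $f \in \mathcal{F}_X$. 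The natural test functions are $u(x,y) = f(x)\psi(y)$ with $\psi \in \mathcal{F}_Y \cap C(Y)$ and $0 \le \psi \le 1$. Such $u$ lies in $\mathcal{F}$: the $y$-sections are $\psi(y) f \in \mathcal{F}_X$, and the $x$-sections are $f(x)\psi \in \mathcal{F}_Y$, with energy $|f(x)|^p \mathcal{E}_Y(\psi)$, which is integrable since $f \in L^p$. Then
\[
    0 = \Gamma\Span{u}(\Omega) \geq \int_Y \abs{\psi(y)}^p\, \Gamma_X\Span{f}(\Omega^y)\, \text{d}\mu_Y(y),
\]
using homogeneity $\Gamma_X\Span{\psi(y) f} = \abs{\psi(y)}^p \Gamma_X\Span{f}$. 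Hence $\Gamma_X\Span{f}(\Omega^y) = 0$ for $\mu_Y$-a.e.\ $y$ in $\{\psi > 0\}$. To cover all of $Y$, apply $\cCap(\Psi)$ to a countable family of balls covering $Y$ (or simply take $\psi \equiv 1$ if constants lie in $\mathcal{F}_Y$, which holds since $Y$ is compact and the cutoff of a large ball is identically $1$). Each ball gives a continuous $\psi_B \in \mathcal{F}_Y$ with $\psi_B = 1$ on $B$. Taking the union over the countable family, $\Gamma_X\Span{f}(\Omega^y) = 0$ for $\mu_Y$-a.e.\ $y \in Y$. Fubini then gives $(\Gamma_X\Span{f} \otimes \mu_Y)(\Omega) = 0$.

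The main obstacle is the membership $u = f \otimes \psi \in \mathcal{F}$, including joint measurability and $L^p$ integrability, together with the claim that sections of a product function have the expected energies. Both follow directly from Definition \ref{def:Product} and homogeneity, so I expect only bookkeeping there. Two further details need care: the measurability of $y \mapsto \Gamma_X\Span{f}(\Omega^y)$, supplied by Lemma \ref{lemma:AbstractNonsense}, and the fact that the positivity set of the chosen cutoffs exhausts $Y$.
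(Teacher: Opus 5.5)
Your proof is correct and follows essentially the paper's route. Energy dominance comes from the slice formula for $\Gamma\Span{u}$. Minimality comes from testing $\nu$ against functions of the form $(x,y)\mapsto f(x)$ built from the summands $\Gamma_X\Span{f_j}$ of a minimal energy dominant measure as constructed in Proposition \ref{prop:MinEnergyDom}. The only difference is that the paper takes $\psi \equiv 1$ directly. Then $u(x,y)=f(x)$ lies in $\mathcal{F}$ by compactness, and since strong locality gives $\Gamma_Y\Span{c}=0$ for constants $c$, it satisfies $\Gamma\Span{u}=\Gamma_X\Span{f}\otimes\mu_Y$ exactly, so your covering step with cutoff functions is unnecessary.
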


\begin{proof}
    The fact that $\Lambda$ is energy dominant is obvious from the definition of the measures $\Gamma$ in Definition \ref{def:Product}.
    Moreover, since any pair of minimal energy dominant measures are mutually absolutely continuous by definition, it is sufficient to prove the claim for any choice of $\Lambda_X$ and $\Lambda_Y$. Hence, without loss of generality we assume that $\Lambda_X$ and $\Lambda_Y$ are finite Radon measures of the form
    \[
        \Lambda_X = \sum_{i = 1}^\infty \alpha_i \Gamma_X\Span{f_i} \text{ and } \Lambda_Y = \sum_{i = 1}^\infty \beta_i \Gamma_Y\Span{g_i}.
    \]
    Such choices exist by the proof of Proposition \ref{prop:MinEnergyDom}.
    
    We verify the minimality of $\Lambda$. Let $\nu$ be another energy dominant measure.
    Consider the functions of the form $u(x,y) := f(x)$ and $v(x,y) := g(y)$, where $f \in \mathcal{F}_X$ and $g \in \mathcal{F}_Y$. By the compactness of the ambient spaces, $u,v \in \mathcal{F}$.
    A direct computation shows that
    \[
        \Gamma\Span{u} = \Gamma_X \Span{f} \otimes \mu_Y \text{ and } \Gamma\Span{v} = \mu_X \otimes \Gamma_X \Span{g}.
    \]
    Both of these measures are absolutely continuous with respect to $\nu$. Since $\Lambda$ is a sum of such measures, it is absolutely continuous with respect to $\nu$.
\end{proof}

\section{Conformal Dimension}\label{Sec:Confdim}

This section relates properties of $p$-Dirichlet spaces to the conformal dimension of the ambient space.
We consider the following general setting, and impose further assumptions when necessary.

\begin{assumption}\label{Assumption:Confdim}
    Fix $p \in (1,\infty)$, $\beta > 0$, $d_{\textrm{H}} > 0$, and let $(X,d,\mu,\mathcal{E},\mathcal{F},\Gamma)$ be a $(p,\beta)$-Poincar\'e--Dirichlet space where $(X,d)$ is $d_{\textrm{H}}$-Ahlfors regular and $\mu$ is the $d_{\textrm{H}}$-Hausdorff measure.
    The underlying metric space $(X,d)$ is also assumed to be of bounded turning and compact.
    
\end{assumption}

For each $\varepsilon > 0$ we fix a collection $\mathcal{U}_\varepsilon$ consisting of open balls in $(X,d)$ so that the centers form an $\varepsilon$-net. We also let $\{ \psi_{U} \}_{U \in \mathcal{U}_\varepsilon}$ be a partition of unity subordinate to $\mathcal{U}_\varepsilon$ as in Lemma \ref{lemma:POU}.

\subsection{Moduli estimates}
We begin by providing estimates on the discrete moduli. Proof methods are very similar to those in Heinonen and Koskela \cite{HK}.
No further conditions are imposed here.

The first estimates concern annular moduli.

\begin{lemma}\label{lemma:Carrasco}
    Suppose that Assumption \ref{Assumption:Confdim} holds.
    There are $C,R > 0$ such that, for all $x \in X$ and $0 < r \leq R$, we have the following. If $\mathcal{C}$ denotes the family of curves in $X$ that intersect with both $B(x,r)$ and $X\setminus B(x,2r)$, then
    \[
        C^{-1} \left(\frac{\varepsilon}{r}\right) ^{\beta - d_{\Hr}} \leq \Mod_p(\mathcal{C},\mathcal{U}_\varepsilon) \leq C \left(\frac{\varepsilon}{r}\right) ^{\beta - d_{\Hr}}
    \]
    for all $0 < \varepsilon \leq r$.
\end{lemma}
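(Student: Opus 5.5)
We prove the two bounds separately. The upper bound comes from a test function built with $\cCap(\beta)$ and the partition of unity in Lemma~\ref{lemma:POU}. The lower bound comes from testing an admissible $\rho$ against a discrete smoothing of it and then applying $\PI(\beta)$. Throughout, Ahlfors regularity gives $\mu(B(z,s)) \simeq s^{d_{\Hr}}$. We assume the balls in $\mathcal{U}_\varepsilon$ have radius comparable to $\varepsilon$, say $2\varepsilon$, so that $\sum_{U} \mathds{1}_{2U}$ is bounded and Lemma~\ref{lemma:POU} applies with uniform constants. Then $\mathcal{E}(\psi_U) \lesssim \varepsilon^{d_{\Hr}-\beta}$.

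\emph{Upper bound.} Use $\cCap(\beta)$ to pick a continuous $\varphi \in \cF$ with $\varphi = 1$ on $B(x,r)$, $\varphi = 0$ off $B(x,2r)$, and $\mathcal{E}(\varphi) \lesssim r^{d_{\Hr}-\beta}$. The natural candidate is $\rho(U) := \operatorname{osc}_{U}\varphi$, but oscillations are not controlled by energy in general. Instead we use a discrete gradient: set
\[
\rho(U) := C\Big(\sum_{V\cap U\neq\emptyset} |\varphi_V - \varphi_U|\Big) \quad\text{plus boundary terms},
\]
where $\varphi_U$ are averages over $U$. Any curve from $B(x,r)$ to $X \setminus B(x,2r)$ passes through a chain of consecutive intersecting balls $U_0,\dots,U_N$, with $\varphi_{U_0}\approx 1$ and $\varphi_{U_N}\approx 0$ after a slight adjustment of the inner and outer radii. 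Telescoping then gives admissibility. The differences of averages satisfy
\[
|\varphi_V-\varphi_U|^p \lesssim \varepsilon^{\beta-d_{\Hr}} \Gamma\Span{\varphi}(\sigma' U)
\]
by $\PI(\beta)$ on a ball of radius comparable to $\varepsilon$. Summing with bounded overlap yields
\[
\Mod_p \lesssim \varepsilon^{\beta-d_{\Hr}}\mathcal{E}(\varphi) \lesssim (\varepsilon/r)^{\beta-d_{\Hr}}.
\]
The bounded turning assumption enters to keep the chains localized, and $R$ is chosen so that the annulus $B(x,2r)\setminus B(x,r)$ is nonempty, for example $R$ a small multiple of $\diam X$.

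\emph{Lower bound.} Let $\rho$ be the optimal admissible function. Define
\[
u(z) := \min\Big\{1,\ \inf_\gamma \sum_{U\cap\gamma\neq\emptyset}\rho(U)\Big\},
\]
where the infimum runs over curves joining $z$ to $\overline{B(x,r)}$. Then $u = 0$ on $B(x,r)$, and $u = 1$ off $B(x,2r)$ by admissibility. We replace $u$ with its discrete version $\tilde u := \sum_U u(x_U)\psi_U$. Since $|u(x_U)-u(x_V)| \lesssim \rho(U)+\rho(V)$ for neighbouring $U,V$, which uses bounded turning to connect centers by short curves, the triangle inequality and strong locality give
\[
\Gamma\Span{\tilde u}(U) \lesssim \varepsilon^{d_{\Hr}-\beta}\sum_{V\sim U}\rho(V)^p,
\]
and hence $\mathcal{E}(\tilde u) \lesssim \varepsilon^{d_{\Hr}-\beta}\sum_U\rho(U)^p$.

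Conversely, $\tilde u$ is near $0$ on $B(x,r/2)$ and near $1$ off $B(x,3r)$, at least when $\varepsilon \le r/10$; larger $\varepsilon$ is handled trivially, since then both sides are comparable to constants. Applying $\PI(\beta)$ on $B(x,C r)$ gives
\[
r^{d_{\Hr}} \lesssim r^\beta\, \mathcal{E}(\tilde u),
\]
where we use that $X\setminus B(x,3r)$ has measure comparable to $r^{d_{\Hr}}$ inside $B(x,Cr)$. This holds by Ahlfors regularity and connectedness, guaranteed by bounded turning, for $r \le R$. Combining the two displays yields
\[
\Mod_p \gtrsim (\varepsilon/r)^{\beta-d_{\Hr}}.
\]

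The main obstacle is the energy estimate for the discretized function, in both directions. The key step there is the bound $\Gamma\Span{\sum_V a_V\psi_V}(U) \lesssim \varepsilon^{d_{\Hr}-\beta}\max_{V\sim U}|a_V-a_U|^p$. We would obtain it by writing $\sum_V a_V\psi_V = a_U + \sum_{V}(a_V-a_U)\psi_V$ on $U$, which is valid since $\sum_V\psi_V = 1$, and then using locality, the triangle inequality, and bounded overlap.
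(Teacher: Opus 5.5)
Your strategy is the paper's in both directions, but two steps do not work as written.

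In the upper bound, $\cCap(\beta)$ only provides cutoffs between $B$ and $2B$, so your $\varphi$ equals $1$ only on $B(x,r)$ and $0$ only off $B(x,2r)$. The first and last balls of your chain then need not satisfy $\kint_{U_0}\varphi\,\text{d}\mu-\kint_{U_N}\varphi\,\text{d}\mu\geq c>0$ with $c$ uniform. A ball meeting $B(x,r)$ may see only a tiny part of the set where $\varphi=1$, and the same can happen at the outer sphere. So the telescoping gives no admissibility, and you never say how the ``boundary terms'' would be controlled. What is missing is a cutoff with margins, e.g.\ $\varphi=1$ on $B(x,5r/4)$, $\varphi=0$ off $B(x,7r/4)$, $\mathcal{E}(\varphi)\lesssim r^{d_{\Hr}-\beta}$; this is not an instance of $\cCap(\beta)$. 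The paper builds it by covering $B(x,5r/4)$ with boundedly many balls $B(x_i,r/8)$, applying $\cCap(\beta)$ to each, and setting $\varphi:=\min\{\sum_i\varphi_i,1\}$, using the triangle inequality and Lipschitz contractivity. It then works with $\mathcal{U}_{\varepsilon/32}$, which \cite[Proposition 2.2]{BourK} allows. The averages are then exactly $1$ on balls meeting $B(x,r)$ and exactly $0$ on balls meeting $X\setminus B(x,2r)$. With your radius-$2\varepsilon$ balls you would also have to restrict to $\varepsilon\ll r$ and treat $\varepsilon\sim r$ trivially, as you do in the lower bound.

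In the lower bound, the claim $u=0$ on $B(x,r)$ is false. Every curve through $z$ meets every $U\ni z$, so $u(z)\geq\min\{1,\sum_{U\ni z}\rho(U)\}$. Nothing then forces $\tilde u$ to be small on a ball of radius comparable to $r$, which your final $\PI(\beta)$ step needs. You must normalize $\rho$ first: for $\varepsilon\ll r$, put $\rho(U)=0$ on every ball meeting $B(x,r/4)$. This keeps admissibility and does not increase the $p$-mass. Indeed, every $\gamma\in\mathcal{C}$ contains a subcurve in $\mathcal{C}$ lying outside $B(x,r/2)$ (cut $\gamma$ at its last visit to $\overline{B}(x,r/2)$ before it leaves $B(x,2r)$), and that subcurve meets none of these balls. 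For the unique optimal $\rho$ this vanishing is automatic, but it has to be argued. The paper makes the same normalization on the outer side ($\rho(U)=0$ whenever $2U$ meets $X\setminus B(x,4r)$), because its $f(U)$ measures $\rho$-length to $X\setminus B(x,2r)$ rather than to $B(x,r)$. You should also take the infimum over curves meeting the open ball $B(x,r)$; otherwise $u=1$ off $B(x,2r)$ does not follow from admissibility. Finally, bounded turning does not give $\abs{u(x_U)-u(x_V)}\lesssim\rho(U)+\rho(V)$, since the joining curve has diameter up to about $K\varepsilon$ and meets other balls. The correct bound is $\sum_{W\cap 8KU\neq\emptyset}\rho(W)$ as in \eqref{eq:Annular-3}. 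By bounded overlap this still yields $\mathcal{E}(\tilde u)\lesssim\varepsilon^{d_{\Hr}-\beta}\sum_U\rho(U)^p$, so with these repairs your argument closes.
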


\begin{proof}
Fix $R:= \diam(X)/32$.
Let $x \in X$, $0 < r \leq R$ and $0 < \varepsilon \leq r$. Throughout the proof, $\mathcal{C}$ is as in the claim.
Note that $\mathcal{C}$ is non-empty because $X$ is assumed to be of bounded turning.
By \cite[Proposition 2.2]{BourK}, we may prove the estimates for $\Mod_p(\mathcal{C},\mathcal{U}_{\varepsilon/32})$ instead of $\Mod_p(\mathcal{C},\mathcal{U}_{\varepsilon})$.

The latter inequality in the claim is proven first.
By \eqref{eq:AR} and $\cCap(\beta)$, we can construct 
$\varphi \in \mathcal{F}$ with $\varphi|_{B(x,5r/4)} = 1$, $\varphi|_{X \setminus B(x,7r/4)}=0$ and
\[
\mathcal{E}(\varphi)\lesssim r^{d_{\Hr}-\beta}.
\]
More precisely, we cover $B(x,5r/4)$ with $\{ B(x_i, r/8) \}_{i \in I}$ where $\{x_i\}_{i \in I} \subseteq B(x,5r/4)$ is an $r/8$-net. Then, for each $i \in I$, we take $\varphi_i \in \mathcal{F}$ provided by $\cCap(\beta)$, and define
\[
    \varphi := \min\left\{\sum_{i \in I} \varphi_i,1\right\}. 
\]
The energy estimate for $\varphi$ follows from the Lipschitz contractivity of the $p$-Dirichlet space because we can bound the cardinality of $I$ with a number depending only on the constants in \eqref{eq:AR}.

We now define a function $f :\mathcal{U}_{\varepsilon/32} \to [0,\infty)$ given by
\[
f(U) := \kint_U \varphi\, \text{d}\mu.
\]
By $\varepsilon \leq r$ and $\diam(U)\leq \varepsilon/16$ for all $U\in \mathcal{U}_{\varepsilon/32}$, we have $U\subseteq B(x,5/4r)$ whenever $U \cap B(x,r) \neq \emptyset$. Similarly, if $U \in \mathcal{U}_{\varepsilon/32}$ satisfies $U \cap X\setminus B(x,2r)\neq \emptyset$, we have $U \subseteq X\setminus B(x,7/4r)$.
Hence we have
\begin{equation}\label{eq:ForMod}
   f(U) = \begin{cases}
    1 & \text{ if } U \cap B(x,r) \neq \emptyset \\
    0 & \text{ if } U \cap X \setminus B(x,2r) \neq \emptyset.
    \end{cases}
\end{equation}
We now define $\rho : \mathcal{U}_{\varepsilon/32} \to [0,\infty)$ by
\[
\rho(U) := \max_{\substack{ V \in \mathcal{U}_{\varepsilon/32} \\ V \cap U \neq \emptyset}} \abs{f(U) - f(V)}.
\]
It follows from the triangle inequality and \eqref{eq:ForMod} that $\rho$ is $\mathcal{C}$-admissible. Its $p$-mass can be estimated
\begin{align*}
    \sum_{U \in \mathcal{U}_{\varepsilon/32}} \rho(U)^p & \lesssim \sum_{\substack{U,V \in \mathcal{U}_{\varepsilon/32} \\ U\cap V \neq \emptyset }} \abs{f(U) - f(V)}^p \lesssim \varepsilon^{\beta - d_{\Hr}} \mathcal{E}_p(\varphi) \lesssim \left(\frac{\varepsilon}{r}\right)^{\beta - d_{\Hr}}.
\end{align*}
The first inequality holds because we have bounded overlapping. The second follows from $\PI(\beta)$ and the bounded overlapping, and the the third is a consequence of the energy upper bound of $\varphi$.
Since $\rho$ is $\mathcal{C}$-admissible, this proves the latter inequality in the claim.

The converse inequality is considered next.
Let $\rho : \mathcal{U}_{\varepsilon/32} \to [0,\infty)$ be $\mathcal{C}$-admissible.
Since we aim to prove a lower bound for the $p$-mass of $\rho$, we may redefine $\rho$ as follows. Let $\tilde{\rho}$ be such that $\tilde{\rho}(U) = 0$ if $2U \cap (X\setminus B(x,4r)) \neq \emptyset$ and $\tilde{\rho}(U) = \rho(U)$ otherwise.
Because $\tilde{\rho}$ is admissible with smaller $p$-mass, by replacing $\rho$ with $\tilde{\rho}$ if necessary, we may assume that $\rho(U) = 0$ whenever $2U \cap (X\setminus B(x,4r)) \neq \emptyset$.

We define a function $f:\mathcal{U}_{\varepsilon/32} \to [0,\infty)$ according to
\begin{equation*}
    f(U) := \min_{ \gamma } \sum_{ \substack{U \in \mathcal{U}_{\varepsilon/32}\\ U \cap \gamma \neq \emptyset } }\rho(U)
\end{equation*}
where the minimum is taken over all curves $\gamma$ such that $\gamma$ intersects with both $U$ and $X \setminus B(x,2r)$.
Since $\rho$ is $\mathcal{C}$-admissible, $f(U) \geq 1$ for all $U$ with $2U\cap B(x,r/2)\neq \emptyset$.
We also clearly have $f(U) = 0$ for all $U \in \mathcal{U}_{\varepsilon/32}$ with $2U \cap X\setminus B(x,4r) \neq \emptyset$ because we can just take $\gamma$ to be a constant sequence.
Now, we define a function $u$ using the partition of unity given by
\begin{equation}\label{eq:rho->u}
    u := \sum_{U \in \mathcal{U}_{\varepsilon/32}} f(U) \cdot \psi_{U}.
\end{equation}
Since $X$ is assumed to be compact, $u \in \mathcal{F}$ because it is a finite sum of functions in $\mathcal{F}$.
By the properties of $\{\psi_{U}\}_{U \in \mathcal{U}_{\varepsilon/32}}$ and $f$, we have $u \geq 1$ on $B(x,r/2)$ and $u = 0$ on $X \setminus B(x,4r)$.
Now, if $y \in X$ so that $d(x,y) = 5r$, we can estimate using $\PI(\beta)$ and $d_{\textrm{H}}$-the Ahlfors regularity that
\begin{equation}\label{eq:Annular-1}
    1 \leq \abs{ u_{B(x,r/2)} - u_{B(y,r/2)} }^p \lesssim \kint_{B(x,10r)} \abs{u - u_{B(x,10r)}}^p \, \text{d}\mu \lesssim r^{\beta - d_{\textrm{H}}} \mathcal{E}(u).
\end{equation}

Then we estimate $\mathcal{E}(u)$ using the properties of the partition of unity.
For each $U \in \mathcal{U}_{\varepsilon/32}$ and $x \in U$ we have
\[
    u(x) = \sum_{ \substack{V \in \mathcal{U}_{\varepsilon/32} \\ U \cap 2V \neq \emptyset} } \left(f(V) - f(U)\right) \psi_V(x)  + f(U).
\]
By using the strong locality and the triangle inequality of $\Gamma\Span{\, \cdot\, }$, we get
\[
    \Gamma\Span{u}(U)^{\frac{1}{p}} \leq  \sum_{\substack{V \in \mathcal{U}_{\varepsilon/32} \\ U \cap 2V \neq \emptyset}} \abs{f(V) - f(U)} \mathcal{E}(\psi_V)^{\frac{1}{p}}   \lesssim  \sum_{\substack{V \in \mathcal{U}_{\varepsilon/32} \\ U \cap 2V \neq \emptyset}} \abs{f(V) - f(U)} \varepsilon^{(d_{\textrm{H}} - \beta)/p} .
\]
Note that the number of $V \in \mathcal{U}_{\varepsilon/32}$ with $U \cap 2V \neq \emptyset$ can be bounded by $N \in \N$ depending only the Ahlfors regularity constants in \eqref{eq:AR}.
The Hölder's inequality now gives
\[
    \sum_{\substack{V \in \mathcal{U}_{\varepsilon/32} \\ U \cap 2V \neq \emptyset}} \abs{f(V) - f(U)} \varepsilon^{(d_{\textrm{H}} - \beta)/p} \leq N^{\frac{p-1}{p}} \left(\sum_{\substack{V \in \mathcal{U}_{\varepsilon/32} \\ U \cap 2V \neq \emptyset}} \abs{f(V) - f(U)}^p \varepsilon^{d_{\textrm{H}} - \beta} \right)^{\frac{1}{p}}.
\]
By raising the previous two inequalities to the power $p$ and summing them over all $U \in \mathcal{U}_{\varepsilon/32}$, we get
\begin{equation}\label{eq:Annular-2}
    \mathcal{E}(u) \lesssim \varepsilon^{d_{\Hr} - \beta} \sum_{\substack{U,V \in \mathcal{U}_{\varepsilon/32} \\ U\cap 2V \neq \emptyset }}  \abs{f(U) - f(V)}^p.
\end{equation}

Lastly, we estimate the sum in the right-hand side.
Let $U,V \in \mathcal{U}_{\varepsilon/32}$ such that $2U \cap 2V \neq \emptyset$, and assume without loss of generality that $f(U) \leq f(V)$.
Take a curve $\gamma_U$ that minimizes the sum in the definition of $f(U)$. By the bounded turning condition, there is a curve $\gamma$ connecting $V$ to a point in $\gamma_U \cap U$ with $\diam(\gamma) \leq 4K\varepsilon$. Since $\gamma \cup \gamma_U$ is a competitor for $f(V)$, we have
\begin{align}\label{eq:Annular-3}
    \abs{f(U) - f(V)} & =  f(V) - f(U) \leq \sum_{ \substack{ W \in \mathcal{U}_{\varepsilon/32} \\W \cap (\gamma \cup \gamma_U) \neq \emptyset }} \rho(U) - \sum_{ \substack{ W \in \mathcal{U}_{\varepsilon/32} \\ W \cap \gamma_U \neq \emptyset} } \rho(W)\\
    & \leq \sum_{\substack{ W \in \mathcal{U}_{\varepsilon/32} \\ W \cap \gamma \neq \emptyset}} \rho(W) \leq \sum_{ \substack{ W \in \mathcal{U}_{\varepsilon/32} \\ 8KU \cap W \neq \emptyset } } \rho(W).\nonumber
\end{align}
We can bound the number of $W \in \mathcal{U}_{\varepsilon/32}$ in the last sum by a number depending only on $K$ in the bounded turning condition and the Ahlfors regularity constants.
By summing the previous inequality over $U,V$, we get
\[
    \sum_{\substack{U,V \in \mathcal{U}_{\varepsilon/32} \\ U\cap 2V \neq \emptyset }}  \abs{f(U) - f(V)}^p \leq \sum_{\substack{U,V \in \mathcal{U}_{\varepsilon/32} \\ 2U\cap 2V \neq \emptyset }}  \abs{f(U) - f(V)}^p \lesssim \sum_{U \in \mathcal{U}_{\varepsilon/32}} \rho(U)^p.
\]
By combining this with \eqref{eq:Annular-1} and \eqref{eq:Annular-2}, and the fact that $\rho$ is an arbitrary $\mathcal{C}$-admissible function, we obtain the remaining inequality of the claim.
\end{proof}

Then we verify the ball-Loewner type estimate. A continuous version of it first appeared in Bonk and Kleiner \cite{BK05}, and see also \cite{BourK,MuruganShimizu} for discrete variants.

\begin{lemma}\label{lemma:CBL}
    Suppose that Assumption \ref{Assumption:Confdim} holds.
    For every $A > 0$ there are $M,L \geq 1$ such that the following holds. If $x,y \in X$ and $r > 0$ such that $B(x,r)$ and $B(y,r)$ are disjoint open balls with $\dist(B(x,r),\dist(B(y,r))) \leq Ar$, and if $\mathcal{C}$ is the set of curves that intersect with both $B(x,r)$ and $B(y,r)$ with diameter at most $Lr$, then
    \[
        \Mod_p(\mathcal{C},\mathcal{U}_\varepsilon) \geq M^{-1} \left(\frac{\varepsilon}{r}\right) ^{\beta - d_{\Hr}}
    \]
    for all $0 < \varepsilon \leq r$.
\end{lemma}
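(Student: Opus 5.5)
The plan is to mirror the lower-bound half of the proof of Lemma \ref{lemma:Carrasco}: take an arbitrary $\mathcal{C}$-admissible $\rho$, turn it into a function $u\in\mathcal{F}$ that separates the two balls, and compare energies. By \cite[Proposition 2.2]{BourK} it suffices to work with $\mathcal{U}_{\varepsilon/32}$. Since we want a lower bound on the $p$-mass, we may first replace $\rho$ by $\tilde\rho$, which equals $\rho$ on those $U$ with $2U\subseteq B(x,L'r)$ and vanishes elsewhere. Here $L'$ is a constant comparable to $L$. Curves of diameter at most $Lr$ meeting $B(x,r)$ stay inside $B(x,(L+1)r)$, so $\tilde\rho$ is still admissible for $\mathcal{C}$.

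Next, for each $U\in\mathcal{U}_{\varepsilon/32}$, define $f(U)$ as the minimum of $\sum_{W\cap\gamma\neq\emptyset}\rho(W)$ over curves $\gamma$ that join $U$ to $B(x,r)$ and have diameter at most $L r/2$. When no such curve exists, cap $f$ by setting $f(U)=1$, and in any case replace $f$ by $\min\{f,1\}$. Then $f=0$ on cells meeting $B(x,r)$. On cells meeting $B(y,r)$ we get $f\ge 1$ by admissibility: concatenating a short connector with the competing curve produces a curve in $\mathcal{C}$, provided $L$ is large compared with $A$ and the bounded turning constant $K$.

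The difference estimate \eqref{eq:Annular-3} then carries over. Two cells with $2U\cap 2V\neq\emptyset$ are joined by a curve of diameter at most $4K\varepsilon$, so the diameter budget increases only by $O(K\varepsilon)\le O(Kr)$. The main obstacle is exactly this budget issue: a minimizing curve for $f(U)$ has diameter up to $Lr/2$, and appending a connector may exceed it. I would handle this by letting the allowed diameter in the definition of $f$ depend on position. Alternatively, I would restrict attention to cells inside $B(x,Lr/8)$, where connectors keep diameters below $Lr/2+4K\varepsilon$, which is at most $Lr$. Outside that region I would multiply by a cutoff $\varphi$ from $\cCap(\beta)$ of energy $\lesssim r^{d_{\Hr}-\beta}$. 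The cutoff is harmless because, by the construction above, $f=1$ there once the truncation at $1$ is in place.

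Finally, set $u=\sum_U f(U)\psi_U$ using Lemma \ref{lemma:POU}. Then $u=0$ on $B(x,r/2)$ and $u=1$ on $B(y,r/2)$, and \eqref{eq:Annular-2} gives $\mathcal{E}(u)\lesssim \varepsilon^{d_{\Hr}-\beta}\sum_U\rho(U)^p$. Since $d(x,y)\le (A+2)r$, the Poincar\'e inequality $\PI(\beta)$ on the ball $B(x,(A+3)r)$, together with Ahlfors regularity as in \eqref{eq:Annular-1}, yields $1\lesssim_A r^{\beta-d_{\Hr}}\mathcal{E}(u)$. Combining the two bounds gives $\sum\rho^p\gtrsim (\varepsilon/r)^{\beta-d_{\Hr}}$, with $M$ depending on $A$ through the doubling and Poincar\'e constants and the choice of $L$. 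The inflation factor $\sigma$ in $\PI$ has to be absorbed by requiring $L\ge C\sigma(A+3)$, so that the support of the relevant energy lies where $f$ is genuinely controlled by $\rho$.
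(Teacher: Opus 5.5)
Your overall architecture matches the paper's: a curve-minimization potential $f$, a function $u$ built from the partition of unity, and $\PI(\beta)$ on a ball containing both $B(x,r)$ and $B(y,r)$. The gap is at the step you yourself flag as the main obstacle, and neither proposed remedy closes it.

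With $f(U)$ defined as a minimum over curves of diameter at most $Lr/2$, the argument behind \eqref{eq:Annular-3} needs $\gamma\cup\gamma_U$ to be a \emph{competitor for $f(V)$}, i.e.\ to have diameter at most $Lr/2$. A minimizer $\gamma_U$ may use the whole budget, with its point in $U$ realizing the diameter; then even a connector of size $4K\varepsilon$ pushes it over. Your observation that $\diam(\gamma\cup\gamma_U)\le Lr/2+4K\varepsilon\le Lr$ only places the concatenation in $\mathcal{C}$; it gives no bound on $f(V)$. So $f$ can jump across a ``budget boundary'' on which $\rho$ vanishes. For example, if $\rho$ is concentrated near $B(y,r)$, then $f$ is typically $0$ on cells that can reach $B(x,r)$ within the budget and $1$ (by your convention) on cells that cannot, with no $\rho$-mass along the interface. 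If $\rho$ contains walls forcing cheap curves into long detours, such jumps occur close to $x$ as well. Being deep inside $B(x,Lr/8)$ does not stop a minimizer from using the full diameter budget. Hence the global bound $\mathcal{E}(u)\lesssim\varepsilon^{d_{\Hr}-\beta}\sum_U\rho(U)^p$ is not available. The cutoff does not help either. It is false that $f=1$ outside $B(x,Lr/8)$; in the example above $f$ typically vanishes on much of that region. Multiplying by $\varphi$ also leaves the uncontrolled energy of $u$ on the transition region where $0<\varphi<1$.

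What you need is a competitor class that is stable under appending short connectors near where energy is measured, together with a \emph{localized} energy estimate. The fix closest to your attempt is to minimize over curves \emph{contained in} a fixed ball $B(x,Lr/2)$, instead of curves of bounded diameter. Take $L$ to be a large multiple of $K(\sigma(A+3)+1)$ and consider cells $U,V$ with $2U\cap 2V\neq\emptyset$ and $2U\cap B(x,\sigma(A+3)r)\neq\emptyset$. For these, competitors exist and the connector stays inside the ball, so \eqref{eq:Annular-3} holds there. Every competitor has diameter at most $Lr$, so $f\geq 1$ near $y$. Finally, $\PI(\beta)$ on $B(x,(A+3)r)$ only involves $\Gamma\Span{u}(B(x,\sigma(A+3)r))$, so you need neither a cutoff nor a global energy bound.

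The paper gets the same effect differently. It puts no constraint on the curves and instead replaces $\rho$ by $\max\{\rho,\rho_b\}$. Here $\rho_b$ is admissible for curves crossing $B(y,2\sigma(A+10)r)\setminus B(y,\sigma(A+10)r)$ and vanishes on cells meeting the inner ball. Competitors are allowed to end in $B(x,r)\cup(X\setminus B(y,2\sigma(A+10)r))$. Then every competitor from a cell near $y$ contains a subcurve in $\mathcal{C}$ or in that annular family. The difference estimate holds everywhere, and on $B(y,\sigma(A+4)r)$ only $\rho$ enters.

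Two smaller points:
\begin{itemize}
\item $f=0$ near $B(x,r)$ requires the preliminary reduction $\rho(U)=0$ whenever $2U\cap B(x,3r/4)\neq\emptyset$. This is harmless, since each curve in $\mathcal{C}$ has a subcurve in $\mathcal{C}$ that misses these cells.
\item $f\geq 1$ should be claimed only for cells with $2U\cap B(y,r/2)\neq\emptyset$, because cells met by a connector into $B(y,r)$ are not counted in the sum.
\end{itemize}
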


\begin{proof}
    Let $\sigma,K$ be the constants in $\PI(\beta)$ and the bounded turning condition, respectively. Let $x,y \in X$, $r > 0$ and $A$ be as in the claim, $L := 2\sigma(A+10)$ and $\mathcal{C}$ be as in the claim. By \cite[Proposition 2.2]{BourK}, we may prove the estimates for $\Mod_p(\mathcal{C},\mathcal{U}_{\varepsilon/(16K)})$ instead of $\Mod_p(\mathcal{C},\mathcal{U}_{\varepsilon})$.
    In other words, given any $\mathcal{C}$-admissible $\rho : \mathcal{U}_{\varepsilon/(16K)} \to [0,\infty)$, we prove a lower bound for the $p$-mass
    \[
        \sum_{U\in \mathcal{U}_{\varepsilon/(16K)}} \rho (U)^p \geq M^{-1}\left(\frac{\varepsilon}{r}\right) ^{\beta - d_{\Hr}}.
    \]

    Let such $\rho$ be given.
    By a similar redefinition argument as in the proof of Lemma \ref{lemma:Carrasco}, we may assume without loss os generality that $\rho(U) = 0$ for all $U$ with $2U \cap B(x,3r/4) \neq \emptyset$.
    Define $\mathcal{C}_b$ be the collection of ``bad curves'', consisting of those that intersect with both $B(y,\sigma(A+10)r)$ and $X\setminus B(y, 2\sigma(A+10)r)$. Take any $\mathcal{C}_b$-admissible $\rho_b : \mathcal{U}_{\varepsilon/(16K)} \to [0,\infty)$ such that $\rho_b(U) = 0$ for every $U$ with $U \cap B(y,\sigma(A+10)r) \neq \emptyset$. Consider $\tilde{\rho} := \max \{ \rho,\rho_b \} $, and define $f : \mathcal{U}_{\varepsilon/(16/K)} \to [0,\infty)$ be given by
    \[
        f(U) := \min_{\gamma} \sum_{ \substack{ V \in \mathcal{U}_{\varepsilon/(16K)} \\
        U \cap \gamma \neq \emptyset} } \tilde{\rho}(V)
    \]
    where the minumum is taken over all curves $\gamma$ that intersect with $U$ and $B(x,r) \cup X \setminus B(y,2\sigma(A+10)r)$. Note that $f$ satisfies
    \[
        \begin{cases}
        f(U) = 0 \text{ if } 2U \cap B(x,r/2) \neq \emptyset,\\
        f(U) \geq 1 \text{ if } 2U \cap B(y,r/2) \neq \emptyset.
        \end{cases}
    \]
    The first row holds from $\rho(U) = 0 = \rho_b(U)$ when $2U \cap B(x,3r/4)$ by taking any constant curve $\gamma = \{z\} \subseteq U$. The second row follows from $\tilde{\rho} \geq \rho,\rho_b$ by noting that any $\gamma$ which is a competitor for $f(U)$ contains a subcurve in $\mathcal{C} \cup \mathcal{C}_b$.

    Now, we use the partition of unity to construct a function
    \[
        u := \sum_{U \in \mathcal{U}_{\varepsilon/(16K)} } f(U) \cdot \psi_U.
    \]
    By a similar computation as in \eqref{eq:Annular-2}, we have
    \[
        \Gamma\Span{u}( B(y,\sigma(A+4)r) ) \lesssim \sum_{ \substack{U \in \mathcal{U}_{\varepsilon/(16K)} \\ 2U \cap B(y,\sigma(A+4)r) } } \sum_{ \substack{V \in \mathcal{U}_{\varepsilon/(16K)} \\ 2U \cap 2V \neq \emptyset }} \abs{f(U) - f(V)}^p\varepsilon^{d_{\textrm{H}} - \beta}.
    \]
    If $U,V$ are as above, then
    \[
        \abs{f(U) - f(V)}^p \lesssim \sum_{ \substack{ W \in \mathcal{U}_{\varepsilon/(16K)} \\ 8KU \cap W \neq \emptyset } } \tilde{\rho}(W)^p = \sum_{ \substack{ W \in \mathcal{U}_{\varepsilon/(16K)} \\ 8KU \cap W \neq \emptyset } } \rho(W)^p.
    \]
    The inequality follows from the same argument as in \eqref{eq:Annular-3}, and the equality from $\rho_b(W) = 0$ whenever $W$ is as above.
    By the bounded overlapping, the previous two displays yield
    \[
        \Gamma\Span{u}( B(y,\sigma(A+4)r) ) \lesssim \varepsilon^{d_{\textrm{H}} - \beta} \sum_{ U \in \mathcal{U}_{\varepsilon/(16K)} } \rho(U)^p.
    \]
    The left-hand side in the previous inequality is estimated using $\PI(\beta)$
    \[
        \int_{B(y,(A+4)r)} \abs{u - u_{B(y,(A+4)r)}}^p \, \text{d}\mu \lesssim r^\beta \Gamma\Span{u}( B(y,\sigma(A+4)r) ).
    \]
    By the properties of $f$ and the definition of $u$, we have $u = 0$ on $B(x,r/2)$ and $u = 1$ on $B(y,r/2)$. This gives
    \[
        r^{d_{\textrm{H}}} \lesssim \frac{\min \{ \mu(B(x,r/2)), \mu(B(y,r/2)) \} }{2^p} \lesssim \int_{B(y,(A+4)r)} \abs{u - u_{B(y,(A+4)r)}}^p \, \text{d}\mu.
    \]
    The combination of the previous three displays yields the desired lower-bound for the $p$-mass of $\rho$, which completes the proof.
\end{proof}

The combination of Lemma \ref{lemma:Carrasco} and Lemma \ref{lemma:CBL} yields an analytic description of the conformal dimension in terms of Dirichlet spaces.

\begin{corollary}\label{cor:confdim}
    Suppose that Assumption \ref{Assumption:Confdim} holds.
    If additionally $d_{\textup{H}} = \beta$ then the conformal dimension $Q$ of $(X,d)$ is equal to $p$.
\end{corollary}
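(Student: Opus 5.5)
When $d_{\mathrm H}=\beta$, Lemma \ref{lemma:Carrasco} and Lemma \ref{lemma:CBL} say that the discrete $p$-moduli are scale invariant. Annuli have modulus bounded above and below uniformly in $x$, $r$ and $\varepsilon\le r$. Pairs of balls at relative distance at most $A$ have modulus bounded below by $M^{-1}$. The plan is to compare these facts with the known characterization of conformal dimension through critical exponents of discrete moduli, in the style of Carrasco Piaggio and Keith--Kleiner, and as used in \cite{BourK,MuruganShimizu}. That characterization requires $(X,d)$ to be compact, Ahlfors regular and of bounded turning (or uniformly perfect), all of which are part of Assumption \ref{Assumption:Confdim}. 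We then prove the two inequalities $Q\le p$ and $Q\ge p$ separately.

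\textbf{Upper bound $Q\le p$.} For $q>p$, the moduli $\Mod_q$ of annular families are at most $\Mod_p$ after normalizing, because an admissible $\rho$ may be truncated to $\rho\le 1$. More precisely, we pass to $\Mod_q$ using Hölder's inequality and the decay of admissible functions. The key input is the upper bound in Lemma \ref{lemma:Carrasco}: the annular $p$-modulus at relative scale $\varepsilon/r=2^{-n}$ is uniformly bounded. Thus the $p$-modulus does not blow up, which forces the $q$-moduli to decay for $q>p$. Carrasco Piaggio's theorem, or equivalently Keith--Kleiner's criterion $\mathrm{Confdim}=\inf\{q:\Mod_q\to 0\}$, then gives $Q\le q$ for every $q>p$, hence $Q\le p$.

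\textbf{Lower bound $Q\ge p$.} Suppose for contradiction that $\rho'$ is an $s$-Ahlfors regular metric quasisymmetric to $d$ with $s<p$. Lemma \ref{lemma:CBL} provides a uniform positive lower bound for the discrete $p$-modulus of families of curves joining two nearby balls, with diameter control. Discrete modulus with respect to coverings by balls is a quasisymmetric invariant up to constants, via \cite[Proposition 2.2]{BourK}, since quasisymmetries map ball coverings to coverings of bounded overlap comparable to balls. The lower bound therefore transfers to $(X,\rho')$. In an $s$-regular space with $s<p$, however, the function $\rho\equiv C\delta$ on a $\delta$-cover of a bounded region is admissible for curves of diameter $\gtrsim r$. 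Its $p$-mass is about $\delta^{p}\,(r/\delta)^{s}\to 0$ as $\delta\to0$. This contradicts the uniform lower bound, so $Q\ge p$.

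The main obstacle is the quasisymmetric invariance step. We must check that the images of the curve families, in particular those with the diameter constraint $Lr$ from Lemma \ref{lemma:CBL}, still contain families that connect balls of comparable size in $\rho'$. We must also check that coverings at scale $\varepsilon$ correspond to coverings at a single comparable scale in the new metric. Ahlfors regularity is not preserved scale by scale, so one works with a discrete $\varepsilon$-approximation in each metric and appeals to the comparison results in \cite{BourK}. I would cite the Keith--Kleiner/Carrasco characterization directly rather than reprove it.
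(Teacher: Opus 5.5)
Your top-level route is the paper's. With $d_{\mathrm H}=\beta$, Lemmas \ref{lemma:Carrasco} and \ref{lemma:CBL} are exactly the two halves of the combinatorial $p$-Loewner property. The paper then simply invokes the theorem that this forces the conformal dimension to equal $p$ (\cite[Theorem 1.3]{Carrasco}, or \cite[Lemma 4.2]{EBConf}). Your own derivations of the two inequalities, however, each have a gap.

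\textbf{Upper bound $Q\le p$.} Truncating to $\rho\le 1$ only gives $\Mod_q\le\Mod_p\le C$, which is boundedness. The step ``the $p$-modulus does not blow up, which forces the $q$-moduli to decay'' does not follow. Decay requires near-optimal admissible functions with small sup norm, since $\sum\rho^q\le\|\rho\|_\infty^{q-p}\sum\rho^p$, and nothing in your sketch produces them. Citing the critical-exponent characterization does not help, because it takes this decay as input. You can supply it from Lemma \ref{lemma:Carrasco} alone:
\begin{enumerate}
\item Average its admissible functions over the $N\approx\log_2(r/\varepsilon)$ dyadic layers around a cell $U_0$. For curves from $U_0$ to distance $r/4$, this gives an admissible function with $p$-mass $\lesssim N^{1-p}$ and sup $\lesssim 1/N$.
\item Use these on the $\lesssim\tau^{-p}$ cells where a near-optimal $\rho$ exceeds $\tau$, and $\min\{\rho,\tau\}$ elsewhere. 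This is still admissible, because every crossing curve has diameter $\ge r$.
\item The result is $\Mod_q^{1/q}\lesssim\tau^{(q-p)/q}+\tau^{-p}N^{(1-q)/q}$. Letting $N\to\infty$ and then $\tau\to0$ gives decay.
\end{enumerate}
Alternatively, quote the combinatorially-Loewner statement directly, as the paper does.

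\textbf{Lower bound $Q\ge p$.} Discrete modulus depends only on which cells each curve meets. So the bound of Lemma \ref{lemma:CBL} holds verbatim for $\mathcal{U}_\varepsilon$ viewed in $(X,\rho')$. The real problem is that these cells are not at a single $\rho'$-scale: their $\rho'$-diameters vary with position, with ratios that can blow up as $\varepsilon\to0$. Hence your uniform computation $\delta^p(r/\delta)^s$ does not apply to them. Also, \cite[Proposition 2.2]{BourK} compares two approximations of one space at one scale, so it does not let you pass to a uniform $\delta$-cover in $\rho'$. This is the step you flagged, and it is not closed.

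The fix is to avoid transferring covers altogether:
\begin{enumerate}
\item Take $d(x,y)=3r$ in Lemma \ref{lemma:CBL}. Then every curve in $\mathcal{C}$ has $d$-diameter $\ge r$, hence $\rho'$-diameter $\ge c>0$ by compactness.
\item Keep $\mathcal{U}_\varepsilon$ and test with $\rho(U):=\diam_{\rho'}(U)/c$. This is admissible, since a curve's $\rho'$-diameter is at most the sum of the $\rho'$-diameters of the cells it meets.
\item By quasisymmetry, each $U$ contains a $\rho'$-ball of radius comparable to $\diam_{\rho'}(U)$. So $s$-regularity and bounded overlap give $\sum_U\diam_{\rho'}(U)^s\lesssim 1$.
\item Hence $\sum_U\rho(U)^p\lesssim\max_U\diam_{\rho'}(U)^{p-s}\to0$ by uniform continuity. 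This contradicts the lower bound $M^{-1}$ when $s<p$.
\end{enumerate}
If you use the critical-exponent characterization anyway, this direction is immediate. For $q<p$, truncation gives $\Mod_q\ge\Mod_p\ge C^{-1}$ by the lower bound in Lemma \ref{lemma:Carrasco}.
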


\begin{proof}
    By Lemma \ref{lemma:Carrasco} and Lemma \ref{lemma:CBL}, the metric space $(X,d)$ is Combinatorially $Q$-Loewner in the sense of Bourdon--Kleiner \cite{BourK}. The claim now follows from a characterization of the conformal dimension due to Carrasco-Piaggio \cite[Theorem 1.3]{Carrasco}. Alternatively, see \cite[Lemma 4.2]{EBConf} for a direct statement.
 \end{proof}

\subsection{Auxiliary Sobolev spaces}\label{subsec:Aux}
The remainder of the section works in the setting of Assumption \ref{Assumption:Confdim} under the additional condition $d_\textrm{H} = \beta$. By Corollary \ref{cor:confdim}, $p$ is then equal to the conformal dimension of $(X,d)$. In order to highlight this, we replace $p$ with $Q$.

Suppose that Assumption \ref{Assumption:Confdim} and $d_{\textrm{H}}=\beta$ hold.
A collection $(\rho,\nu,\Xi,\mathcal{W})$ is called an \emph{auxiliary $Q$-Sobolev space} if the following hold.
\begin{enumerate}
    \item $\rho$ is a metric on $X$ so that it is quasisymmetric to $d$.
    \item $\nu$ is a doubling measure on $(X,\rho)$.
    \item $\Xi : L^Q(X,\nu) \to [0,\infty]$ is given by
    \[
        \Xi(f) :=  \sup_{r > 0}  
    \int_{X} \kint_{B_\rho(x,r)} \frac{\abs{f(x) - f(y)}^Q}{\nu(B_\rho(x,r))} \text{d}\nu(y)\,\text{d}\nu(x).
    \]
    \item $\mathcal{W} = L^Q(X,\nu) \cap \Xi^{-1}([0,\infty))$ and it is given the norm
    \[
  \norm{f}_{\mathcal{W}} := \norm{f}_{L^Q(X,\nu)} + \Xi(f)^{\frac{1}{Q}}.  
\]
\end{enumerate}
The $Q$-energy $\Xi$ can be understood as a variant of a Korevaar--Schoen energy. See \cite{ShimizuKS,Baudoin24} and therein references.
The results of the section show that auxiliary $Q$-Sobolev spaces are all equivalent to the each other.
We call this a \emph{``conformal invariance''} because doubling measures are invariant under quasisymmetric homeomorphisms.
The important special case where the attainment is realized is also studied.
See \cite[Section 7]{HK} and references therein for related results.
Our methods are influenced by a work of Murugan and Shimizu \cite[Section 9]{MuruganShimizu} where a similar result on the Sierpi\'nski carpet is established.

Let us remark that we often simultaneously work with two metrics $d$ and $\rho$, and two measures $\mu$ and $\nu$.
The intended one in each instance will be clear from the notation. For instance, we denote the open balls $B_d(x,r)$ and $B_\rho(x,r)$ and the integral averages $f_{A,\mu}$ and $f_{A,\nu}$.

\subsection{Conformal invariance}
In the following proposition, we only consider continuous functions to avoid the technical detail that the reference measures are different.

\begin{proposition}\label{prop:Conf-Inv}
    Suppose that Assumption \ref{Assumption:Confdim} holds, and assume $d_{\textup{H}}=\beta$.
    Let $(\rho,\nu,\Xi,\mathcal{W})$ be an auxiliary $Q$-Sobolev space. Then $\mathcal{F} \cap C(X) = \mathcal{W} \cap C(X)$. Moreover, there is a constant $C \geq 1$ such that, for all $f \in \mathcal{F} \cap C(X)$, we have
    \[
       C^{-1}\norm{f}_{\mathcal{W}} \leq \norm{f}_{\mathcal{F}} \leq C\norm{f}_{\mathcal{W}}.
    \]
\end{proposition}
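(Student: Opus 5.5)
The plan is to compare both $\mathcal{E}$ and $\Xi$ with one scale-invariant discrete $Q$-energy. The point is that with $d_{\textrm{H}}=\beta$ the factor $\varepsilon^{d_{\textrm{H}}-\beta}$ in the proofs of Lemma \ref{lemma:Carrasco} and Lemma \ref{lemma:CBL} equals $1$. For a cover $\mathcal{V}$ of $X$ by sets with bounded overlap, and a continuous $f$, put
\[
    D_{\mathcal{V}}(f) := \sum_{\substack{U,V \in \mathcal{V}\\ U \cap V \neq \emptyset}} \abs{f_U - f_V}^Q ,
\]
where the averages are taken with respect to the measure natural for the cover. I would prove:
\[
    \mathcal{E}(f) \asymp \sup_{\varepsilon}D_{\mathcal{U}_\varepsilon}(f) \asymp \sup_{\varepsilon}D_{\mathcal{U}^\rho_\varepsilon}(f) \asymp \Xi(f),
\]
where $\mathcal{U}^\rho_\varepsilon$ is a cover by $\rho$-balls of radius $\varepsilon$ with centres forming an $\varepsilon$-net, and averages are taken in $\nu$. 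The $L^Q$ parts of the norms are handled last. Since $X$ is compact and the global Poincar\'e inequality holds, $\norm{f - f_{X,\mu}}_{L^Q(\mu)}^Q \lesssim \mathcal{E}(f)$. Likewise $\Xi(f)$ at the scale $r\approx\diam_\rho X$ controls $\norm{f-f_{X,\nu}}^Q_{L^Q(\nu)}$. One then compares the constants $f_{X,\mu}$ and $f_{X,\nu}$ using $\norm{f}_{L^Q(\mu)}$, the oscillation bound, and the finiteness of the two total masses.

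\emph{Step 1 (the $\mathcal{F}$ side).} The bound $D_{\mathcal{U}_\varepsilon}(f) \lesssim \mathcal{E}(f)$ is exactly the computation in the upper-bound part of Lemma \ref{lemma:Carrasco}: apply $\PI(\beta)$ on balls of radius comparable to $\varepsilon$ and use bounded overlap. For the converse I would set $u_\varepsilon := \sum_U f_U \psi_U$, with the partition of unity from Lemma \ref{lemma:POU}. The estimate \eqref{eq:Annular-2} gives $\mathcal{E}(u_\varepsilon) \lesssim D_{\mathcal{U}_\varepsilon}(f)$. Uniform continuity of $f$ gives $u_\varepsilon \to f$ uniformly, hence in $L^Q(\mu)$. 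Weak lower semicontinuity then yields both $f\in\mathcal{F}$ and $\mathcal{E}(f)\lesssim \liminf_\varepsilon D_{\mathcal{U}_\varepsilon}(f)$.

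\emph{Step 2 (the $\Xi$ side).} Fix $r$ and use the doubling of $\nu$. The double integral defining $\Xi$ at scale $r$ is bounded above and below by $D_{\mathcal{U}^\rho_{r'}}(f)$ with $r'\asymp r$, up to terms of the same form at a comparable scale. To see this, split $X$ into the balls of $\mathcal{U}^\rho_{r}$, and write $\abs{f(x)-f(y)}\le \abs{f(x)-f_U}+\abs{f_U-f_V}+\abs{f_V-f(y)}$. The normalization by $\nu(B_\rho(x,r))$ makes each ball contribute an average, so no power of $r$ appears.

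\emph{Step 3 (main obstacle: from $d$-covers to $\rho$-covers).} Under the identity map, $d$-balls are $\rho$-quasiballs, uniformly round by quasisymmetry, but with varying $\rho$-radii. A fixed-radius $\rho$-cover is therefore not comparable to a fixed-radius $d$-cover. My first attempt is a Bourdon--Kleiner type comparison in the spirit of \cite[Proposition 2.2]{BourK} and of the curve-based admissible functions used above. I would work with the \emph{sup over all} bounded-overlap quasi-round covers whose elements are of locally comparable size. Because the exponent is the conformal one, such energies are insensitive to the choice of cover up to constants. To compare two such covers, refine both by a common quasi-round cover of smaller mesh. Then bound each difference $\abs{f_U-f_V}$ by a bounded chain of differences in the finer cover; the number of links is bounded by quasisymmetry and doubling. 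Two directions remain. The $d$-cover energy is controlled uniformly by $\mathcal{E}$ even for mixed-radius covers, by applying $\PI(\beta)$ ball by ball. The reverse direction is harder, since $\mathcal{W}$ has no Poincar\'e inequality. There I would rely on continuity of $f$ and the sup in $r$ in $\Xi$: coarse differences are telescoped into finer-scale $\rho$-differences, using Hölder with bounded chain length. This step is where I expect the real work, and if chaining fails I would instead pass through moduli. That is, bound $\mathcal{E}(f)$ by the $Q$-modulus of curve families detected by $f$, and use that discrete $Q$-modulus is quasisymmetrically invariant up to constants.
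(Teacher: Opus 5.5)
There is a genuine gap, and it lies in the change of \emph{measure}, not in the change of cover that you single out. Your only tool for bounding quantities by $\mathcal{E}$ is $\PI(\beta)$, which controls oscillations and averages with respect to $\mu$ only. But $\Xi$, your $D_{\mathcal{U}^\rho_\varepsilon}$ and $\norm{\cdot}_{L^Q(\nu)}$ are built from $\nu$, an arbitrary doubling measure on $(X,\rho)$ that is in general singular to $\mu$. (For an attaining metric with $\beta>Q$ on a geodesic space it must be, by Corollary~\ref{cor:nu-minimal} and Proposition~\ref{prop:Singular}.) The gap shows up in three places:
\begin{itemize}
\item To get $D_{\mathcal{U}^\rho_\varepsilon}(f)\lesssim\mathcal{E}(f)$, applying $\PI(\beta)$ ``ball by ball'' bounds $\abs{f_{U,\mu}-f_{V,\mu}}$. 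You need $\abs{f_{U,\nu}-f_{V,\nu}}$, and the discrepancy $\abs{f_{U,\nu}-f_{U,\mu}}$ is controlled only by $\kint_{U}\abs{f-f_{U,\mu}}\,\mathrm{d}\nu$.
\item The leftover terms $\kint_{U}\abs{f-f_{U,\nu}}^Q\,\mathrm{d}\nu$ in your Step 2 are of this same type, not ``of the same form'' as $D$.
\item $f_{X,\mu}$ and $f_{X,\nu}$ cannot be compared from the two separate oscillation bounds and $\norm{f}_{L^Q(\mu)}$ alone: a continuous $f$ can be near $1$ on most of the $\nu$-mass and near $0$ on most of the $\mu$-mass.
\end{itemize}
The missing ingredient is the two-measure Poincar\'e inequality
\[
\int_B\abs{f-f_{B,\mu}}^Q\,\mathrm{d}\nu\le C\,\nu(B)\,\Gamma\Span{f}(\sigma B).
\]
The paper imports it from \cite[Corollary 5.5]{Anttila} as \eqref{eq:TwoMeasure} and transfers it to $\rho$-balls in Lemma~\ref{lemma:TwoMeasure}. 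It then uses it for $\Xi\lesssim\mathcal{E}$ by a covering argument, and, with $B=X$, for the $L^Q$ comparisons. It is available for arbitrary doubling $\nu$ precisely because the exponent is critical ($d_{\textrm{H}}=\beta$). Establishing it requires telescoping $f(x)-f_{B,\mu}$ through \emph{all} scales down to $x$ and summing with geometric weights, using the localized energy $\Gamma\Span{f}$ and the reverse-doubling decay of $\nu$ on small balls. Bounded chains cannot do this. Refining until $\mu$- and $\nu$-averages of $f$ agree needs a mesh depending on the modulus of continuity of $f$, so your claim that the number of links is bounded fails exactly where it is needed. The modulus fallback, as stated, does not address this either: relating $\mathcal{E}(f)$ of a general $f$ to moduli would need a coarea-type formula that the axioms do not provide.

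Conversely, what you call the main obstacle is routine: passing from $d$-balls to $\rho$-balls only needs the inclusions \eqref{eq:QS-balls}. In particular, the direction $\mathcal{E}\lesssim\Xi$, which you expect to be the hard one, needs neither cover comparison nor moduli. Run your Step 1 directly on the $\rho$-cover, with $u_r=\sum_{V}f_{V,\nu}\psi_V$ and $\{\psi_V\}$ subordinate to $\{B_\rho(z,r)\}_{z\in\mathcal{N}_r}$. These balls are uniformly round $d$-quasiballs, so $\cCap(\beta)$ together with $d_{\textrm{H}}=\beta$ gives $\mathcal{E}(\psi_V)\lesssim 1$ regardless of their $d$-size. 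The computation behind \eqref{eq:Annular-2} does not care which averages serve as coefficients, so $\mathcal{E}(u_r)$ is bounded by a $\nu$-based discrete energy and hence by $\Xi(f)$. Weak lower semicontinuity then gives $f\in\mathcal{F}$ with $\mathcal{E}(f)\lesssim\Xi(f)$; this is exactly the paper's argument for that inclusion.
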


A key ingredient in the proof of Proposition \ref{prop:Conf-Inv} is a two-measure Poincar\'e inequality. In proofs below, we emphasize the reference measure by writing $f_{A,\mu}$ when computing averages.

\begin{lemma}\label{lemma:TwoMeasure}
    Assume the hypotheses of Proposition \ref{prop:Conf-Inv}.
    There are $C,\kappa \geq 1$ such that, for all $x \in X$, $s > 0$ and $f \in \mathcal{F} \cap C(X)$, we have
    \[
         \int_{B_\rho(x,s)} \abs{f - f_{B_\rho(x,s),\mu}}^Q \, \textup{d}\nu \leq C \nu(B_\rho(x,s)) \int_{B_\rho(x,\kappa s)} \, \textup{d}\Gamma\Span{f}.
    \]
\end{lemma}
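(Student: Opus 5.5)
The plan is a chaining (telescoping) argument for the continuous function $f$: bound $\abs{f(z) - f_{B_\rho(x,s),\mu}}$ pointwise by a sum of oscillations over a geometric sequence of balls shrinking to $z$, control each oscillation by $\PI(\beta)$ with $\beta = d_{\textrm{H}} = Q$, and then integrate against $\nu$. Decay of $\nu$ along the chain should absorb the loss in the sum.

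\emph{Comparing balls.} Since $\rho$ is $\eta$-quasisymmetric to $d$, every ball $B_\rho(z,t)$ is sandwiched between $d$-balls: $B_d(z,r) \subseteq B_\rho(z,t) \subseteq B_d(z,Hr)$, with $H$ depending only on $\eta$. Because $\mu$ is $Q$-Ahlfors regular, these $d$-balls have comparable $\mu$-measure. With $p = \beta = d_{\textrm{H}} = Q$, the Poincar\'e inequality \eqref{eq:PI} is scale invariant:
\[
\kint_{B_d} \abs{f - f_{B_d,\mu}}^Q \, \text{d}\mu \lesssim \Gamma\Span{f}(\sigma B_d).
\]
Combining the sandwich with this, for nested $\rho$-balls $B' \subseteq B$ of comparable radii we get
\[
\abs{f_{B',\mu} - f_{B,\mu}} \lesssim \Gamma\Span{f}(\kappa_0 B)^{1/Q},
\]
where $\kappa_0 B$ is a $\rho$-ball of comparable radius and $\kappa_0$ depends on $\eta$, $\sigma$ and the Ahlfors regularity constants.

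\emph{Telescoping.} Fix $z \in B_0 := B_\rho(x,s)$. Set $B_0(z) := B_\rho(z,2s) \supseteq B_0$ and $B_i(z) := B_\rho(z,2^{-i}s)$ for $i \geq 1$. Since $f$ is continuous, $f_{B_i(z),\mu} \to f(z)$ for every $z$. Write $a_i(z) := \Gamma\Span{f}(\kappa_0 B_i(z))^{1/Q}$. The comparison step above then gives
\[
\abs{f(z) - f_{B_0,\mu}} \lesssim \sum_{i \geq 0} a_i(z).
\]

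\emph{Weighted H\"older and Fubini.} Since $(X,\rho)$ is connected and $\nu$ is doubling, $\nu$ is reverse doubling: $\nu(B_i(z)) \leq C c^i \nu(B_0)$ for some $c < 1$. Set $\lambda_i := (\nu(B_i(z))/\nu(B_0))^{\theta}$ with $0 < \theta < 1/Q$. H\"older's inequality gives
\[
\Big(\sum_i a_i\Big)^Q \leq \Big(\sum_i a_i^Q \lambda_i^{-Q}\Big)\Big(\sum_i \lambda_i^{Q'}\Big)^{Q-1}.
\]
The second factor is bounded by the geometric decay of $\lambda_i$. Integrating over $z \in B_0$ against $\nu$ and applying Fubini to each term gives
\[
\int_{B_0} \Gamma\Span{f}(\kappa_0 B_i(z)) \, \nu(B_i(z))^{-\theta Q}\, \text{d}\nu(z) \lesssim \nu(B_i^*)^{1-\theta Q}\, \Gamma\Span{f}(\kappa B_0),
\]
for a suitable $\kappa$. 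Here we use that $w \in \kappa_0 B_i(z)$ forces $z$ into a $\rho$-ball around $w$ of comparable radius, and that $\nu$ is doubling, so $\nu(B_i(z))$ is comparable to the $\nu$-measure $\nu(B_i^*)$ of that ball. Multiplying by $\nu(B_0)^{\theta Q}$ and using $\nu(B_i^*) \lesssim c^i \nu(B_0)$ together with $1 - \theta Q > 0$, the sum over $i$ is $\lesssim \nu(B_0)\,\Gamma\Span{f}(\kappa B_0)$. This is the claim.

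The main obstacle is this last step. The Poincar\'e inequality is at the borderline exponent $p = d_{\textrm{H}}$, so a naive sum $\sum_i a_i$ carries no decay. The gain must come from reverse doubling of $\nu$ through the weights $\lambda_i$. One must also check carefully that the Fubini swap and the ball comparisons hold with constants uniform in $z$ and $i$; this uses the quasisymmetry and the doubling property of $\nu$ in the $\rho$-metric.
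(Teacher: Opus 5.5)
Your proof is correct, but it takes a different route from the paper. The paper does not prove a two-measure inequality itself. It imports the $d$-ball version $\int_{B_d(x,r)}\abs{f-f_{B_d(x,r),\mu}}^Q\,\mathrm{d}\nu \le C\,\nu(B_d(x,r))\,\Gamma\Span{f}(B_d(x,\sigma r))$ from \cite[Corollary 5.5]{Anttila}. Its only work is the transfer to $\rho$-balls, using the inclusions \eqref{eq:QS-balls}, a triangle/Jensen split, the doubling of $\nu$, and $\PI(\beta)$ for the $\mu$-average term.

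You instead prove the inequality from scratch, directly in $\rho$-balls, by chaining. Continuity of $f$ gives $f_{B_i(z),\mu}\to f(z)$ at every point, which matters because $\nu$ may be singular with respect to $\mu$. The scale invariance of $\PI(\beta)$ (from $\beta=d_{\textrm{H}}$) bounds each link. Reverse doubling of $\nu$, which follows from connectedness, supplies summability through the Fubini step.

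The paper's route is shorter and modular. Yours is self-contained and shows exactly where continuity, connectedness and $\beta=d_{\textrm{H}}$ enter. Your weighted H\"older step can be replaced by Minkowski's inequality in $L^Q(\nu)$: Fubini alone already gives $\int_{B_0} a_i^Q\,\mathrm{d}\nu \lesssim c^i\,\nu(B_0)\,\Gamma\Span{f}(\kappa B_0)$.

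Three bookkeeping corrections, none of which breaks the argument:
\begin{enumerate}
\item Here $p=Q$ and $\beta=d_{\textrm{H}}$, but $d_{\textrm{H}}\neq Q$ in general. The measure $\mu$ is $d_{\textrm{H}}$-Ahlfors regular, not $Q$-regular, and in the application $d_{\textrm{H}}=\beta>Q$. Your argument only uses $\beta=d_{\textrm{H}}$ for scale invariance and $Q$ as the H\"older exponent, so it is unaffected; the ``borderline'' is $\beta=d_{\textrm{H}}$, not $p=d_{\textrm{H}}$.
\item The bound $\nu(B_i(z))\le Cc^i\nu(B_0)$ needs $s\lesssim\diam_\rho(X)$. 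You may assume this, since larger balls coincide with $X$.
\item The ball $B_i^*$ in your Fubini step depends on the integration variable $w$. The estimate should be stated as uniform over $w\in\kappa B_0$.
\end{enumerate}
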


\begin{proof}
    It follows from \cite[Corollary 5.5]{Anttila} that, for all $x \in X$, $s > 0$ and $f \in \mathcal{F} \cap C(X)$, we have
    \begin{equation}\label{eq:TwoMeasure}
        \int_{B_d(x,r)} \abs{f - f_{B_d(x,r),\mu}}^Q \, \textup{d}\nu \leq C \nu(B_d(x,r)) \int_{B_d(x,\sigma r)} \, \text{d}\Gamma\Span{f}.
    \end{equation}
    The constants $C,\sigma$ are independent of $x \in X$, $s > 0$ and $f \in \mathcal{F}$. The hypotheses of \cite[Corollary 5.5]{Anttila} follow from Assumption \ref{Assumption:Confdim} and $d_{\textrm{H}}=\beta$.
    This is not yet the desired two-measure Poincar\'e inequality because the metric is not the correct one.
    To fix this, we use the fact that quasisymmetries almost preserves open balls.

    Suppose that $d$ is $\eta$-quasisymmetric to $\rho$, and let $x \in X$, $s > 0$ and $L \geq 1$.
    We first show that
    \begin{equation}\label{eq:QS-balls}
        B_\rho(x,s) \subseteq B_d(x,2r) \text{ and } B_d(x,2L r) \subseteq B_\rho(x,\kappa s)
    \end{equation}
    where
    \[
        r := \sup_{y \in B_\rho(x,s)} d(x,y) \text{ and } \kappa := \frac{1}{\eta^{-1}\left( \frac{1}{4L} \right)}.
    \]
    The first inclusion in \eqref{eq:QS-balls} is obvious.
    Let $y \in B_d(x,2L r)$ and take any $z \in B_\rho(x,s)$ so that $d(x,z) \geq r/2$.
    By using the fact that $d$ is $\eta$-quasisymmetric to $\rho$ we have
    \[
        \frac{1}{4L} \leq \frac{d(x,z)}{d(x,y)} \leq \eta\left( \frac{\rho(x,z)}{\rho(x,y)} \right) < \eta\left( \frac{s}{\rho(x,y)} \right).
    \]
    Since $\eta$ is an increasing homeomorphism, we get $\rho(x,y) \leq \kappa s$, and hence the second inclusion in \eqref{eq:QS-balls} holds.

    Let $x,s$ and $f$ be as in the claim, and $r,\kappa$ as in \eqref{eq:QS-balls} for $L = \sigma$. We have
    \begin{align*}
        & \quad \, \int_{B_\rho(x,s)} \abs{f - f_{B_\rho(x,s),\mu}}^Q \, \text{d}\nu \leq \int_{B_d(x,2r)} \abs{f - f_{B_\rho(x,s),\mu}}^Q \, \text{d}\nu\\
        \lesssim & \quad \int_{B_d(x,2r)} \abs{f - f_{B_d(x,2r),\mu}}^Q \, \text{d}\nu + \nu(B_\rho(x,s)) \kint_{B_\rho(x,s)} \abs{f - f_{B_d(x,2r),\mu}}^Q\, \text{d}\mu\\
         \lesssim & \quad \int_{B_d(x,2r)} \abs{f - f_{B_d(x,2r),\mu}}^Q \, \text{d}\nu + \nu(B_\rho(x,s)) \kint_{B_d(x,2r)} \abs{f - f_{B_d(x,2r),\mu}}^Q\, \text{d}\mu.
    \end{align*}
    We use \eqref{eq:QS-balls} in the first row, $\abs{a+b}^Q \leq 2^{Q-1}(\abs{a} + \abs{b})$ and Jensen's inequality in the second row, and the doubling property of $\nu$ and \eqref{eq:QS-balls} in the third.
    The two terms on the last row are estimated separately.
    By applying \eqref{eq:TwoMeasure} to the first term and $\PI(\beta)$ to the second term, both of them are bounded by
    \[
        \nu(B_\rho(x,s)) \int_{B_d(x,2\sigma r)} \text{d}\Gamma\Span{f} \leq \nu(B_\rho(x,s)) \int_{B_\rho(x,\kappa s)} \text{d}\Gamma\Span{f}.
    \]
    When estimating the second term, we use the Ahlfors regularity and $d_{\textrm{H}}=\beta$.
    This concludes the proof.
\end{proof}

\begin{proof}[Proof of Proposition \ref{prop:Conf-Inv}]
    The equality of the function spaces is verified first. Let $f \in \mathcal{F} \cap C(X)$ and $r > 0$, and let $\mathcal{N}_r \subseteq X$ be an $r$-net in the metric $\rho$. By the two-measure Poincar\'e inequality in Lemma \ref{lemma:TwoMeasure} and the doubling property of $\nu$,
    \begin{align*}
        & \quad \int_X \kint_{B_\rho(x,r)} \frac{\abs{f(x) - f(y)}^Q}{\nu(B_\rho(x,r))} \,\text{d}\nu(y) \, \text{d}\nu(x)\\
        \lesssim & \quad \sum_{z \in \mathcal{N}_r} \int_{B_\rho(z,2r)} \kint_{B_\rho(z,2r)} \frac{\abs{f(x) - f(y)}^Q}{\nu(B_\rho(z,r))} \, \text{d}\nu(y)\, \text{d}\nu(x)\\
        \lesssim & \quad \sum_{z \in \mathcal{N}_r} \int_{B_\rho(z,2r)} \abs{f - f_{B_{\rho}(z,2r),\mu}}^Q \, \text{d}\nu \lesssim \sum_{z \in \mathcal{N}_r} \int_{B_\rho(z,2\lambda r)} \, \text{d}\Gamma\Span{f} \lesssim \mathcal{E}(f).
    \end{align*}
    We get $\Xi(f) \lesssim \mathcal{E}(f) < \infty$ by taking the supremum over $r > 0$, which yields the first inclusion $\mathcal{F} \cap C(X) \subseteq \mathcal{W} \cap C(X)$.

    Then assume $f \in \mathcal{W} \cap C(X)$. For $r > 0$ let $\mathcal{N}_r$ be as above. By combining Lemma \ref{lemma:POU} with \eqref{eq:QS-balls} we can construct a partition of unity
    $\{ \psi_V \}_{V \in \mathcal{V}_r}$ subordinate to $\mathcal{V}_r := \{ B_\rho(z,r) \}_{z \in \mathcal{N}_r}$ satisfying the following.
    \begin{enumerate}
        \vspace{2pt}
        \item $\sum_{V \in \mathcal{V}_r} \psi_{V}(x) = 1$ and $0 \leq \psi_{V}(x)\leq 1$ for all $V \in \mathcal{V}_r$ and $x \in X$.
        \vspace{2pt}
        \item It holds for all $V \in \mathcal{V}_r$ that $\psi_{V}|_{X \setminus 2V} = 0$.
        \vspace{2pt}
        \item There is $C \geq 1$ such that, for every $V \in \mathcal{V}_r$, we have $\mathcal{E}(\psi_V) \leq C$.
    \end{enumerate}
    Note that the last condition is a consequence of $d_{\textrm{H}} = \beta$.
    Now, consider the approximations
    \[
        f_r := \sum_{V \in \mathcal{V}_r} \left( \kint_{V} f\, \text{d}\nu \right) \psi_V.
    \]
    By using a similar computation as in \eqref{eq:Annular-2}, their energies can be estimated by
    \begin{align*}
        \mathcal{E}(f_r)
        & \lesssim \sum_{V \in \mathcal{V}_\varepsilon} \mathcal{E}(\psi_V)  \int_{8V} \kint_{8V} \frac{\abs{f(x) - f(y)}^Q}{\nu(V)} \, \text{d}\nu(x)\text{d}\nu(y)\\
        & \lesssim \sum_{V \in \mathcal{V}_\varepsilon} \int_{8V} \kint_{B_\rho(x,16r)} \frac{\abs{f(x) - f(y)}^Q}{\nu(B_\rho(x,16r))} \, \text{d}\nu(x)\text{d}\nu(y)\\
        & \lesssim \int_X \kint_{B_\rho(x,16r)} \frac{\abs{f(x) - f(y)}^Q}{\nu(B_\rho(x,16r))} \, \text{d}\nu(x)\text{d}\nu(y) \leq \Xi(f).
    \end{align*}
    In the second row we used $\mathcal{E}(\psi_V) \lesssim 1$.
    Now, since $f_r \to f$ uniformly as $r \downarrow 0$, it follows from the weak lower semicontinuity that $f \in \mathcal{F}$ and that $\mathcal{E}(f) \lesssim \Xi(f)$. This proves the converse inclusion.

    Lastly, we check the comparability of the norms. Note that we have already verified the comparability of the energies $\mathcal{E}$ and $\Xi$. We therefore only need to compute estimates for the $L^Q$-norms. Let us note that the comparability of the $L^Q$-norms themselves is false.
    Let $f \in \mathcal{F} \cap C(X) = \mathcal{W} \cap C(X)$. By taking $B_d(x,r) = X$ in \eqref{eq:TwoMeasure}, we have the estimate
    \begin{align*}
        \left(\int_X \abs{f}^Q \, \text{d}\nu\right)^{\frac{1}{Q}} & \leq \left(\int_X \abs{f - f_{X,\mu}}^Q \, \text{d}\nu\right)^{\frac{1}{Q}} + \nu(X)^{\frac{1}{Q}} \left| \kint_X f\, \text{d}\mu \right|\\
        & \lesssim \mathcal{E}(f)^{\frac{1}{Q}} + \left(\int_{X} \abs{f}^Q \, \text{d}\mu\right)^{\frac{1}{Q}}.
    \end{align*}
    This shows the first inequality.
    For the second, we use similar computations
    \begin{align*}
        \left(\int_X \abs{f}^Q \, \text{d}\mu\right)^{\frac{1}{Q}} \lesssim \left(\int_X \abs{f - f_{X,\nu}}^Q \, \text{d}\mu\right)^{\frac{1}{Q}} +  \kint_X \abs{f}^Q\, \text{d}\nu
    \end{align*}
    and 
    \begin{align*}
        \left(\int_X \abs{f - f_{X,\nu}}^Q \, \text{d}\mu\right)^{\frac{1}{Q}} & \lesssim \left(\int_X \abs{f - f_{X,\mu}}^Q \, \text{d}\mu\right)^{\frac{1}{Q}} + \left(\int_X \abs{f - f_{X,\mu}}^Q \, \text{d}\nu\right)^{\frac{1}{Q}} \lesssim \Xi(f).
    \end{align*}
    In the second display we used $\PI(\beta)$, \eqref{eq:TwoMeasure}, and the comparability of $\mathcal{E}$ and $\Xi$.
    This concludes the proof.
\end{proof}

We also show a localized comparability of the $\mathcal{E}$ and $\Xi$.

\begin{proposition}\label{prop:Conf-Inv-local}
    Suppose that Assumption \ref{Assumption:Confdim} holds and assume $d_{\textup{H}}=\beta$.
    There is a constant $C \geq 1$ such that the following holds.
    For every $f \in \mathcal{F} \cap C(X)$ and any non-empty open set $U \subseteq X$ we have
    \begin{align*}
        C^{-1}\Gamma\Span{f}(U) \leq \sup_{O \Subset U} \limsup_{r \downarrow 0} \int_O \kint_{B_\rho(x,r)} \frac{\abs{f(x) - f(y)}^Q}{\nu(B_\rho(x,r))}  \, \textup{d}\nu(y) \, \textup{d}\nu(x) \leq C\Gamma\Span{f}(U).
    \end{align*}
    The supremum in the middle is taken over all non-empty open subsets $O$ whose closure is contained in $U$.
\end{proposition}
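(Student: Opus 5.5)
The plan is to localize the two computations in the proof of Proposition \ref{prop:Conf-Inv}. Write
\[
I_r(O) := \int_O \kint_{B_\rho(x,r)} \frac{\abs{f(x)-f(y)}^Q}{\nu(B_\rho(x,r))}\,\textup{d}\nu(y)\,\textup{d}\nu(x).
\]
I would prove the upper bound first. Fix an open $O \Subset U$. Since $X$ is compact, $\delta := \dist_\rho(O, X\setminus U) > 0$. Let $r$ be small compared to $\delta$, so that $2\kappa r < \delta$ with $\kappa$ from Lemma \ref{lemma:TwoMeasure}. Take an $r$-net $\mathcal{N}_r$ in the metric $\rho$ and keep only those $z$ with $B_\rho(z,r)\cap O \neq \emptyset$. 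Then $I_r(O)$ is bounded, exactly as in the first display of the proof of Proposition \ref{prop:Conf-Inv}, by a sum over these $z$ of $\int_{B_\rho(z,2r)}\abs{f-f_{B_\rho(z,2r),\mu}}^Q\,\textup{d}\nu$. Lemma \ref{lemma:TwoMeasure} bounds each term by $C\,\Gamma\Span{f}(B_\rho(z,2\kappa r))$. The balls $B_\rho(z,2\kappa r)$ lie inside $U$ and have bounded overlap, which follows from the doubling property of $\nu$ on $(X,\rho)$. Summing gives $I_r(O)\lesssim \Gamma\Span{f}(U)$ uniformly in small $r$. Taking $\limsup_r$ and then $\sup_O$ gives the right-hand inequality.

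For the lower bound, fix an open $O\Subset U$; by inner regularity of $\Gamma\Span{f}$ it suffices to bound $\Gamma\Span{f}(O)$. Choose an intermediate open set $O'$ with $O\Subset O'\Subset U$. As in the proof of Proposition \ref{prop:Conf-Inv}, form the approximations $f_r := \sum_{V\in\mathcal{V}_r} f_{V,\nu}\,\psi_V$ with the partition of unity subordinate to $\rho$-balls of radius $r$. For $r$ small, strong locality and the triangle inequality of $\Gamma$ apply on each $V$ meeting $O$, as in \eqref{eq:Annular-2}, using only the balls $V$ with $8V\subseteq O'$. Together with $\mathcal{E}(\psi_V)\lesssim 1$ (which uses $d_{\textup{H}}=\beta$) this gives
\[
\Gamma\Span{f_r}(O) \lesssim \int_{O'} \kint_{B_\rho(x,16r)}\frac{\abs{f(x)-f(y)}^Q}{\nu(B_\rho(x,16r))}\,\textup{d}\nu(y)\,\textup{d}\nu(x) = I_{16r}(O').
\]
Since $f$ is continuous, $f_r\to f$ uniformly, hence in $L^Q(X,\mu)$. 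Moreover $\sup_r \mathcal{E}(f_r)\lesssim \Xi(f)<\infty$ by Proposition \ref{prop:Conf-Inv}. Weak lower semicontinuity applied to the Borel set $O$ then yields $\Gamma\Span{f}(O)\le\liminf_r\Gamma\Span{f_r}(O)\lesssim \limsup_{r} I_r(O')$. Since $O'\Subset U$, this is at most the supremum in the middle. Finally let $O\uparrow U$.

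I expect the main obstacle to be the localization of the energy estimate for $f_r$. The bound \eqref{eq:Annular-2} was derived globally, so I must check that $\Gamma\Span{f_r}$ restricted to a ball $V$ only involves the coefficients $f_{W,\nu}$ with $2W\cap V\neq\emptyset$. It must also be bounded by a double integral over a fixed dilate of $V$. Both points hold because $\Gamma\Span{f_r}(V)^{1/Q}\le \sum_W \abs{f_{W,\nu}-f_{V,\nu}}\,\mathcal{E}(\psi_W)^{1/Q}$ by strong locality, and because the comparison of averages is controlled through Jensen's inequality and the doubling property of $\nu$. A secondary point is that the constant $16$ in $I_{16r}$ is harmless, since we take a $\limsup$ as $r\downarrow 0$.
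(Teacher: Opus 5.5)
Your proposal is correct and follows the paper's proof essentially step by step. The upper bound is the same localized covering argument with Lemma \ref{lemma:TwoMeasure} and bounded overlap of the $\rho$-balls inside $U$, and the lower bound is the same localization of the approximations $f_r$ to $O\Subset O'\Subset U$ together with weak lower semicontinuity; your inner-regularity limit $O\uparrow U$ simply replaces the paper's choice of $O$ with $\Gamma\Span{f}(U)\le 2\Gamma\Span{f}(O)$.

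One cosmetic slip, inherited from the paper's display in the proof of Proposition \ref{prop:Conf-Inv}: the terms of your covering sum should be $\nu(B_\rho(z,2r))^{-1}\int_{B_\rho(z,2r)}\abs{f-f_{B_\rho(z,2r),\mu}}^Q\,\textup{d}\nu$. It is this normalized quantity that Lemma \ref{lemma:TwoMeasure} bounds by $C\,\Gamma\Span{f}(B_\rho(z,2\kappa r))$.
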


\begin{proof}
    Let $f$ and $U$ be as in the claim, and also let $\lambda$, $\mathcal{N}_r$, $\mathcal{V}_r$ and $\{ \psi_V \}_{V \in \mathcal{V}_r}$ be as in the proof of Proposition \ref{prop:Conf-Inv}.

    The second inequality is considered first. Let $O \Subset U$ and take $r > 0$ small enough so that, for any $V \in \mathcal{V}_r$, $V \cap O \neq \emptyset$ implies $\lambda V \subseteq U$. By a similar covering argument as in the proof of Proposition \ref{prop:Conf-Inv}, we have
    \begin{align*}
        & \quad \int_O \kint_{B_\rho(x,r)} \frac{\abs{f(x)-f(y)}^Q}{\nu(B_\rho(x,r))} \, \text{d}\nu(y)\, \text{d}\nu(x)\\
        \leq & \quad \sum_{\substack{ V \in \mathcal{V}_r \\ V \cap O \neq \emptyset } } \int_{V} \kint_{B_\rho(x,r)} \frac{\abs{f(x)-f(y)}^Q}{\nu(B_\rho(x,r))} \, \text{d}\nu(y)\, \text{d}\nu(x) \lesssim \Gamma\Span{f}(U).
    \end{align*}
    The claimed inequality follows by sending $r \downarrow 0$.

    Then we prove the remaining inequality.
    By the Borel regularity of $\Gamma\Span{f}$, there is an open subset $O \Subset U$ such that $\Gamma\Span{f}(U) \leq 2\Gamma\Span{f}(O)$. We also take another open set $\widetilde{O}$ so that $O \Subset \widetilde{O} \Subset U$.
    Now, for every small enough $r > 0$, $V \cap O \neq \emptyset$ implies $8 V \subseteq \widetilde{O}$, for every $V \in \mathcal{V}_\varepsilon$. Then, if $f_r$ is as given in the proof of Proposition \ref{prop:Conf-Inv}, its energy can be estimated
    \[
        \Gamma\Span{f_r}(U) \lesssim \int_{\widetilde{O}} \kint_{B_\rho(x,16 r)} \frac{\abs{f(x) - f(y)}^Q}{\nu(B_\rho(x,\varepsilon))}  \, \textup{d}\nu(y) \textup{d}\nu(x).
    \]
    This follows from a similar argument as in the proof of Proposition \ref{prop:Conf-Inv}.
    The remaining inequality in the claim now follows from the weak lower semicontinuity by sending $r \downarrow 0$.
    
\end{proof}

\subsection{On the Attainment}
This section concludes with a consequence of the attainment of conformal dimension.
We prove that the Hausdorff measure of an attaining metric is a minimal energy dominant measure.
An analogous result on the Sierpi\'nski carpet was obtained in \cite[Theorem 1.7]{MuruganShimizu}, and we use a similar method. 

\begin{corollary}\label{cor:nu-minimal}
    Suppose that Assumption \ref{Assumption:Confdim} and $d_{\textup{H}} = \beta$ hold. Let $\rho$ be a metric on $X$, quasisymmetric to $d$, so that $(X,\rho)$ is $Q$-Ahlfors regular.
    Then $(X,\rho)$ is $Q$-Loewner. Moreover, if $\nu$ is the $Q$-Hausdorff measure of $(X,\rho)$, then it is a minimal energy dominant measure of $(X,d,\mu,\mathcal{E},\mathcal{F},\Gamma)$.
\end{corollary}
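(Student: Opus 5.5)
The plan is to prove the Loewner property first, and then the minimality of $\nu$; the second part will rest on the localized comparability in Proposition \ref{prop:Conf-Inv-local}.

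\emph{Loewner property.} Since $(X,\rho)$ is $Q$-Ahlfors regular, $\nu$ is doubling on $(X,\rho)$, so $(\rho,\nu,\Xi,\mathcal{W})$ is an auxiliary $Q$-Sobolev space. By Corollary \ref{cor:confdim}, $Q=p$. To obtain the $(1,Q)$-Poincar\'e inequality on $(X,\rho,\nu)$, take $f$ Lipschitz in $\rho$. Ahlfors regularity gives $\Xi(f)<\infty$, and the same bound holds on every ball. The local form of Proposition \ref{prop:Conf-Inv-local} then yields
\[
\Gamma\langle f\rangle(U)\lesssim \int_U \Lip_\rho(f)^Q\,\text{d}\nu ,
\]
because the Korevaar--Schoen quotient $\kint_{B_\rho(x,r)}|f(x)-f(y)|^Q\,\text{d}\nu(y)/\nu(B_\rho(x,r))$ is bounded by $r^Q/\nu(B_\rho(x,r))\cdot\sup_{B_\rho(x,r)}(\text{local Lipschitz ratio})^Q\sim \Lip f(x)^Q$ as $r\downarrow0$, via dominated or reverse Fatou convergence. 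Combining this with the two-measure Poincar\'e inequality of Lemma \ref{lemma:TwoMeasure} gives
\[
\int_{B_\rho(x,s)}\bigl|f-f_{B_\rho(x,s),\mu}\bigr|^Q\,\text{d}\nu\lesssim s^Q\int_{B_\rho(x,\kappa s)}\Lip_\rho(f)^Q\,\text{d}\nu .
\]
Replacing the $\mu$-average by the $\nu$-average costs only a constant. The left side is then the $\nu$-oscillation on the ball, and H\"older's inequality gives the $(1,Q)$-Poincar\'e inequality. One caveat: Lemma \ref{lemma:TwoMeasure} needs $f\in\mathcal{F}\cap C(X)$. This holds because $\Xi(f)<\infty$ and Proposition \ref{prop:Conf-Inv} give $\mathcal{W}\cap C(X)=\mathcal{F}\cap C(X)$.

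\emph{Energy dominance of $\nu$.} Let $f\in\mathcal{F}\cap C(X)$ and let $A$ be a Borel set with $\nu(A)=0$. By outer regularity, pick open $U\supseteq A$ with $\nu(U)$ small. Proposition \ref{prop:Conf-Inv-local} bounds $\Gamma\langle f\rangle(U)$ by the $\limsup$ of the Korevaar--Schoen integrals over $O\Subset U$. I need this to be small when $\nu(U)$ is small; this requires uniform integrability of the densities
\[
x\mapsto\kint_{B_\rho(x,r)}\frac{|f(x)-f(y)|^Q}{\nu(B_\rho(x,r))}\,\text{d}\nu(y).
\]
I expect this to be the main obstacle. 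My first approach is to approximate $f$ in $\mathcal{F}$ by functions that are Lipschitz in $\rho$, using the discrete convolutions $f_r$ from the proof of Proposition \ref{prop:Conf-Inv}, built with a Lipschitz partition of unity in $(X,\rho)$. For these, the density is bounded by $\Lip^Q$, which gives $\Gamma\langle f_r\rangle\ll\nu$ with density $\lesssim\Lip_\rho(f_r)^Q$. Absolute continuity should then pass to $f$ by weak lower semicontinuity together with a Mazur/Clarkson-type argument giving strong convergence $\Gamma\langle f_r\rangle\to\Gamma\langle f\rangle$. Alternatively, I would invoke the Loewner structure: by Cheeger theory, $\Gamma\langle f\rangle\simeq |Df|_\rho^Q\,\nu$ for Lipschitz $f$. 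Finally, $\mathcal{F}\cap C(X)$ is dense in $\mathcal{F}$, by the same partition-of-unity approximations. Lower semicontinuity then extends $\Gamma\langle f\rangle\ll\nu$ to all of $\mathcal{F}$.

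\emph{Minimality.} Let $\Lambda$ be any energy dominant measure and $A$ a Borel set with $\Lambda(A)=0$; I must show $\nu(A)=0$. Suppose instead $\nu(A)>0$, and take a $\nu$-density point $x$ of $A$ with small scale $s$. Let $f$ be the $\rho$-distance function $f(y)=\rho(x,y)$, truncated; it lies in $\mathcal{F}\cap C(X)$ by the first step. The lower bound in Proposition \ref{prop:Conf-Inv-local} should give $\Gamma\langle f\rangle\gtrsim\nu$ on $B_\rho(x,s)$, because the Korevaar--Schoen density of a distance function is $\gtrsim1$ pointwise by Ahlfors regularity. Upgrading this to $\Gamma\langle f\rangle(A\cap B)>0$ again needs a density or Lebesgue-differentiation argument, which I would carry out via the comparability $\Gamma\langle f\rangle\simeq\Lip^Q\nu$ from the Cheeger structure. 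This contradicts $\Gamma\langle f\rangle\ll\Lambda$. Hence $\nu\ll\Lambda$, so $\nu$ is minimal energy dominant.
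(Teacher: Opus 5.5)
There is a genuine gap in the minimality step. You claim that the Korevaar--Schoen density of $y\mapsto\rho(x_0,y)$ is $\gtrsim 1$ pointwise ``by Ahlfors regularity''. That is false unless $\rho$ has a length structure, and the hypotheses of the corollary do not supply one. Here is an example. Let $\rho$ be admissible and set $h:=\phi(\rho(x_0,\cdot))$, where $\phi(t)=-3t$ for $t\le s/2$ and $\phi(t)=t-2s$ for $t\ge s/2$. Then $\rho'(a,b):=\rho(a,b)+|h(a)-h(b)|$ satisfies $\rho\le\rho'\le 4\rho$, so $\rho'$ is again Ahlfors regular and quasisymmetric to $d$. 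Yet $\rho'(x_0,y)=2s$ whenever $s/2\le\rho(x_0,y)\le s$. On this nonempty open shell (nonempty because $X$ is connected), both the Korevaar--Schoen density and $\Lip_{\rho'}$ of $\rho'(x_0,\cdot)$ vanish. A multi-scale version of the same construction, with slopes $1$ and $-4$ on alternating shells at scales $s_k\downarrow 0$, produces such shells, of definite relative measure, inside every small ball around $x_0$. So neither Ahlfors regularity nor your fallback comparability $\Gamma\langle f\rangle\simeq\Lip^Q\nu$ gives the pointwise lower bound your argument rests on.

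The missing idea, which is how the paper proceeds, is to use the Loewner property you have already established. A Loewner space is quasiconvex, so $\rho$ is bi-Lipschitz equivalent to a geodesic metric. Replacing $\rho$ by that metric preserves Ahlfors regularity and quasisymmetry to $d$, and it changes $\nu$ only by a density bounded above and below, so minimality is unaffected. Now assume $\rho$ is geodesic, fix $x\neq x_0$ and $\varepsilon<\rho(x_0,x)/2$, and pick $z$ on a geodesic from $x_0$ to $x$ with $\rho(z,x)=\varepsilon/4$. Then $B_\rho(z,\varepsilon/8)\subseteq B_\rho(x,\varepsilon)$, and $|f(x)-f(y)|\ge\varepsilon/8$ on $B_\rho(z,\varepsilon/8)$. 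Ahlfors regularity therefore bounds the density below at every $x\neq x_0$. Proposition \ref{prop:Conf-Inv-local} then gives $\Gamma\langle f\rangle(U)\gtrsim\nu(U)$ for every open $U$, and outer regularity yields $\nu\lesssim\Gamma\langle f\rangle\ll\Lambda$ globally. No density point, contradiction argument, or Cheeger theory is needed.

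The rest of your plan is sound. Your proof of the Loewner property takes a different route from the paper, which simply invokes an external result that the combinatorial Loewner property plus attainment gives Loewner. You instead derive the $(1,Q)$-Poincar\'e inequality directly from Lemma \ref{lemma:TwoMeasure} and the upper bound in Proposition \ref{prop:Conf-Inv-local}, using reverse Fatou for $\rho$-Lipschitz $f$. This is correct and more self-contained. Your energy-dominance step matches the paper's, except that no Mazur/Clarkson step is needed, and Clarkson is not assumed in this framework anyway. Axiom (7) holds for every Borel set, so $\nu(A)=0$ gives $\Gamma\langle f\rangle(A)\le\liminf_r\Gamma\langle f_r\rangle(A)=0$ directly.
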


\begin{proof}
    Note that, in the hypotheses, we assume that $(X,d)$ attains its conformal dimension and that $\rho$ is an attaining metric.
    During this proof, we consider the auxiliary $Q$-Sobolev space $(\rho,\nu,\Xi,\mathcal{W})$ where $\nu$ is the $Q$-Hausdorff measure of $(X,\rho)$.
    
    As noted in the proof of Corollary \ref{cor:confdim}, $(X,d)$ is combinatorially Loewner.
    Since $\rho$ is an attaining metric, $(X,\rho)$ is $Q$-Loewner. See \cite[Section 1.6]{CEB}.

    It remains to prove that $\nu$ is minimal energy dominant.
    Let $\mathcal{N}_r$ and $\mathcal{V}_r$ be as in the proof of Proposition \ref{prop:Conf-Inv}.
    By the $Q$-Ahlfors regularity and Proposition \ref{prop:Conf-Inv}, it holds for every $f \in \mathcal{F} \cap C(X)$ that
    \[
        \mathcal{E}(f) \sim \sup_{r > 0} \int_X \kint_{B_\rho(x,r)} \frac{\abs{f(x) - f(y)}^Q}{r^Q} \, \text{d}\nu(y)\text{d}\nu(x).
    \]
    In particular, $\Lip(X,\rho) \subseteq \mathcal{F}$.
    By Proposition \ref{prop:Conf-Inv-local}, we have
    \begin{equation}\label{eq:Lip-rho}
        \Gamma\Span{f} \ll \nu \text{ for all } f \in \Lip(X,\rho).
    \end{equation}

    Equation \eqref{eq:Lip-rho} is extended to general $f \in \mathcal{F}$ by an approximation argument. For $x \in X$ and $r > 0$ the function
    \[
        \phi(y) := \max\left\{ \min\left\{ \frac{2r - \rho(x,y)}{r},1 \right\},0\right\} \in \Lip(X,\rho)
    \]
    is Lipschitz function satisfying $\phi = 1$ on $B(x,r)$ and $\phi = 0$ in $X \setminus B(x,2r)$.
    This and \cite[Proof of Lemma 4.2]{ResistanceConjecture} shows that we can construct partitions of unity on $\{\psi_V \}_{V \in \mathcal{V}_r} \subseteq \Lip(X,\rho) \subseteq \mathcal{F}$ that have the same properties as in the proof of Proposition \ref{prop:Conf-Inv}. Note that $\mathcal{E}(\psi_V) \lesssim 1$ follows from Proposition \ref{prop:Conf-Inv-local}.
    
    For $r > 0$ and $f \in \mathcal{F}$ we define the approximations
    \[
        f_r := \sum_{V \in \mathcal{V}_r} \left( \kint_{V} f\, \text{d}\mu \right) \psi_V.
    \]
    It follows from similar arguments as in the proof of Proposition \ref{prop:Conf-Inv} that $\{f_r\}_{r > 0}$ is a bounded subset of $\mathcal{F}$. Moreover, $f_r \to f$ in $L^p(X,\mu)$ as $r \downarrow 0$. This follows by first proving the convergence for continuous functions via the uniform continuity, and then using the density $C(X) \subseteq L^p(X,\mu)$ and uniform boundedness of the linear operators $h \mapsto h_r$ in $L^p(X,\mu)$ for $r > 0$.
    Since each $f_r$ is Lipschitz, it follows from \eqref{eq:Lip-rho} and the weak lower semicontinuity that $\Gamma\Span{f} \ll \nu$ for a general $f \in \mathcal{F}$.

    The minimality of $\nu$ is argued next.
    Note that $(X,\rho)$ is bi-Lipschitz equivalent to a geodesic metric spaces because it is a Loewner space. See \cite[Theorem 8.3.2]{HKST} for details.
    Hence, without the loss of generality, we may assume that $(X,\rho)$ is geodesic.
    Now, choose a base point $x_0$ and let $f(x) := \rho(x,x_0)$.
    Take any point $x \in X \setminus \{x_0\}$ and let $r = \rho(x_0,x)$.
    Also take a geodesic curve $\gamma$ from $x_0$ to $x$, and let $\varepsilon \in (0,r/2)$. By the additivity of the length of curves, there is a point $z$ on $\gamma$ so that $\rho(z,x) = \varepsilon/4$ and $\rho(x_0,z) = \rho(x_0,x) - \varepsilon/4$. It then follows from the reverse triangle inequality and the $Q$-Ahlfors regularity that
    \begin{align*}
        \kint_{B_\rho(x,\varepsilon)} \frac{\abs{f(x) - f(y)}^Q}{\nu(B_\rho(x,\varepsilon))} \,\text{d}\nu(y)  \geq \frac{1}{\nu(B_\rho(x,\varepsilon))} \int_{B_\rho(z,\varepsilon/8)} \frac{(\varepsilon/16)^Q}{\nu(B_\rho(x,\varepsilon))} \, \text{d}\nu \geq C.
    \end{align*}
    We combine this with Proposition \ref{prop:Conf-Inv-local} to obtain $\Gamma\Span{f}(U) \geq D^{-1}\nu(U)$ for every open set $U \subseteq X$. By the Borel regularity, we get $\nu \ll \Gamma\Span{f}$, which clearly implies the minimality of $\nu$.
\end{proof}

\section{Singularity of energy measures}\label{sec:Singular}

This section regards the following singularity result of energy measures.

\begin{proposition}\label{prop:Singular}
    Suppose that Assumption \ref{Assumption:Confdim} holds, and let $\Lambda$ be any minimal energy dominant measure of $(X,d,\mu,\mathcal{E},\mathcal{F},\Gamma)$. If additionally $(X,d)$ is geodesic and $\beta > p$, then $\Lambda \perp \mu$.
\end{proposition}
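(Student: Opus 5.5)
We argue by contradiction, following the strategy used for singularity of energy measures in the $p=2$ setting of Kajino--Murugan and its $p$-energy adaptations. Assume $\Lambda \not\perp \mu$. Write the Lebesgue decomposition $\mu = \mu_a + \mu_s$ with respect to $\Lambda$; then $\mu_a \neq 0$. By the construction in the proof of Proposition~\ref{prop:MinEnergyDom}, $\Lambda$ is a countable weighted sum of energy measures $\Gamma\Span{f_i}$. Hence some $f := f_i$ satisfies $\Gamma\Span{f} \not\perp \mu$. Write $\Gamma\Span{f} = g\,\mu + \sigma$ with $\sigma \perp \mu$, where $g \in L^1(\mu)$ and $g > 0$ on a set of positive $\mu$-measure. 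We will show that, when $\beta > p$, every energy measure has vanishing absolutely continuous part, i.e. $g = 0$ $\mu$-a.e. This contradicts the choice of $f$.

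The key step is a lower bound on the energy of balls in terms of oscillation, together with a geodesic chaining argument. The geodesic hypothesis gives, for any ball $B(x,r)$ and $n\in\N$, a chain of $n$ balls $B_1,\dots,B_n$ of radius $\asymp r/n$ along a geodesic, each consecutive pair overlapping, and with bounded overlap. Applying $\PI(\beta)$ to each small ball and Hölder along the chain gives
\[
\abs{f_{B_1} - f_{B_n}}^p \lesssim n^{p-1}\sum_{j=1}^n \Bigl(\tfrac{r}{n}\Bigr)^{\beta-d_{\Hr}} \Gamma\Span{f}(\sigma B_j).
\]
At a $\mu$-Lebesgue point of $g$ where $\sigma$ has zero density, we have $\Gamma\Span{f}(\sigma B_j) \approx g(x)(r/n)^{d_{\Hr}}$. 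The right-hand side then becomes $\approx g(x)\, n^{p-\beta} r^{\beta}$, which tends to $0$ as $n\to\infty$ because $\beta>p$.

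Chaining across the whole ball therefore shows that the local oscillation of $f$ at scale $r$ is $o(r^{\beta/p})$ in the $L^p$-averaged sense, uniformly in small $r$ at such points. This is the step I expect to be the main obstacle. The chain balls must be controlled by density points, not only at the center $x$; to handle this I will use a density/maximal-function argument restricted to a compact set of density points with uniform convergence (Egorov). We then combine this with a reverse inequality. The localized energy $\Gamma\Span{f}(B(x,r)) \approx g(x) r^{d_{\Hr}}$ should be bounded above by the oscillation at scale $r$ times $\mu(B)/r^\beta$. To get this bound, we replace $f$ near $B$ by an approximation $f_r=\sum_V f_V\psi_V$ built from the partition of unity of Lemma~\ref{lemma:POU}, as in \eqref{eq:Annular-2}. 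Lower semicontinuity then bounds $\Gamma\Span{f}$ locally by $\liminf$ of these discrete energies, which are controlled by $\varepsilon^{d_{\Hr}-\beta}\sum\abs{f_U-f_V}^p$.

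Plugging in the $o$-oscillation bound makes this discrete energy $o(r^{d_{\Hr}})$. The point is that the chain estimate gives oscillation of order $n^{(p-\beta)/p}$ improvement at each subdivision, so summing the discrete energy over the $\asymp n^{d_{\Hr}}$ cells of a ball at scale $r/n$ yields $g(x)\,n^{p-\beta}\, r^{d_{\Hr}}\to 0$. Hence $\Gamma\Span{f}(B(x,r))=o(r^{d_{\Hr}})$, forcing $g(x)=0$ on the chosen compact set. Exhausting by such sets gives $g=0$ $\mu$-a.e., which is the desired contradiction, so $\Lambda\perp\mu$.

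If this self-improvement is delicate to make rigorous, my fallback is to argue through blow-ups instead. At a point where $\Gamma\Span{f}$ has positive density $g(x)$, rescale $f$ by $r^{-(\beta-d_{\Hr})/p}$-type normalization so that rescaled energy measures converge to a multiple of $\mu$. The limit is a nonconstant function with energy measure exactly $c\mu$. The chaining estimate shows that such a function has oscillation $0$ on every ball whenever $\beta>p$, which is impossible.
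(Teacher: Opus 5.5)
Your reduction to a single $f$ with $\Gamma\langle f\rangle\not\perp\mu$ matches the paper. From there your route is genuinely different, and its core can be made rigorous.

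\textbf{The paper's route.} It first treats continuous $f$ using Cheeger-type asymptotic minimality (Lemma~\ref{lemma:Cheeger}): at $\mu$-a.e.\ $x$, $f$ nearly minimizes the normalized energy on $\overline B(x,r)$ among perturbations supported in $B(x,r)$. It tests this against the single competitor $f+(f_{B(x,3r/\lambda)}-f)\varphi$, where $\varphi$ comes from the cutoff Sobolev inequality (Lemma~\ref{lemma:CS}). The competitor's energy is bounded by the annulus energy, which is small by annular decay, plus $r^{-\beta}\int_{B(x,2r)}|f-f_{B(x,3r/\lambda)}|^p\,d\mu$. Your chaining bounds the latter by $\lesssim\lambda^{p-\beta}\mu(\overline B)$. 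General $f$ then follows by approximation. Because asymptotic minimality already encodes all fine-scale information, the paper needs density control only at the single point $x$ and at the fixed ratio $r/\lambda$, which a Lebesgue point supplies for free.

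\textbf{What your route buys.} You dispense with Lemma~\ref{lemma:Cheeger}, the cutoff Sobolev inequality, annular decay and the continuity reduction. You use only $\mathrm{PI}(\beta)$, the partition of unity, and lower semicontinuity along $f_\varepsilon=\sum_U f_U\psi_U$. The price is that you must control $\varepsilon$-scale differences throughout $B(x,r)$ as $\varepsilon\to0$.

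\textbf{The reverse inequality as stated is false.} Bounding $\Gamma\langle f\rangle(B(x,r))$ by $\mu(B)r^{-\beta}$ times the $p$-oscillation of $f$ at scale $r$ is a Caccioppoli inequality, and it fails for general $f\in\mathcal F$. For instance, many height-one bumps at scale $t\ll r$ have energy $\gtrsim r^{d_{\mathrm H}}t^{-\beta}$ but bounded oscillation. Lower semicontinuity only gives $\Gamma\langle f\rangle(B(x,r))\le\liminf_{\varepsilon\to0}\Gamma\langle f_\varepsilon\rangle(B(x,r))$. So what you must plug in are $\varepsilon$-scale bounds everywhere in the ball; the scale-$r$ bound at $x$ alone does not suffice.

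\textbf{How to organize your fourth paragraph so it works.}
\begin{enumerate}
\item Fix $m$ and chain each neighbouring pair $U,V$ at scale $s=\varepsilon/m$. This gives $\Gamma\langle f_\varepsilon\rangle(B(x,r))\lesssim m^{p-1-\beta+d_{\mathrm H}}\sum_j\Gamma\langle f\rangle(\sigma B_j)$, summed over all chain balls, plus endpoint terms.
\item Take a compact $K$ on which $\Gamma\langle f\rangle(B(y,t))\le A\,\mu(B(y,t))$ for all $t<t_0$. Enlarged chain balls meeting $K$ contribute $\lesssim A\,m^{p-\beta}r^{d_{\mathrm H}}$.
\item The remaining enlarged chain balls lie in $X\setminus K$ and have bounded overlap. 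They contribute $\lesssim m^{p-1-\beta+d_{\mathrm H}}\Gamma\langle f\rangle(B(x,2r)\setminus K)=o(r^{d_{\mathrm H}})$, provided $x$ is a density point of $K$, a Lebesgue point of $g$, and a zero-density point of the singular part.
\item Choose $m$ first, then $r$. The gain is $m^{p-\beta}$ with $m$ fixed before $\varepsilon\to0$, not $n^{p-\beta}$ with $n=r/\varepsilon\to\infty$.
\item The endpoint terms comparing $f_U$ with scale-$s$ averages gain nothing from dyadic telescoping at the centre of $U$. Either use $\kint_{U}|f-f_{B(\cdot,s)}|\,d\mu\lesssim s^{\beta/p}\bigl(\Gamma\langle f\rangle(2U)/\mu(U)\bigr)^{1/p}$, which follows from Lebesgue points and Fubini, or take averages over $\tfrac1m U$ as the coefficients.
\end{enumerate}

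\textbf{The fallback blow-up fails as stated.} Lower semicontinuity only yields $\Gamma\langle f_\infty\rangle\le c\mu$ for a limit, not equality. Your own chaining shows the normalized rescalings tend to constants, so no contradiction arises. Preventing exactly this loss of energy is the role of Lemma~\ref{lemma:Cheeger} in the paper.
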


The singularity of energy measures has its origins in the theory of diffusion on fractals.
The first result to this direction was obtained by Kusuoka on the Sierpi\'nski gasket \cite{KusuokaSingular}, and later Hino generalized it to more general fractals \cite{HinoSingular}.
In the full generality of Dirichlet forms on metric spaces, the singularity result was settled by Kajino and Murugan \cite{KajinoMuruganSingular}. By using a similar argument, Yang extended the singularity to $p$-energies \cite{yangsing}.
In fact, \cite{yangsing} essentially covers Proposition \ref{prop:Singular}, but we decided to provide a detailed proof since therein assumptions slightly differ from ours. For instance, we do not assume a Clarkson's inequality.

For the convenience of the reader not familiar with the singularity, we discuss a very simple special case of Proposition \ref{prop:Singular} which seems to provide quite accurate intuition for the general case. Suppose that $\Gamma\Span{f} \sim C\mu$ for $f \in \mathcal{F}$ and $C > 0$.
By a two-point estimate argument, see \cite[Theorem 3.2]{SobolevMetPoincare}, one can apply $\PI(\beta)$ to show that $f$ is $\alpha$-Hölder continuous for $\alpha = \beta/p > 1$.
Then, by composing $f$ with geodesic curves, one finds by differentiating that $f$ must be constant. Hence $\Gamma\Span{f} = 0$ by the strong locality, which is a contradiction to $C>0$.

Then in the general case, we do a blow-up argument at a density point of $\Gamma\Span{f}$ with respect to $\mu$.
By the Lebesgue differentiation theorem, $\Gamma\Span{f}$ behaves like $\mu$ near such points. By taking care of the error term and the potential singular part, we can perform a conceptually quite similar proof scheme as in the previous special case.

Lastly, let us note that Proposition \ref{prop:Singular} is false in general if $(X,d)$ is not geodesic. For instance, consider the $p$-Dirichlet space $(\R^n,d,\text{d}x,\mathcal{E},W^{1,p}(\R^n),\Gamma)$ so that $\mathcal{E}$ is the usual Dirichlet $p$-energy and $\text{d}\Gamma\Span{f} = \abs{\nabla f}^p \, \text{d}x$. The metric $d$ is chosen to be a snowflaked Euclidean metric.
The conclusion is of the proposition is obviously false, and it is easy to check that the hypotheses of Proposition \ref{prop:Singular} are all satisfied except that $(\R^n,d)$ is not geodesic.

\subsection{Infinitesimal harmonicity}
Our argument in the proof of Proposition \ref{prop:Singular} is essentially the same as those in \cite{KajinoMuruganSingular,yangsing}, but we streamline some of the details by adopting a method of Cheeger about infinitesimal harmonicty of Sobolev functions \cite[Theorem 3.7]{Cheeger}.
Let us note that, for the following lemma, we essentially only need the Vitali covering property of $\mu$, see \cite[Theorem 3.4.3]{HKST}, and the weak lower semicontinuity.
An analogous fact was pointed out in the beginning of Cheeger's paper.

\begin{lemma}\label{lemma:Cheeger}
    Suppose that Assumption \ref{Assumption:Confdim} holds, and let $f \in \mathcal{F} \cap C(X)$. It holds for $\mu$-almost every $x \in X$ that 
    \[
        \lim_{r \downarrow 0} \left( \frac{1}{\mu(\overline{B}(x,r))} \int_{\overline{B}(x,r)} \, \textup{d}\Gamma\Span{f} - \inf_{ \varphi } \frac{1}{\mu(\overline{B}(x,r))}\int_{\overline{B}(x,r)} \, \textup{d}\Gamma\Span{f + \varphi}  \right) = 0,
    \]
    where the infimum is taken over all $\varphi \in \mathcal{F}$ with $\varphi = 0$ in $X \setminus B(x,r)$.
\end{lemma}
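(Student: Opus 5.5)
We argue by contradiction, following Cheeger's argument \cite[Theorem 3.7]{Cheeger}. For a closed ball $\overline{B}=\overline{B}(x,r)$ write
\[
D(x,r) := \frac{1}{\mu(\overline{B})}\Big(\Gamma\Span{f}(\overline{B}) - \inf_\varphi \Gamma\Span{f+\varphi}(\overline{B})\Big) \geq 0,
\]
where the infimum is over $\varphi\in\mathcal{F}$ vanishing on $X\setminus B(x,r)$. The choice $\varphi=0$ gives $D\ge 0$, and $D(x,r)\le \Gamma\Span{f}(\overline{B})/\mu(\overline{B})$. If the claim fails, there are $\delta>0$ and a set $E$ with $\mu^*(E)>0$ such that $\limsup_{r\downarrow0}D(x,r)>\delta$ for every $x\in E$.

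\emph{Step 1: a fine cover and a Vitali subfamily.} Fix an open set $W\supseteq E$. The family of closed balls $\overline{B}(x,r)\subseteq W$ with $x\in E$ and $D(x,r)>\delta$ is a fine cover of $E$. By the Vitali covering theorem for the doubling measure $\mu$ \cite[Theorem 3.4.3]{HKST}, it contains a countable disjoint subfamily $\{\overline{B}_i\}$ that covers $\mu$-almost all of $E$, so $\sum_i\mu(\overline{B}_i)\ge \mu^*(E)$. For each $i$ pick $\varphi_i$, vanishing outside $B_i$, with
\[
\Gamma\Span{f+\varphi_i}(\overline{B}_i)\le \Gamma\Span{f}(\overline{B}_i)-\delta\mu(\overline{B}_i).
\]

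\emph{Step 2: glue the modifications.} Set $g_N := f+\sum_{i\le N}\varphi_i$. The balls are disjoint and each $\varphi_i$ is supported in $\overline{B}_i$, so on the open set $X\setminus\bigcup_{i\le N}\overline{B}_i$ the function $g_N-f$ vanishes. Strong locality together with the triangle inequality then gives $\Gamma\Span{g_N}=\Gamma\Span{f}$ there, and on each $\overline{B}_i$ we have $\Gamma\Span{g_N}=\Gamma\Span{f+\varphi_i}$. This localisation is the delicate point. Strong locality is stated only on open sets, so we must also show that the boundary spheres $\partial B_i$ carry no extra energy. To arrange this, choose radii for which $\Gamma\Span{f}(\partial B)=0$; all but countably many radii qualify, so the fine cover survives. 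Then apply strong locality on small open neighbourhoods of the spheres, and use Borel regularity. Granting this,
\[
\Gamma\Span{g_N}(X)\le \mathcal{E}(f)-\delta\sum_{i\le N}\mu(\overline{B}_i).
\]

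\emph{Step 3: pass to the limit.} We have $\sum_i\mu(\overline{B}_i)\le\mu(W)<\infty$. To control $\norm{\varphi_i}_{L^p}$, truncate: replace $\varphi_i$ by a function with $f+\varphi_i$ clipped between $\min_{\overline{B}_i} f$ and $\max_{\overline{B}_i} f$. By Lipschitz contractivity this does not increase the energy, so the inequality in Step 1 still holds. Since $f$ is continuous it is uniformly continuous on the compact space $X$, hence $\sup\abs{\varphi_i}\le \operatorname{osc}_{\overline{B}_i} f$ is uniformly small when the radii are small. Consequently $g_N$ is Cauchy in $L^p$ with limit $g$, and $\sup_N\mathcal{E}(g_N)<\infty$. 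Weak lower semicontinuity then yields
\[
\Gamma\Span{g}(X)\le \mathcal{E}(f)-\delta\mu^*(E).
\]

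\emph{Step 4: reach the contradiction.} Let $W$ shrink to an outer-regular approximation of $E$, with radii taken so small that $\norm{g-f}_{L^p}\to 0$. This produces a sequence $g\to f$ in $L^p$ whose energies stay at most $\mathcal{E}(f)-\delta\mu^*(E)$. Weak lower semicontinuity forces $\mathcal{E}(f)\le\mathcal{E}(f)-\delta\mu^*(E)$, which is absurd.

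This global step only compares total energies and does not yet localise. To make it honest, apply the same argument to $\Gamma\Span{\cdot}(\overline{W})$ instead of $\Gamma\Span{\cdot}(X)$, and use the lower semicontinuity of $\Gamma\Span{g_k}(\overline{W})$ on compact sets. I expect this localisation, and in particular the boundary-energy issue raised in Step 2, to be the main technical obstacle.
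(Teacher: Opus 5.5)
Your argument is the paper's proof. It has the same ingredients: a set of positive outer measure where the deficit exceeds $\delta$, a Vitali family of disjoint closed balls, competitors kept uniformly small by the uniform continuity of $f$, gluing via strong locality, and weak lower semicontinuity of the total energy. The paper keeps a finite subfamily covering half of the bad set and only lets the radius bound tend to zero. Your limit in $N$ and the open set $W$ are harmless but unnecessary, since $\mu(X)<\infty$.

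The two points you single out as the main obstacles are not obstacles. For finitely many pairwise disjoint closed balls, the set $U_i:=X\setminus\bigcup_{j\le N,\,j\ne i}\overline{B}_j$ is open and contains $\overline{B}_i$. The function $g_N-(f+\varphi_i)=\sum_{j\ne i}\varphi_j$ vanishes on $U_i$. Strong locality gives $\Gamma\Span{g_N-(f+\varphi_i)}(U_i)=0$. The triangle inequality, used in both directions, then gives $\Gamma\Span{g_N}=\Gamma\Span{f+\varphi_i}$ on all Borel subsets of $U_i$, boundary spheres included. The same argument on the open set $X\setminus\bigcup_{i\le N}\overline{B}_i$ yields the exact identity $\mathcal{E}(g_N)=\Gamma\Span{f}\bigl(X\setminus\bigcup_{i\le N}\overline{B}_i\bigr)+\sum_{i\le N}\Gamma\Span{f+\varphi_i}(\overline{B}_i)$, with no condition on the radii.

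Your proposed radius selection would not be legitimate anyway. At a point of $E$ you only know a \emph{sequence} of radii with $D(x,r)>\delta$, and it could lie entirely in the countable set of atoms of $r\mapsto\Gamma\Span{f}(\overline{B}(x,r))$. Moreover, $\Gamma\Span{f}(\partial B)=0$ says nothing about $\Gamma\Span{f+\varphi_i}$ or $\Gamma\Span{g_N}$ on $\partial B$. Likewise no localisation is needed: since $\mathcal{E}(f)<\infty$ and $\delta\mu^*(E)>0$, the inequality $\mathcal{E}(f)\le\mathcal{E}(f)-\delta\mu^*(E)$ is already the contradiction. This is exactly how the paper concludes.

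The one place where a boundary issue genuinely arises is your truncation. Suppose you clip $f+\varphi_i$ to the range of $f$ on $\overline{B}_i$, writing $B_i=B(x_i,r_i)$. Then the truncated competitor agrees with the clipped function only on the closed set $\overline{B}_i$: their difference is $f$ minus its clipping, which need not vanish on any neighbourhood of the sphere. Strong locality is formulated for open sets, so it does not transfer the Lipschitz contractivity bound to the energy on $\overline{B}_i$.

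Clip instead to $[m_i,M_i]$, the range of $f$ on the larger ball $B(x_i,2r_i)$, as the paper does. Set $T_i(t):=\max\{\min\{t,M_i\},m_i\}$ and $\tilde\varphi_i:=T_i(f+\varphi_i)-T_i(f)$. Then:
\begin{enumerate}
\item $\tilde\varphi_i=0$ off $B_i$;
\item $f+\tilde\varphi_i=T_i(f+\varphi_i)$ on the open set $B(x_i,2r_i)\supseteq\overline{B}_i$, hence $\Gamma\Span{f+\tilde\varphi_i}(\overline{B}_i)\le\Gamma\Span{f+\varphi_i}(\overline{B}_i)$;
\item $\abs{\tilde\varphi_i}\le M_i-m_i$, which is still uniformly small.
\end{enumerate}
With these two adjustments (the open-neighbourhood gluing in Step 2 and the enlarged clipping ball) your proof is complete.
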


\begin{proof}
    We make a counter-assumption that the conclusion is false. In that case, there is $\varepsilon>0$ and a Borel subset $K \subseteq X$ with $\mu(K) > 0$ so that the following holds. For every $z \in K$ there are sequences $\{ r_{z,i} \}_{i = 1}^\infty \subseteq (0,1)$ and $\{\varphi_{z,i} \}_{i = 1}^\infty \subseteq \mathcal{F}$ with $r_{z,i} \downarrow 0$, $\varphi_{i,z} = 0$ in $X\setminus B(z,r_{z,i})$ and
    \begin{equation}\label{eq:Asymp-Harmonic-varepsilon}
        \frac{1}{\mu(\overline{B}(z,r_{z,i}))} \int_{\overline{B}(z,r_{z,i})} \, \textup{d}\Gamma\Span{f} \geq \varepsilon +  \frac{1}{\mu(\overline{B}(z,r_{z,i}))}\int_{\overline{B}(z,r_{z,i})} \, \textup{d}\Gamma\Span{f + \varphi_{z,i}}.
    \end{equation}
    By the Vitali covering property, for every $\delta > 0$ there is a countable subset $J \subseteq K$ such that the following holds. There is a collection of closed balls $\{ \overline{B}(z,r_z) \}_{z \in J}$ and a collection of functions $\{ \varphi_{z} \}_{z \in J} \subseteq \mathcal{F}$ such that $r_z = r_{z,i}$ and $\varphi_z = \varphi_{z,i}$ for some $i$ which may depend on $z$, and $r_z < \delta$ for all $z \in J$. In addition, they are chosen to cover almost all of $K$. But then we can choose a finite subcollection that covers at least half of $K$. In other words, there is a finite subset $J_0 \subseteq J$ with
    \[
      2\mu\left(\bigcup_{z \in J_0}\overline{B}(z,r_z)\right) \geq \mu(K).  
    \]
    
    Now, fix $\delta > 0$ and the data as above. By modifying $\varphi_z$ if necessary, we may assume that $m_z \leq f + \varphi_z \leq M_z$ in $B(z,r_z)$ where $m_z,M_z$ are the minimum and maximum of $f$ in $B(z,2r_z)$, respectively. For instance, we can use the continuity of $f$ to construct $\tilde{\varphi}_z \in \mathcal{F}$ so that
    \[
        \tilde{\varphi}_z = 
        \begin{cases}
            - f + \max \{ \min\{ f + \varphi_z,m_z \} ,M_z \} & \text{ in } B(z,r_z) \\
            0 & \text{ elsewhere}.
        \end{cases}
    \]
    Note that the estimate \eqref{eq:Asymp-Harmonic-varepsilon} holds if we replace $\varphi_z$ with $\tilde{\varphi}_z$ by the Lipschitz contractivity of the $p$-Dirichlet space. Then consider the function
    \[
        f_\delta := f + \sum_{z \in J_0} \varphi_z \in \mathcal{F}.
    \]
    By the strong locality and \eqref{eq:Asymp-Harmonic-varepsilon}, its energy can be estimated 
    \begin{align*}
        \mathcal{E}(f_\delta) & = \int_{X \setminus \bigcup_{z \in J_0} \overline{B}(z,r_z) } \, \text{d}\Gamma\Span{f} + \sum_{z \in J_0} \int_{\overline{B}(z,r_z)} \text{d}\Gamma\Span{f + \varphi_z}\\
        & \leq \int_{X \setminus \bigcup_{z \in J_0} \overline{B}(z,r_z) } \, \text{d}\Gamma\Span{f} + \sum_{z \in J_0} \int_{\overline{B}(z,r_z)} \text{d}\Gamma\Span{f} - \varepsilon\sum_{z \in J_0} \mu(\overline{B}(z,r_z))\\
        &  \leq \mathcal{E}(f) - \frac{\varepsilon \mu(K)}{2}.
        \end{align*}
    Since $f$ is continuous and $X$ is assumed to be compact, $f_\delta \to f$ uniformly. For this to hold, the modifications of $\varphi_z$ are necessary.
    By sending $\delta \downarrow 0$, the weak lower semicontinuity yields a contradiction $\mathcal{E}(f) \leq \mathcal{E}(f) - \varepsilon \mu(K)/2$ .
\end{proof}

\subsection{Proof of the singularity}
Our proof uses a cutoff Sobolev inequality. See \cite[Introduction]{ResistanceConjecture} for further background.

\begin{lemma}\label{lemma:CS}
    Assume the hypotheses of Lemma \ref{prop:Singular}, and let $\delta \in (0,1)$.
    There is $C \geq 1$, depending on $\delta$, such that for every $x \in X$ and $r > 0$ there exists $\varphi \in \mathcal{F}$ satisfying $0 \leq \varphi \leq 1$, $\varphi = 1$ in $B(x,(1-\delta) r)$, $\varphi = 0$ in $X \setminus B(x,r)$ and the following cutoff Sobolev inequality $\CS(\Psi)$. If $f \in \mathcal{F} \cap C(X)$ then
    \[
        \int_{\overline{B}(x,(1+\delta)r)} \abs{f}^p \, \textup{d}\Gamma\Span{\varphi} \leq C\left( \int_{\overline{B}(x,(1+\delta)r) \setminus B(x,(1-\delta)r)} \, \textup{d}\Gamma\Span{f} + \frac{1}{r^\beta} \int_{B(x,2r)} \abs{f}^p \, \textup{d}\mu \right).
    \]
\end{lemma}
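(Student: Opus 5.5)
Since $(X,d)$ is geodesic, I would build $\varphi$ as a sum of many thin-annulus cutoffs and then average, following the standard route that turns $\PI(\beta)$ plus $\cCap(\beta)$ into $\CS(\beta)$ when $\beta > p$.

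\textbf{Layered cutoff.} Fix $N \in \N$, to be chosen depending on $\delta$, and split $B(x,r)\setminus B(x,(1-\delta)r)$ into $N$ concentric annuli $A_j$ of width $\delta r/N$. Using $\cCap(\beta)$ together with the covering construction from the proof of Lemma \ref{lemma:Carrasco}, I would produce for each $j$ a function $\eta_j$ that equals $1$ inside $A_j$, vanishes outside, has $0\le \eta_j \le 1$, and whose energy measure is carried by $A_j$. Its energy would be bounded by $\mathcal{E}(\eta_j) \lesssim N^{\beta - d_{\Hr}}\,\mu(B)\,r^{-\beta}$ up to covering-number factors. Geodesicity is what makes $\eta_j$ genuinely transition across the thin annulus.

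Then I would set $\varphi := \frac1N\sum_j \eta_j$. Strong locality makes the supports of $\Gamma\Span{\eta_j}$ essentially disjoint, so $\Gamma\Span{\varphi} = N^{-p}\sum_j \Gamma\Span{\eta_j}$ on the annulus. By Lipschitz contractivity, truncating to $[0,1]$ is harmless.

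\textbf{Bounding $\int |f|^p\, d\Gamma\Span{\varphi}$.} For each $j$ I would cover $A_j$ by balls $B_i$ of radius $s=\delta r/N$ and estimate $\int_{B_i}|f|^p\,\mathrm d\Gamma\Span{\eta_j}$. On each such ball, write $f = (f - f_{2B_i}) + f_{2B_i}$.
\begin{itemize}
\item The constant part contributes $|f_{2B_i}|^p\,\mathcal{E}$-mass of order $s^{d_{\Hr}-\beta}|f_{2B_i}|^p \lesssim s^{-\beta}\int_{2B_i}|f|^p\,\mathrm d\mu$.
\item The oscillating part would be handled by $\PI(\beta)$ combined with a sup-type bound. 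Since $\beta>p$, a Morrey/two-point (Hölder) estimate as in \cite[Theorem 3.2]{SobolevMetPoincare} should give $\operatorname{osc}_{B_i} f^p \lesssim s^{\beta - d_{\Hr}}\,\Gamma\Span{f}(\sigma B_i)$ when $d_{\Hr}$-regularity is used. Multiplying by the energy $\lesssim s^{d_{\Hr}-\beta}$ of the local cutoff piece yields $\lesssim \Gamma\Span{f}(\sigma B_i)$.
\end{itemize}
Summing over $i$ and $j$ with bounded overlap, and inserting the factor $N^{-p}$, the total should be at most
\[
C(\delta)\left(\Gamma\Span{f}(\text{annulus}) + N^{-p}(N/\delta r)^{\beta}\int_{B(x,2r)}|f|^p\,\mathrm d\mu\right).
\]
Since $N$ is fixed in terms of $\delta$, this is of the required form. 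The enlarged annulus $\overline{B}(x,(1+\delta)r)\setminus B(x,(1-\delta)r)$ absorbs the inflation $\sigma B_i$ once $N \gg \sigma/\delta$.

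\textbf{Main obstacle.} The oscillation estimate is the delicate step. Making the Morrey-type pointwise bound rigorous from $\PI(\beta)$ alone for continuous $f$ requires a telescoping over dyadic balls, and it relies on $\beta > d_{\Hr}$-type scaling. That scaling is not assumed in general here, because Assumption \ref{Assumption:Confdim} only gives $\beta>p$.

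If this fails, I would instead invoke the equivalence $\PI(\beta)+\cCap(\beta)\Rightarrow \CS(\beta)$ from \cite[Introduction and main theorem]{ResistanceConjecture}, which holds for $p$-Dirichlet spaces satisfying VD, with the constant depending on $\delta$. I would then only verify that its hypotheses follow from Assumption \ref{Assumption:Confdim}, using bounded turning to pass to the annular regime. This citation route is the one I would actually commit to, with the construction above as the underlying mechanism.
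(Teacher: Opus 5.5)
Your committed route uses the right ingredient: the paper's proof also starts from \cite[Lemma 2.13 and Theorem 3.3]{ResistanceConjecture}, and that is where $\beta>p$ enters. But it skips the one step the paper actually proves. What the paper extracts from that citation is the \emph{ball} version. For each ball $B=B(z,s)$ it gives a cutoff $\varphi_B$ with $\varphi_B=1$ on $B$, $\varphi_B=0$ off $2B$, and a cutoff Sobolev inequality with right-hand side $C\bigl(\Gamma\Span{f}(\sigma B)+s^{-\beta}\int_{2B}\abs{f}^p\,\mathrm{d}\mu\bigr)$. There the energy of $f$ is taken over the whole inflated ball. The lemma instead needs the transition to happen inside $B(x,r)\setminus B(x,(1-\delta)r)$, with the energy term confined to the thin annulus $\overline{B}(x,(1+\delta)r)\setminus B(x,(1-\delta)r)$. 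This localization is exactly what the proof of Lemma \ref{lemma:Singular} exploits through annular decay. So it cannot be absorbed into ``a constant depending on $\delta$'', and bounded turning is not what produces it.

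The missing argument is a covering one. Take an $(r/\lambda)$-net $\mathcal{N}$ of $B(x,(1-\delta/2)r)$ with $\lambda$ large in terms of $\delta$ and $\sigma$, and set $\varphi:=\max_{z\in\mathcal{N}}\varphi_{B(z,r/\lambda)}$. Then $\varphi=1$ on $B(x,(1-\delta/2)r)$ and $\varphi=0$ off $B(x,r)$, so by strong locality $\Gamma\Span{\varphi}$ is carried by a shell of width $2r/\lambda$. On that shell, bound $\Gamma\Span{\varphi}\le\sum_z\Gamma\Span{\varphi_{B(z,r/\lambda)}}$ and apply the ball inequality to those $z$ whose balls $B(z,2r/\lambda)$ meet the shell. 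For these $z$, $B(z,\sigma r/\lambda)$ lies in the thin annulus, bounded overlap controls the sum, and the small-scale factor $\lambda^{\beta}r^{-\beta}$ becomes $C(\delta)r^{-\beta}$.

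Your first route has this covering skeleton, but, as you suspected, its local estimate does not hold under the stated hypotheses. The bound $(\mathrm{osc}_{B_i}f)^p\lesssim s^{\beta-d_{\mathrm{H}}}\Gamma\Span{f}(\sigma B_i)$ needs $\beta>d_{\mathrm{H}}$ for the dyadic telescoping to sum, whereas only $\beta>p$ is assumed. It would suffice where the paper actually applies Proposition \ref{prop:Singular}, since there $\beta=d_{\mathrm{H},X}+d_{\mathrm{H},Y}>d_{\mathrm{H},Y}$, but not for the lemma as stated. The repair is to merge your two routes: keep the small-ball covering, and replace the oscillation step by the cited ball cutoff Sobolev inequality on each small ball. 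The averaging over $N$ layers then becomes unnecessary, since no small coefficient in front of the energy term is required.
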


\begin{proof}
    By \cite[Lemma 2.13 and Theorem 3.3]{ResistanceConjecture} we have the following. There are $C,\sigma \geq 1$ such that for every $B = B(x,r)$ there exists $\varphi_{B} \in \mathcal{F}$ satisfying $0 \leq \varphi_B \leq 1$, $\varphi_B = 1$ in $B$, $\varphi_B = 0$ in $X \setminus 2B$ and
    \[
        \int_{B} \abs{f}^p \, \text{d}\Gamma\Span{\varphi_B} \leq C\left( \int_{\sigma B} \text{d}\Gamma\Span{f} + \frac{1}{r^\beta} \int_{2B} \abs{f}^p \, \text{d}\mu \right).
    \]
    Now, fix $x \in X$ and $r > 0$.
    Let $\lambda \geq 4$ be a large enough constant depending on $\delta,\sigma$, and take an $(r/\lambda)$-net $\mathcal{N}$ of $B(x,r)$.
    For each $z \in \mathcal{N}$ let $\varphi_z := \varphi_{B(z,r/\lambda)}$ be as given above, and let $\varphi := \max_{z \in \mathcal{N}} \varphi_z \in \mathcal{F}$.
    By choosing $\lambda$ to be large enough, we have the implication $\varphi = 1$ on $B(z,r/\lambda)$ unless $B(z,\sigma r/\lambda) \subseteq \overline{B}(x,(1+\delta)r) \setminus B(x,(1-\delta)r)$.
    The claimed cutoff Sobolev inequality now follows from the strong locality by applying $\Gamma\Span{\varphi} \leq \sum_{z \in \mathcal{N}} \Gamma\Span{\varphi_z}$.

\end{proof}

\begin{lemma}\label{lemma:Singular}
    Assume the hypotheses of Proposition \ref{prop:Singular}, and let $f \in \mathcal{F} \cap C(X)$. Then $\Gamma\Span{f} \perp \mu$. 
\end{lemma}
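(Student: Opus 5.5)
We argue by contradiction, following the blow-up scheme sketched at the beginning of this section. Suppose $\Gamma\Span{f}$ is not singular to $\mu$. By the Lebesgue--Radon--Nikodym decomposition on the doubling space $(X,d,\mu)$, write $\Gamma\Span{f} = g\,\mu + \Gamma^s$ with $\Gamma^s \perp \mu$ and $g \in L^1(X,\mu)$. Then the set $\{g > 0\}$ has positive $\mu$-measure.

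We choose a point $x_0$ with $g(x_0) =: a > 0$ at which four things hold simultaneously, each for $\mu$-a.e. $x_0$:
\begin{itemize}
\item $x_0$ is a Lebesgue point of $g$;
\item the singular part has vanishing density, $\Gamma^s(\overline{B}(x_0,r))/\mu(\overline{B}(x_0,r)) \to 0$;
\item the conclusion of Lemma \ref{lemma:Cheeger} holds at $x_0$;
\item the density bounds persist at nearby points, i.e. $\Gamma\Span{f}(B(y,s)) \le 2a\,\mu(B(y,s))$ for all balls $B(y,s) \subseteq B(x_0,r)$ with $r$ small (up to a small-measure exceptional set handled by Vitali).
\end{itemize}
On small balls $B = B(x_0,r)$ the energy measure then looks like $a\mu$, up to $o(\mu(B))$ in total mass.

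The key step is to exploit $\beta > p$ along a geodesic. Take $y$ with $d(x_0,y) = r/2$ and a geodesic $\gamma$ from $x_0$ to $y$. Split $\gamma$ into $N$ consecutive points $z_0,\dots,z_N$ at spacing $r/(2N)$. Applying $\PI(\beta)$ on chained balls $B(z_j, r/N)$, as in the two-point estimate of \cite[Theorem 3.2]{SobolevMetPoincare}, gives
\[
|f(z_j)-f(z_{j+1})|^p \lesssim \Big(\frac{r}{N}\Big)^{\beta-d_{\Hr}}\Gamma\Span{f}(B(z_j,\sigma' r/N)) \lesssim a\Big(\frac{r}{N}\Big)^{\beta}.
\]
Hence $|f(x_0)-f(y)| \lesssim N\,a^{1/p}(r/N)^{\beta/p} \to 0$ as $N\to\infty$, because $\beta/p>1$. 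The catch is that the density bound is only known on most small balls; it can fail near points where $\Gamma^s$ concentrates. In this sense $f$ is infinitesimally flat along geodesics in $B$. Then, using the cutoff function $\varphi$ of Lemma \ref{lemma:CS} and the competitor $\varphi\,(f_B-f)$, we build $\tilde f = f + \varphi(f_B - f)$. This function is nearly constant on $B(x_0,(1-\delta)r)$ and equals $f$ outside $B$.

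The cutoff Sobolev inequality, combined with the triangle inequality and strong locality, bounds the energy of $\tilde f$ on $\overline B(x_0,r)$ by
\[
C\Big(\Gamma\Span{f}(\text{annulus}) + r^{-\beta}\int_{2B}|f-f_B|^p\,d\mu\Big) + o(\mu(B)).
\]
The annulus term is $\lesssim \delta a\,\mu(B)$ by the density property and Ahlfors regularity. The oscillation term is $o(\mu(B))$ by the geodesic flatness. Therefore
\[
\frac{1}{\mu(\overline{B})}\int_{\overline{B}} d\Gamma\Span{\tilde f} \le C\delta a + o(1),
\]
whereas the left average in Lemma \ref{lemma:Cheeger} tends to $a$. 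Choosing $\delta$ small contradicts Lemma \ref{lemma:Cheeger}.

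The main obstacle is making the oscillation estimate rigorous: we must show $r^{-\beta}\int_{2B}|f-f_B|^p\,d\mu = o(\mu(B)) = o(r^{d_{\Hr}})$. A naive $\PI(\beta)$ only gives $O(\mu(B))$. The gain has to come from the chaining along geodesics with $\beta>p$, and it must be controlled uniformly over pairs of points despite bad balls carrying singular mass. I would handle this by a maximal-function/Vitali argument. It shows that, outside a set of $\mu$-proportion $o(1)$ in $2B$, every ball centred on the geodesic chains satisfies the density bound $\Gamma\Span{f}\le 2a\mu$. Connecting generic points of $2B$ via bounded-turning geodesics through good balls then yields $|f(z)-f(w)| = o(r^{\beta/p})$ for most pairs $z,w$, which is what the oscillation estimate needs.
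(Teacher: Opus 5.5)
Your overall architecture matches the paper's. You argue by contradiction via the Lebesgue decomposition, pick a base point that is a Lebesgue point of the density with vanishing singular density and that satisfies Lemma~\ref{lemma:Cheeger}, use the competitor $f+\varphi(c-f)$ with $\varphi$ from Lemma~\ref{lemma:CS}, and split into an annulus term and an oscillation term.

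\textbf{The gap.} It is the oscillation estimate, which you correctly single out but attack in a way that fails.

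\begin{itemize}
\item Chaining point values along a geodesic with $N\to\infty$ at fixed $r$ requires $\Gamma\Span{f}(B(z_j,\sigma' r/N))\lesssim a\,(r/N)^{d_{\Hr}}$ at arbitrarily fine scales around every chain point. That is exactly the hypothesis $\Gamma\Span{f}\lesssim\mu$ of the special case discussed before Lemma~\ref{lemma:Cheeger}; if you had it, $f$ would be constant near $x_0$ and Lemma~\ref{lemma:Cheeger} would be superfluous.
\item The data at $x_0$ do not give this. Lebesgue-point information only yields statements about $\mu$-most points. A maximal-function/Vitali patch cannot reach scale $r/N$ once $N^{-d_{\Hr}}$ is comparable to the proportion of the exceptional set, since entire balls of that radius may be bad. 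It also leaves $\int_E\abs{f-f_B}^p\,\text{d}\mu$ over the exceptional set $E$ uncontrolled.
\item Separately, your two-point bound with a single ball on the right is a Morrey-type estimate. Telescoping $\PI(\beta)$ over dyadic balls produces $\sum_k(2^{-k}s)^{(\beta-d_{\Hr})/p}$, which converges only when $\beta>d_{\Hr}$. That is not a hypothesis of Proposition~\ref{prop:Singular}.
\end{itemize}

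\textbf{The missing idea.} You need neither pointwise flatness nor an $o(\mu(B))$ bound. A small \emph{fixed} fraction of $\mu(\overline B)$ suffices, and it is obtained at a single mesoscopic scale.

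Normalize the density to $1$ at $x$ and fix a large ratio $\lambda$. Every ball $B(y,3\sigma r/\lambda)$ with $y\in B(x,2r)$ lies in $B(x,Cr)$ and has $\mu$-mass $\gtrsim\lambda^{-d_{\Hr}}\mu(B(x,Cr))$. Hence the Lebesgue-point and singular-density information at the single point $x$ give, for all small $r$ and \emph{all} such $y$,
\[
\biggl|\frac{\Gamma\Span{f}(B(y,3\sigma r/\lambda))}{\mu(B(y,3\sigma r/\lambda))}-1\biggr|\lesssim\lambda^{d_{\Hr}}\varepsilon,
\]
with no exceptional set.

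Now chain ball \emph{averages} $f_{B(y_i,3r/\lambda)}$ along a geodesic in at most $2\lambda$ steps of length $r/\lambda$. Apply $\PI(\beta)$ at scale $r/\lambda$ on each step and inside each net ball. This gives
\[
r^{-\beta}\int_{B(x,2r)}\abs{f-f_{B(x,3r/\lambda)}}^p\,\text{d}\mu\lesssim(1+\lambda^{d_{\Hr}}\varepsilon)\lambda^{p-\beta}\mu(\overline B),
\]
which is small because $\beta>p$.

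For the annulus term, use annular decay $\mu(A)\lesssim\delta^\alpha\mu(B)$. This comes from $X$ being geodesic and $\mu$ doubling, not from Ahlfors regularity alone.

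Choose $\delta$ first, then $\lambda$, then $\varepsilon$, then $r$. The competitor then has energy at most $\mu(\overline B)/2$ on $\overline B$, while $\Gamma\Span{f}(\overline B)\approx\mu(\overline B)$, contradicting Lemma~\ref{lemma:Cheeger}.

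If $\beta>d_{\Hr}$, as in the product application, your Morrey-type point chain with $N=\lambda$ fixed would also go through using the displayed uniform bound. Chaining averages avoids that extra assumption.
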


\begin{proof}
    Suppose that the conclusion of the lemma is false. In other words, in the Lebesgue decomposition $\Gamma\Span{f} = \mu_a + \mu_s$ where $\mu_a \ll \mu$ and $\mu_s \perp \mu$, it holds that $\mu_a \neq 0$.

    The main method of the proof is a (quantitative) blow-up argument, and the first step is to choose the base point.
    Let $x \in X$ so that the conclusion of Lemma \ref{lemma:Cheeger} and the following hold. If $h$ is the Radon--Nikodym derivative of $\mu_a$ w.r.t. $\mu$ then
        \begin{equation}\label{eq:Singular-LDT}
            \lim_{r \downarrow 0}
            \kint_{B(x,r)} \abs{h - h(x)} \, \text{d}\mu
            = 0 \text{ and }
            \lim_{r \downarrow 0} 
            \frac{\mu_s(B(x,r))}{\mu(B(x,r))} = 0.
        \end{equation}
    This is a standard property of doubling measures on metric spaces.
    After a rescaling, we assume without loss of generality that $h(x) = 1$.

    Since $(X,d)$ is geodesic and $\mu$ is doubling, it follows from the annular decay estimate \cite[Proposition 11.5.3]{HKST} that, for all $r > 0$ and $\delta > 0$, we have
    \begin{equation}\label{eq:Annular-decay}
        \mu( \overline{B}(x,(1+\delta)r) \setminus B(x,(1-\delta)r) ) \leq C_1\delta^{\alpha} \mu(B(x,r)).
    \end{equation}
    For the remainder of the the proof, we fix $\delta \in (0,1/8)$ so that the coefficient in the right-hand side is suitably small. We will need to take care with constants, and we denote by $C_i$ constants that only depend on the doubling constant of the measure and $p$.
    
    The second step is to choose a suitably scale in the blow-up procedure.
    Fix a large constant $\lambda \geq 8\delta^{-1}$ and a suitably small $\varepsilon > 0$ depending on $\lambda$. The precise values of $\delta,\lambda,\varepsilon$ are determined at the end of the proof.
    By \eqref{eq:Singular-LDT} and Ahlfors regularity \eqref{eq:AR}, for all small enough $r > 0$ and all $y \in B(x,2r)$, we have
    \begin{equation}\label{eq:blow-up-Singular}
        \left|\frac{\Gamma\Span{f}(B(y,3\sigma r/\lambda))}{\mu(B(y,3\sigma r/\lambda))}-1\right| \leq C_2 \lambda^{d_{\textrm{H}}} \varepsilon.
    \end{equation}

    The third step is to find a suitably competitor as in Lemma \ref{lemma:Cheeger}.
    Write $\overline{B} := \overline{B}(x,(1+\delta)r)$ and $A:= \overline{B}(x,(1+\delta)r) \setminus B(x,(1-\delta)r)$.
    By the cutoff Sobolev inequality in Lemma \ref{lemma:CS} there is $\varphi \in \mathcal{F} \cap C(X)$ such that $0 \leq \varphi \leq 1$, $\varphi = 1$ in $B(x,(1-\delta)r)$, $\varphi = 0$ in $X \setminus B(x,r)$ and
    \begin{equation}\label{eq:CS-Singular}
        \int_{\overline{B}} \abs{h}^p \, \text{d}\Gamma\Span{\varphi} \leq C_3\left(\int_{A} \text{d}\Gamma\Span{h} + \frac{1}{r^\beta} \int_{B(x,2r)} \abs{h}^p \, \text{d}\mu \right)
    \end{equation}
    for all $h \in \mathcal{F} \cap C(X)$.
    
    We now begin the actual computations. First estimate
    \begin{align*}
        & \quad \Gamma\Span{ f + (f_{B(x,3r/\lambda)} - f)\varphi }(\overline{B}) = \Gamma\Span{ f + (f_{B(x,3r/\lambda)} - f) \varphi}(A) \\
        \leq & \quad 2^{p-1}\left( \Gamma\Span{f}(A ) + \Gamma\Span{(f_{B(x,3r/\lambda)} - f)\varphi}(A)\right)\\
        \leq & \quad 4^{p-1} \left( \Gamma\Span{f}(A) + \int_{A} \abs{f - f_{B(x,3r/\lambda)}}^p \, \text{d}\Gamma\Span{\varphi} + \int_{A} \abs{\varphi}^p \, \text{d}\Gamma\Span{f} \right)\\
        \leq & \quad 4^p \left( \Gamma\Span{f}(A) + \int_{\overline{B}} \abs{f - f_{B(x,3r/\lambda)}}^p \, \text{d}\Gamma\Span{\varphi} \right)\\
        \leq & \quad 4^p C_3 \left( \Gamma\Span{f}(A) + \frac{1}{r^\beta} \int_{B(x,2r)} \abs{f - f_{B(x,3r/\lambda)}}^p \, \text{d}\mu \right).
    \end{align*}
    The first row follows from the strong locality, and the second from the triangle inequality and $\abs{a + b}^p \leq 2^{p-1}(\abs{a}^p + \abs{b}^p)$.
    We used the Leibniz rule in the third step, cf. \cite[Lemma 2.18]{EID2025},  $\abs{\varphi} \leq 1$ in the fourth step, and \eqref{eq:CS-Singular} in the fifth step.

    The second term on the right hand side is estimated as follows. Let $\mathcal{N} \subseteq B(x,2r)$ be an $(r/\lambda)$-net. Since $(X,d)$ is geodesic, we find for each $z \in \mathcal{N}$ a sequence $\{y_i\}_{i = 0}^k$ such that $y_0 = x$, $y_k = z$, $d(y_i,y_{i+1}) \leq r/\lambda$ for all $i$ and $k \leq 2\lambda$.
    Using this, we have
    \begin{align*}
        & \quad \, \int_{B(z,r/\lambda)} \abs{f - f_{B(x,3r/\lambda)}}^p \, \text{d}\mu\\
        \leq & \quad 2^{p-1}\left( \int_{B(z,r/\lambda)}\abs{f-f_{B(z,3r/\lambda)}}^p + \left( \sum_{i = 1}^k\abs{f_{B(y_{i-1},3r/\lambda)} - f_{B(y_{i},3r/\lambda)}} \,  \right)^p \text{d}\mu \right)\\
        \leq & \quad C_4 \mu(B(z,r/\lambda)) \left(\frac{r}{\lambda}\right)^\beta \left( \frac{\Gamma\Span{f}(B(z,3\sigma r/\lambda))}{\mu(B(z,3\sigma r/\lambda))} +   \left( \sum_{i = 1}^k \left\{\frac{\Gamma\Span{f}(B(y_i,3\sigma r/\lambda))}{\mu(B(y_i,3\sigma r/\lambda))}\right\}^{\frac{1}{p}}   \right)^p \right)\\
        \leq & \quad C_5 (\lambda^{d_{\textrm{H}}}\varepsilon + 1)\mu(B(z,r/\lambda)) k^p \left(\frac{r}{\lambda}\right)^\beta \leq 2C_5(\lambda^{d_{\textrm{H}}}\varepsilon + 1) r^\beta \lambda^{p-\beta} \mu(B(z,r/\lambda)).
    \end{align*}
    We used $\PI(\beta)$ in the third row and \eqref{eq:blow-up-Singular} in the fourth.
    By summing over all $z \in \mathcal{N}$
    \begin{align*}
         & \quad \,\frac{1}{r^\beta} \int_{B(x,r)} \abs{f - f_{B(x,3r/\lambda)}}^p \, \text{d}\mu\leq \frac{1}{r^\beta} \sum_{z \in \mathcal{N}} \int_{B(z,r/\lambda)} \abs{f - f_{B(x,3r/\lambda)}}^p \, \text{d}\mu\\
         \leq & \quad 2C_5(\lambda^{d_{\textrm{H}}}\varepsilon + 1) \lambda^{p-\beta}\sum_{z \in \mathcal{N}} \mu(B(z,r/\lambda)) \leq C_6 (\lambda^{d_{\textrm{H}}}\varepsilon + 1) \lambda^{p-\beta} \mu(\overline{B}).
    \end{align*}
    We used bounded overlapping in the the last inequality.
    
    Then estimate $\Gamma\Span{f}$ using \eqref{eq:Singular-LDT} and \eqref{eq:Annular-decay}.
    By taking $\mathcal{N}$ to be a $(3\sigma r/\lambda)$-net of $A$, we have
    \begin{align*}
        \Gamma\Span{f}(A) & \leq \sum_{z \in \mathcal{N}} \Gamma\Span{f}(B(z,3\sigma r/\lambda))\\
        & \leq C_2(\lambda^{d_{\textrm{H}}}\varepsilon + 1) \sum_{z \in \mathcal{N}} \mu(B(z,3\sigma r/\lambda)) \\
        & \leq C_7 (\lambda^{d_{\textrm{H}}}\varepsilon + 1) \mu(\overline{B}(x,(1+2\delta)r) \setminus B(x,(1- \delta/2) r ))\\
        & \leq C_8 (\lambda^{d_{\textrm{H}}}\varepsilon + 1) \delta^\alpha \mu(\overline{B}).
    \end{align*}

    By combining the estimates derived during the proof, we have
    \[
        \Gamma\Span{ f + (f_{B(x,3r/\lambda)} - f)\varphi }(\overline{B}) \leq C_9(\lambda^{d_\textrm{H}}\varepsilon + 1)\delta^\alpha \mu(\overline{B}) + C_{10} (\lambda^{d_{\textrm{H}}}\varepsilon + 1)\lambda^{p-\beta}\mu(\overline{B}).
    \]
    We now fix the choices of our constants.
    First, $\delta \in (0,1/8)$ is chosen so that $2C_9 \delta^\alpha < 1/4$. Then choose $\lambda \geq 8\delta^{-1}$ so that $2C_{10} \lambda^{p-\beta} < 1/4$. 
    Note that here it is essential that $\beta > p$.
    Lastly, choose $\varepsilon > 0$ so that $\lambda^{d_{\textrm{H}}}\varepsilon < 1$.
    On the other hand, by \eqref{eq:Singular-LDT} and the fact that $h(x) = 1$, we can choose $r > 0$ so that $\mu(\overline{B}) \leq 2\Gamma\Span{f}(\overline{B}) - 1/4\mu(\overline{B})$.
    Then we have
    \[
        \Gamma\Span{ f + (f_{B(x,3r/\lambda)} - f)\varphi }(\overline{B}) \leq \frac{\mu(\overline{B})}{2} \leq \Gamma\Span{f}(\overline{B}) - \frac{\mu(\overline{B})}{8}. 
    \]
    Since these choice are valid for arbitrarily small $r > 0$, the previous inequality contradicts Lemma \ref{lemma:Cheeger}, which completes the proof.
\end{proof}

\begin{proof}[Proof of Proposition \ref{prop:Singular}]
    Since every pair of minimal energy dominant measures are mutually absolutely continuous by definition, it is sufficient to prove the claim for any choice of $\Lambda$.
    Hence, by the proof of Proposition \ref{prop:MinEnergyDom}, it is sufficient to prove that $\Gamma\Span{f} \perp \mu$ for every $f \in \mathcal{F}$. Note that Lemma \ref{lemma:Singular} gives this for continuous $f$.
    
    Let $f \in \mathcal{F}$, and recall the partitions of unity $\{ \psi_U \}_{U \in \mathcal{U}_\varepsilon}$ from Assumption \ref{Assumption:Confdim}.
    Consider the approximations
    \[
        f_\varepsilon := \sum_{U \in \mathcal{U}_\varepsilon} \left( \kint_{U} f\, \text{d}\mu \right)\psi_U.
    \]
    By a similar argument as in the proof of Corollary \ref{cor:nu-minimal}, it holds that $f_r \to f$ in $L^p(X,\mu)$, and that $\mathcal{E}(f_r) \lesssim \mathcal{E}(f)$. Since $\Gamma\Span{f_r} \perp \mu$ for all $r > 0$, the mutual singularity $\Gamma\Span{f} \perp \mu$ now follows from the weak lower semicontinuity.
\end{proof}

\section{Non-attainment of conformal dimension}\label{sec:NonAttain}
This section proves the main result, Theorem \ref{main:Carpet}, about the non-attainment of conformal dimension.
It is generalized into the following, and see Remark \ref{remark:Explain} for a detailed explanation on how it implies Theorem \ref{main:Carpet}.
Uniform scalability is given in Definition \ref{def:SSS}.
\begin{theorem}\label{thm:non-attainment}
    Let $Q > 1$, $\beta \geq Q$, and $(X,d_X,\mu_X,\mathcal{E}_X,\mathcal{F}_X,\Gamma_X)$ be a $(Q,\beta)$-Poincar\'e--Dirichlet space satisfying Assumption \ref{Assumption:Confdim}.
    Also let $k \in \N$ with $k \geq 2$, and suppose $k \cdot d_{\textup{H}} = \beta$ where $d_{\textup{H}}$ is the Hausdorff dimension of $(X,d_X)$. Finally let $(Z,d)$ be $k$-fold Cartesian product of $(X,d_X)$ where $d$ is the $\ell^\infty$-metric.
    Then $Q$ is the conformal dimension of $(Z,d)$.
    Moreover, if $\beta > Q$ and $(X,d_X)$ is geodesic and uniformly scalable, then $(Z,d)$ does not attain its conformal dimension.
\end{theorem}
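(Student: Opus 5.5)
The plan has two parts: first identify the conformal dimension of $(Z,d)$, then argue by contradiction against attainment via blow-ups. For the first part, I apply Theorem \ref{thm:Z} inductively $k-1$ times. This makes the $k$-fold Cartesian product $(Z,d,\mu,\mathcal{E},\mathcal{F},\Gamma)$ with $\mu=\mu_X^{\otimes k}$ a $(Q,\beta)$-Poincar\'e--Dirichlet space. Since $(X,d_X)$ is $d_{\textup H}$-Ahlfors regular, the $\ell^\infty$-product is $k d_{\textup H}$-Ahlfors regular, and $\mu$ is comparable to its $kd_{\textup H}$-Hausdorff measure; replacing $\mu$ by the Hausdorff measure changes nothing essential. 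Compactness and bounded turning pass to $\ell^\infty$-products. Hence $(Z,d,\mu,\dots)$ satisfies Assumption \ref{Assumption:Confdim} with $d_{\textup H}(Z)=kd_{\textup H}=\beta$, and Corollary \ref{cor:confdim} gives that the conformal dimension is $Q$.

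For non-attainment, assume $\rho$ is a $Q$-Ahlfors regular metric on $Z$ quasisymmetric to $d$, and let $\nu$ be its $Q$-Hausdorff measure. By Corollary \ref{cor:nu-minimal}, $\nu$ is minimal energy dominant for the product space. By Corollary \ref{cor:Lambda-minimal} (inductively), so is
\[
\Lambda_Z:=\sum_{j=1}^k \mu_X^{\otimes (j-1)}\otimes\Lambda_X\otimes\mu_X^{\otimes(k-j)},
\]
where $\Lambda_X$ is minimal energy dominant on $X$. Hence $\nu$ and $\Lambda_Z$ are mutually absolutely continuous. Since $X$ is geodesic and $\beta>Q$, Proposition \ref{prop:Singular} gives $\Lambda_X\perp\mu_X$. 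Note also $\Lambda_X\neq0$: otherwise $\nu=0$.

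Next comes the blow-up. Let $h=\mathrm{d}(\Lambda_X\otimes\mu_X^{\otimes(k-1)})/\mathrm{d}\nu$, and choose a $\nu$-density point $z$ with $h(z)>0$. Using uniform scalability (Definition \ref{def:SSS}), which I expect to supply maps $\Phi_n:Z\to Z$ that are products of rescalings of $X$ shrinking onto cells around $z$ with uniformly controlled distortion, I form the pull-back metrics $\rho_n$ normalized to unit diameter and the normalized pulled-back measures $\nu_n$. The quasisymmetric control of $\rho$ versus $d$ and the scale invariance of $d$ give uniform quasisymmetry of $\rho_n$ to $d$ and uniform Ahlfors regularity estimates $\nu_n(B_{\rho_n}(x,r))\sim r^Q$. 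By Arzelà--Ascoli/Gromov compactness, a subsequence has $\rho_n\to\tilde\rho$ uniformly, with $\tilde\rho$ quasisymmetric to $d$, and $\nu_n\rightharpoonup\tilde\nu$ with $\tilde\nu(B_{\tilde\rho}(x,r))\sim r^Q$. Thus $\tilde\rho$ is again an attaining metric and $\tilde\nu$ is comparable to its Hausdorff measure.

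Simultaneously, the pulled-back $\Lambda_X$ along the first factor and $\mu_X$ along the others converge weakly to $\tilde\Lambda\otimes\mu_X^{\otimes(k-1)}$ (here $\mu_X$ is self-similar under scaling). Because $z$ is a density point of $h$, the part of $\nu$ not absolutely continuous w.r.t. $\Lambda_X\otimes\mu_X^{\otimes(k-1)}$ vanishes in the limit in total variation. Applying the previous paragraph to $\tilde\rho$ then yields
\[
\mu_X\otimes\Lambda_X\otimes\mu_X^{\otimes(k-2)}\ll\tilde\nu\ll\tilde\Lambda\otimes\mu_X^{\otimes(k-1)}.
\]
Projecting to the first two coordinates gives $\mu_X\otimes\Lambda_X\ll\tilde\Lambda\otimes\mu_X$, which forces $\Lambda_X\ll\mu_X$ (disintegrate in the first variable). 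This contradicts $\Lambda_X\perp\mu_X$ and $\Lambda_X\neq0$.

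The main obstacle is the last limit step: weak convergence does not preserve absolute continuity. I would first handle it by quantifying the density point, writing $\nu_n=\nu_n^{a}+\nu_n^{s}$ with $\nu_n^a\le C\,(\Lambda_X\otimes\mu_X^{\otimes(k-1)})_n$ and $\nu_n^s(Z)\to0$. A uniform density bound would then pass to the limit via the product-convergence fact from the introduction, giving $\tilde\nu\le C\tilde\Lambda\otimes\mu_X^{\otimes(k-1)}$. The bound $h\le C$ itself I would obtain by truncating at a Lebesgue point.
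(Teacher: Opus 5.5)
Your proposal is correct and follows essentially the paper's route. That route is Theorem \ref{thm:Z} with Corollary \ref{cor:confdim} for the dimension, then Corollaries \ref{cor:Lambda-minimal} and \ref{cor:nu-minimal}, a blow-up at a Lebesgue point of $h$ as in Lemmas \ref{lemma:Blow-up-1}--\ref{lemma:Blow-up-2}, and Proposition \ref{prop:Singular}. The only real variation is that you keep all $k$ factors and project, so you need singularity only on $X$ rather than on $Y=X^{k-1}$.

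Two small fixes. First, to get $\nu_n^a\le C\,\mathrm{m}_n$ the truncation must give a lower bound $h\ge 1/2$ on most of the mass, i.e.\ $\mathrm{d}\nu/\mathrm{d}(\Lambda_X\otimes\mu_X^{\otimes(k-1)})\le 2$ there, not $h\le C$. The paper avoids this more simply by showing $\nu_{x,\varepsilon}-\mathrm{m}_{x,\varepsilon}\to 0$ in total variation, so that $\tilde\nu=\tilde{\mathrm{m}}$. Second, uniform scalability only makes the blown-up $\mu_X$-factors comparable to $\mu_X^{\otimes(k-1)}$, not equal to it, which is all you need.
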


Theorem \ref{thm:non-attainment} is essentially a special case of Theorem \ref{thm:non-attainment-prod} where $Y = X^{k-1}$.

\begin{theorem}\label{thm:non-attainment-prod}
    Let $Q > 1$ and $\beta \geq Q$. Suppose that $(X,d_X,\mu_X,\mathcal{E}_X,\mathcal{F}_X,\Gamma_X)$ and $(Y,d_Y,\mu_Y,\mathcal{E}_Y,\mathcal{F}_Y,\Gamma_Y)$ are $(Q,\beta)$-Poincar\'e--Dirichlet spaces, both satisfying Assumption \ref{Assumption:Confdim}. Further, suppose that $\beta = d_{\textup{H},X} + d_{\textup{H},Y}$ where $d_{\textup{H},X}$ and $d_{\textup{H},Y}$ are Hausdorff dimensions of $(X,d_X)$ and $(Y,d_Y)$, respectively.
    Finally, let $(Z,d)$ be the Cartesian product of $(X,d_X)$ and $(Y,d_Y)$ where $d$ is the $\ell^\infty$-metric.
    Then $Q$ is the conformal dimension of $(Z,d)$.
    Moreover, if $\beta > Q$ and $(X,d_X)$, $(Y,d_Y)$ are geodesic and uniformly scalable, then $(Z,d)$ does not attain its conformal dimension.
\end{theorem}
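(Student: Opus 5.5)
We argue in two stages: first we identify the conformal dimension, then we show it is not attained by combining the energy-measure description of attaining metrics with a blow-up argument.

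\emph{Step 1: the value of the conformal dimension.} Equip $Z=X\times Y$ with the Cartesian product Dirichlet structure $(Z,d,\mu,\mathcal{E},\mathcal{F},\Gamma)$ of Definition \ref{def:Product}, where $\mu=\mu_X\otimes\mu_Y$. By Theorem \ref{thm:Z} this is a $(Q,\beta)$-Poincar\'e--Dirichlet space. We then check Assumption \ref{Assumption:Confdim} for $Z$:
\begin{itemize}
\item compactness and bounded turning pass to $\ell^\infty$-products;
\item $\mu$ is comparable on $d$-balls to $r^{d_{\textrm{H},X}+d_{\textrm{H},Y}}=r^{\beta}$, since balls in $d$ are products of balls;
\item hence $(Z,d)$ is $\beta$-Ahlfors regular, and $\mu$ is comparable to its $\beta$-Hausdorff measure (replacing $\mu$ by that Hausdorff measure is harmless up to constants).
\end{itemize}
Thus $Z$ satisfies Assumption \ref{Assumption:Confdim} with $d_{\textrm H}=\beta$, and Corollary \ref{cor:confdim} gives that the conformal dimension of $(Z,d)$ is $Q$.

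\emph{Step 2: structure of an attaining metric.} Assume now $\beta>Q$ and the geodesic and scalability hypotheses, and suppose for contradiction that $\rho$ is an attaining metric with $Q$-Hausdorff measure $\nu$. By Corollary \ref{cor:nu-minimal}, $\nu$ is minimal energy dominant for the product structure. By Corollary \ref{cor:Lambda-minimal}, $\nu$ is mutually absolutely continuous with $\Lambda_X\otimes\mu_Y+\mu_X\otimes\Lambda_Y$, where $\Lambda_X,\Lambda_Y$ are minimal energy dominant on the factors. By Proposition \ref{prop:Singular}, applied with $p=Q<\beta$ and using that the factors are geodesic, we get $\Lambda_X\perp\mu_X$ and $\Lambda_Y\perp\mu_Y$. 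Moreover $\Lambda_X,\Lambda_Y\neq0$: a nonconstant $f\in\mathcal{F}_X$ exists by $\cCap(\beta)$, and it has nonzero energy by $\PI(\beta)$. Consequently $h:=\mathrm{d}(\Lambda_X\otimes\mu_Y)/\mathrm{d}\nu$ is positive on a set of positive $\nu$-measure. Choose a $\nu$-density point $z=(x,y)$ of $\{h\geq c\}$ for some $c>0$, so that $\nu$ restricted to small neighbourhoods of $z$ is dominated by $c^{-1}\Lambda_X\otimes\mu_Y$, up to an error that is relatively small in total variation.

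\emph{Step 3: blow-up at $z$.} Using uniform scalability of $X$ and $Y$, pick maps $\Phi_n=\phi_n\times\psi_n\colon Z\to Z$ whose images are neighbourhoods of $z$ shrinking to $z$. Let $\rho_n$ be the normalized pull-backs of $\rho$ and $\nu_n$ the correspondingly normalized pull-backs of $\nu$. Each $\phi_n,\psi_n$ distorts $d$ uniformly (up to scaling), so every $\rho_n$ is $\eta'$-quasisymmetric to $d$ with $\eta'$ independent of $n$. Ahlfors regularity of $\rho$ then gives $\nu_n(B_{\rho_n}(w,r))\sim r^Q$ uniformly in $n$. By Gromov compactness (the spaces are uniformly doubling) and Arzel\`a--Ascoli, a subsequence gives a metric $\tilde\rho$ that is quasisymmetric to $d$, together with a weak limit $\tilde\nu$ of $\nu_n$ satisfying $\tilde\nu(B_{\tilde\rho}(w,r))\sim r^Q$. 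Hence $\tilde\rho$ is again an attaining metric, and its Hausdorff measure is comparable to $\tilde\nu$.

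For the absolute continuity, we note that the normalized pull-backs of $\Lambda_X\otimes\mu_Y$ are products $\Lambda_{X,n}\otimes\mu_{Y,n}$. Here $\mu_{Y,n}$ is comparable to $\mu_Y$ by scalability, and after normalization $\Lambda_{X,n}$ has a weak limit $\tilde\Lambda$. The domination $\nu_n\leq c^{-1}\Lambda_{X,n}\otimes\mu_{Y,n}+(\text{error}_n)$ holds with $\|\text{error}_n\|\to0$ by the density-point choice. An inequality between measures is preserved under weak convergence, and products of weakly convergent measures converge weakly to the product of the limits on compact spaces. Hence $\tilde\nu\leq C\,\tilde\Lambda\otimes\mu_Y$.

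\emph{Step 4: contradiction.} Apply Step 2 to the attaining metric $\tilde\rho$. This gives
\[
\mu_X\otimes\Lambda_Y\ll\tilde\nu\ll\tilde\Lambda\otimes\mu_Y .
\]
Let $E\subseteq Y$ be a $\mu_Y$-null set carrying $\Lambda_Y$; it exists because $\Lambda_Y\perp\mu_Y$. Then $(\tilde\Lambda\otimes\mu_Y)(X\times E)=0$, so
\[
0=(\mu_X\otimes\Lambda_Y)(X\times E)=\mu_X(X)\,\Lambda_Y(Y).
\]
This forces $\Lambda_Y=0$, contradicting Step 2.

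\emph{Main obstacle.} The difficult step is Step 3, in three respects:
\begin{itemize}
\item obtaining quasisymmetry and Ahlfors-regularity constants that are uniform along the blow-up;
\item identifying the limit of the rescaled $\mu_Y$;
\item transferring absolute continuity through weak limits.
\end{itemize}
The last point is handled by replacing absolute continuity with a quantitative domination inequality plus a vanishing error, which survives weak limits. The uniformity of the constants is exactly where the uniform scalability hypothesis (Definition \ref{def:SSS}) must be used.
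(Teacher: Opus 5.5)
Your proposal is correct and follows the paper's proof essentially step for step. The steps are: the product Dirichlet structure plus Corollary~\ref{cor:confdim} for the value $Q$; minimal energy dominance of the Hausdorff measure of any attaining metric via Corollaries~\ref{cor:Lambda-minimal} and~\ref{cor:nu-minimal}; a blow-up at a density point of $h=\mathrm{d}(\Lambda_X\otimes\mu_Y)/\mathrm{d}\nu$ using uniform scalability; and the contradiction with $\Lambda_Y\perp\mu_Y$ from Proposition~\ref{prop:Singular}. The one difference is how absolute continuity survives the weak limit: you use a one-sided domination with vanishing error, whereas the paper uses total-variation convergence $\nu_{x,\varepsilon}-\mathrm{m}_{x,\varepsilon}\to 0$, which gives $\tilde\nu=\tilde{\mathrm{m}}$ exactly. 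For your version to yield a weak limit $\tilde\Lambda$ with a uniform constant, you should also take $z$ to be a Lebesgue point of $h$, so that $\nu(\Phi_n(Z))^{-1}(\Lambda_X\otimes\mu_Y)(\Phi_n(Z))$ stays bounded; this is exactly the paper's normalization $h(x)=1$.
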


Taking Theorem \ref{thm:non-attainment-prod} as given, we prove Theorem \ref{thm:non-attainment} as follows.

\begin{proof}[Proof of Theorem \ref{thm:non-attainment}] By repeatedly applying Theorem \ref{thm:Z}, we obtain a $(Q,\beta)$-Poincar\'e--Dirichlet space $(Y,d_Y,\mu_Y,\mathcal{E}_Y,\mathcal{F}_Y,\Gamma_Y)$ so that $(Y,d_Y)$ is the $(k-1)$-fold Cartesian product of $(X,d_X)$.
It is routine verify the conditions in Assumption \ref{Assumption:Confdim}, and that the Hausdorff dimension of $(Y,d_Y)$ is equal to $d_{\textrm{H},Y} := (k-1)\cdot d_{\textrm{H}}$.
Hence, if $(Z,d)$ is the $k$-fold Cartesian product, it follows from Theorem \ref{thm:non-attainment-prod} that $Q$ is the conformal dimension of $(Z,d)$.

Since $(X,d_X)$ is geodesic, so is $(Y,d_Y)$.
It is direct to check that $(Y,d_Y)$ is uniformly scalable if $(X,d_X)$ is.
Since $\beta > Q$ by the hypotheses, the non-attainment of the $k$-fold product $(Z,d)$ now follows from Theorem \ref{thm:non-attainment-prod}.
    
\end{proof}

\subsection{Blow-up procedure}
The remaining objective of the work is the proof of Theorem \ref{thm:non-attainment-prod}.
Most of the techniques required have already been developed in the previous sections. Our argument requires one additional ingredient, which is a suitable blow-up method.
In the following definition, we introduce a general self-similarity condition under which we can take nice limits along blow-up sequences.
This condition is essentially a variant of the notion of approximate self-similarity introduced in \cite{KleinerICM}.

\begin{definition}\label{def:SSS}
    We say that a metric space $(X,d)$ is \emph{uniformly scalable} if there is a constant $L \geq 1$ such that, for every $x \in X$ and $\varepsilon > 0$, there is $\Phi_{x,\varepsilon} : X \to B(x,\varepsilon)$ satisfying the following conditions.
    \begin{enumerate}
        \item $x \in \Phi_{x,\varepsilon}(X)$.
        \item For every $y \in X$ and $r \in (0,2]$ there is $z \in X$ such that 
        \[
        \Phi_{x,\varepsilon}(B(y,r)) \supseteq B(z,\varepsilon r/L).
        \]
        \item For every $y,z \in X$ we have
        \[
            L^{-1} \frac{d(y,z)}{\varepsilon} \leq d(\Phi_{x,\varepsilon}(y),\Phi_{x,\varepsilon}(z)) \leq L \frac{d(y,z)}{\varepsilon}.
        \]
    \end{enumerate}
\end{definition}

Assume the hypotheses of Theorem \ref{thm:non-attainment-prod}.
For brevity, we denote by $\Phi_{x,\varepsilon}$ the maps provided by the uniform scalability, regardless whether the ambient space is $X$ or $Y$.
For $x = (x_1,x_2) \in Z$ we define the map $\Phi_{x,\varepsilon} : Z \to B(z,\varepsilon)$ by
\[
    (y_1,y_2) \mapsto (\Phi_{x_1,\varepsilon}(y_1),\Phi_{x_2,\varepsilon}(y_2)).
\]

Recall that a Radon measure has \emph{full topological support} if every non-empty open set has positive measure.
If $\textrm{m}$ is a Radon measure on $(Z,d)$ of full topological support, we denote the normalized push-forward measure by
\[
    \mathrm{m}_{x,\varepsilon} := \frac{ (\Phi_{x,\varepsilon}^{-1})_*(\textrm{m}) }{ \textrm{m}(\Phi_{x,\varepsilon}(Z)) }.
\]

\begin{lemma}\label{lemma:Blow-up-1}
    Suppose that the hypotheses of Theorem \ref{thm:non-attainment-prod} hold.
    Let $\Lambda_X$ be a Radon measure on $(X,d_X)$ with full topological support, $\textup{m} := \Lambda_X \otimes \mu_Y$, $x = (x_1,x_2) \in Z$ and $\{\varepsilon_i\}_{i = 1}^\infty \subseteq (0,1)$ with $\varepsilon_i \downarrow 0$.
    Then there is a subsequence $\{\tilde{\varepsilon}_j\}_{j = 1}^\infty  \subseteq \{\varepsilon_i\}_{i = 1}^\infty$ and a Radon measure $\tilde{\Lambda}_X$ on $(X,d_X)$ such that $\textup{m}_{x,\tilde{\varepsilon}_j}$ converges weakly to a Radon measure which is mutually absolutely continuous with $\tilde{\Lambda}_X \otimes \mu_Y$.
\end{lemma}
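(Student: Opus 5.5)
The plan is to exploit the product structure of the blow-up maps. Since $\Phi_{x,\varepsilon}=(\Phi_{x_1,\varepsilon},\Phi_{x_2,\varepsilon})$ and $\textup{m}=\Lambda_X\otimes\mu_Y$, the pull-back is a product: for Borel $A\subseteq X$, $B\subseteq Y$,
\[
(\Phi_{x,\varepsilon}^{-1})_*(\textup{m})(A\times B)=\Lambda_X(\Phi_{x_1,\varepsilon}(A))\,\mu_Y(\Phi_{x_2,\varepsilon}(B)),
\]
where I read $(\Phi^{-1})_*\textup{m}$ as $E\mapsto \textup{m}(\Phi(E))$ (the $\Phi$ are injective by condition (3) of Definition \ref{def:SSS}, and Borel images are Borel as $X$ is compact). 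The normalization also factors because $\Phi_{x,\varepsilon}(Z)=\Phi_{x_1,\varepsilon}(X)\times\Phi_{x_2,\varepsilon}(Y)$. Hence
\[
\textup{m}_{x,\varepsilon}=(\Lambda_X)_{x_1,\varepsilon}\otimes(\mu_Y)_{x_2,\varepsilon},
\]
with the factor measures defined by the same formula on $X$ and $Y$. Each factor is a probability measure on a compact space, so Prokhorov gives a common subsequence $\tilde\varepsilon_j$ along which $(\Lambda_X)_{x_1,\tilde\varepsilon_j}\rightharpoonup\tilde\Lambda_X$ and $(\mu_Y)_{x_2,\tilde\varepsilon_j}\rightharpoonup\tilde\mu_Y$. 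Products of weakly convergent probability measures on compact metric spaces converge weakly to the product: test on $g(u)h(v)$ with $g,h$ continuous, and use Stone--Weierstrass density of finite sums of such products in $C(Z)$ plus uniform boundedness of total masses. Thus $\textup{m}_{x,\tilde\varepsilon_j}\rightharpoonup\tilde\Lambda_X\otimes\tilde\mu_Y$.

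It remains to show $\tilde\mu_Y$ is mutually absolutely continuous with $\mu_Y$; then $\tilde\Lambda_X\otimes\tilde\mu_Y$ is mutually absolutely continuous with $\tilde\Lambda_X\otimes\mu_Y$ (product of the Radon--Nikodym densities). I will prove the stronger statement that $\tilde\mu_Y$ is comparable to $\mu_Y$, with constants depending only on $L$ and the Ahlfors regularity constants of $Y$. For a ball $B(y,r)$ in $Y$, condition (3) gives $\Phi_{x_2,\varepsilon}(B(y,r))\subseteq B(\Phi_{x_2,\varepsilon}(y),L\varepsilon r)$, and condition (2) gives that it contains a ball of radius $\varepsilon r/L$ (for $r\le 2$; larger $r$ is handled by rescaling, since $Y$ is bounded). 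Ahlfors $d_{\textup{H},Y}$-regularity of $\mu_Y$ then yields $\mu_Y(\Phi_{x_2,\varepsilon}(B(y,r)))\sim(\varepsilon r)^{d_{\textup{H},Y}}$. Applying this with $B(y,r)\supseteq Y$ (so $r\sim\diam Y$) gives $\mu_Y(\Phi_{x_2,\varepsilon}(Y))\sim\varepsilon^{d_{\textup{H},Y}}$. Therefore $(\mu_Y)_{x_2,\varepsilon}(B(y,r))\sim r^{d_{\textup{H},Y}}\sim\mu_Y(B(y,r))$ uniformly in $\varepsilon$.

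Passing to the weak limit, using open balls for lower bounds and closed balls (or slightly larger open balls) for upper bounds together with doubling, gives $\tilde\mu_Y(B(y,r))\sim\mu_Y(B(y,r))$ for all $y$ and $r$. A standard Vitali covering argument for doubling measures then gives $C^{-1}\mu_Y\le\tilde\mu_Y\le C\mu_Y$ on Borel sets, hence mutual absolute continuity.

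I expect the only delicate points to be the measure-theoretic bookkeeping: Borel measurability of images, and the fact that weak limits preserve ball estimates only up to open/closed boundary issues, which doubling absorbs. Note that no absolute continuity information is needed about $\tilde\Lambda_X$, since the statement only requires some Radon limit $\tilde\Lambda_X$, and the full support of $\Lambda_X$ is used only to make the normalizing constants nonzero.
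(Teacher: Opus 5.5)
Your proof is correct and follows the paper's: you factor $\textup{m}_{x,\varepsilon}$ into normalized marginals, extract weakly convergent marginals by compactness, pass to the product limit, and show the $Y$-marginal limit $\tilde\mu_Y$ is comparable to $\mu_Y$ uniformly in $\varepsilon$. Two small remarks. First, the paper gets the comparability more directly: by condition (3) (read, as you do, with scaling factor $\varepsilon$) and because $\mu_Y$ is the $d_{\textup{H},Y}$-Hausdorff measure, one has $\mu_Y(\Phi_{x_2,\varepsilon}(E))\sim\varepsilon^{d_{\textup{H},Y}}\mu_Y(E)$ for every Borel $E$, and this passes to weak limits with no ball or Vitali argument. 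Second, in your ball route the portmanteau directions are swapped: open balls give upper bounds for the limit measure and closed balls give lower bounds, though doubling absorbs this.
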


\begin{proof}
    Note that $\textrm{m}_{x,\varepsilon}$ can be written as
    \[
        \textrm{m}_{x,\varepsilon} = \left( \frac{ (\Phi_{x_1,\varepsilon}^{-1})_*(\Lambda_X) }{ \Lambda_X(\Phi_{x_1,\varepsilon}(X)) } \right) \otimes \left( \frac{ (\Phi_{x_2,\varepsilon}^{-1})_*(\mu_Y) }{ \mu_Y(\Phi_{x_2,\varepsilon}(Y)) }  \right)
    \]
    By the compactness of $(X,d_X)$ and $(Y,d_Y)$, there is a subsequence $\{ \tilde{\varepsilon}_j \}_{j = 1}^\infty \subseteq \{\varepsilon_i\}_{i = 1}^\infty$ such that the marginals of $\textrm{m}_{x,\tilde{\varepsilon}_j}$ converge weakly to some probability Radon measures $\tilde{\Lambda}_X$ and $\tilde{\mu}_Y$, respectively.
    Then $\textrm{m}_{x,\tilde{\varepsilon}_j}$ converges weakly to $\tilde{\Lambda}_X \otimes \tilde{\mu}_Y$. Moreover, the second marginals in the above display are uniformly comparable to the $d_{\textrm{H},Y}$-Hausdorff measure $\mu_Y$ of $(Y,d_Y)$.
    This follows from Definition \ref{def:SSS}-(3) and the $d_{\textrm{H},Y}$-Ahlfors regularity of $(Y,d_Y)$. In particular, $\tilde{\mu}_Y$ is also comparable to $\mu_Y$, which shows the required mutual absolute continuity.
\end{proof}


\begin{lemma}\label{lemma:Blow-up-2}
    Suppose that the hypotheses of Theorem \ref{thm:non-attainment-prod} hold.
    Let $\rho$ be a metric on $Z$, quasisymmetric to $d$, so that $(Z,\rho)$ is $Q$-Ahlfors regular and $\{\varepsilon_i\}_{i = 1}^\infty \subseteq (0,1)$ with $\varepsilon_i \downarrow 0$. Let $\nu$ be the $Q$-Hausdorff measure of $(Z,\rho)$ and $x \in Z$.
    Then there is a metric $\tilde{\rho}$ on $Z$, quasisymmetric to $d$, a Radon probability measure $\tilde{\nu}$ on $(Z,d)$, and a subsequence $\{ \tilde{\varepsilon}_j \}_{j = 1}^\infty \subseteq \{\varepsilon_i\}_{i = 1}^\infty$ so that the following hold.
    The metric space $(Z,\tilde{\rho})$ is $Q$-Ahlfors regular, the Radon measures $\nu_{x,\tilde{\varepsilon}_j}$ converge weakly to $\tilde{\nu}$ and $\tilde{\nu}$ is comparable to the $Q$-Hausdorff measure of $(Z,\tilde{\rho})$.
\end{lemma}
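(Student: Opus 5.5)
The plan is to define rescaled metrics $\rho_j$ on $Z$ by pulling back $\rho$ along $\Phi_{x,\varepsilon_j}$ and normalizing, to extract a limit metric $\tilde{\rho}$ by an Arzel\`a--Ascoli argument, and then to verify the measure statements. Concretely, set
\[
    \rho_j(y,z) := \frac{\rho(\Phi_{x,\varepsilon_j}(y),\Phi_{x,\varepsilon_j}(z))}{\diam_\rho(\Phi_{x,\varepsilon_j}(Z))}.
\]

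\textbf{Step 1: uniform quasisymmetry.} By Definition \ref{def:SSS}-(3), each $\Phi_{x,\varepsilon}$ is $L^2$-bi-Lipschitz onto its image after scaling $d$ by $1/\varepsilon$. Since $d$ is $\eta$-quasisymmetric to $\rho$, each $\rho_j$ is $\eta'$-quasisymmetric to $d$, with $\eta'$ depending only on $\eta$ and $L$. Each $\rho_j$ has diameter one, and $(Z,d)$ is compact and bounded turning (hence uniformly perfect). Standard facts then give that the identity maps $(Z,d)\to(Z,\rho_j)$ are uniformly equicontinuous, with uniform H\"older-type bounds $\rho_j(y,z)\le C\,\omega(d(y,z))$ and $\rho_j(y,z)\ge c\,\omega'(d(y,z))$. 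Arzel\`a--Ascoli applied to $\rho_j$ as functions on the compact set $Z\times Z$ gives a subsequence converging uniformly to a pseudometric $\tilde{\rho}$. The uniform lower bound makes $\tilde{\rho}$ a metric, and passing to the limit in the three-point inequality shows it is $\eta'$-quasisymmetric to $d$.

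\textbf{Step 2: uniform Ahlfors regularity of the rescaled measures.} Consider the normalized push-forward $\nu_{x,\varepsilon}$. For a $\rho_j$-ball $B_{\rho_j}(y,s)$, its image under $\Phi$ is a $\rho$-ball of radius $s\cdot\diam_\rho(\Phi(Z))$ intersected with $\Phi(Z)$. Definition \ref{def:SSS}-(2), transferred through quasisymmetry, shows that $\Phi(Z)$ contains a $\rho$-ball comparable in size to its diameter. Hence $\Phi(Z)$ is roughly a $\rho$-ball of radius $R_j:=\diam_\rho(\Phi(Z))$, and $\nu(\Phi(Z))\sim R_j^Q$. Moreover $\Phi(B_{\rho_j}(y,s))$ contains and is contained in $\rho$-balls of radius comparable to $sR_j$, again by property (2) and quasisymmetry. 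Ahlfors regularity of $\nu$ then yields
\[
\nu_{x,\varepsilon_j}(B_{\rho_j}(y,s))\sim s^Q
\]
uniformly in $j$, for $s\le 2$.

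\textbf{Step 3: passing to the limit.} Pass to a further subsequence along which $\nu_{x,\varepsilon_j}\rightharpoonup\tilde{\nu}$, a probability measure; this is possible by compactness. Since $\rho_j\to\tilde{\rho}$ uniformly, for $j$ large we have the inclusions $B_{\tilde\rho}(y,s/2)\subseteq B_{\rho_j}(y,s)\subseteq B_{\tilde\rho}(y,2s)$. Combining these with the portmanteau inequalities for open and closed sets gives $\tilde{\nu}(B_{\tilde\rho}(y,s))\sim s^Q$. A measure with this property is comparable to $\mathcal{H}^Q_{\tilde\rho}$, and the latter is Ahlfors regular by the standard covering argument \cite{He}. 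So $(Z,\tilde\rho)$ is $Q$-Ahlfors regular.

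\textbf{Main obstacle.} I expect the hard part to be Step 2: showing that images of $\rho_j$-balls are comparable to genuine $\rho$-balls uniformly in $j$. This needs property (2) of uniform scalability together with quasisymmetric distortion of balls, in the form of \eqref{eq:QS-balls}, applied carefully at all scales up to the diameter.
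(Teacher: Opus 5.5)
Your proposal is correct and follows the paper's proof essentially step by step. The paper also uses diameter-normalized pull-back metrics $\rho_\varepsilon$, uniform quasisymmetry with upper and lower modulus bounds feeding into Arzel\`a--Ascoli, and uniform bounds $\nu_{x,\varepsilon}(B_{\rho_\varepsilon}(y,s))\sim s^Q$ obtained from Definition \ref{def:SSS}-(2),(3) and quasisymmetric distortion of balls; your Step 3 (ball inclusions from uniform convergence plus the portmanteau inequalities) simply makes explicit the limit passage that the paper compresses into ``sending $j\to\infty$''.
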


\begin{proof}
    The subscript $x$ is dropped during the proof to ease the notation. We also denote $D_\varepsilon := \diam_{\rho}(\Phi_{\varepsilon}(Z))$.
    The constant $L$ is as in Definition \ref{def:SSS}, and $\rho$ is assumed to be $\eta_0$-quasisymmetric to $d$.
    Without loss of generality, we assume $\diam_d(Z) = 1$.
    
    The metric $\tilde{\rho}$ is constructed first.
    For $\varepsilon > 0$ consider the pull-back metrics
    \[
        \rho_{\varepsilon}(y,z) :=  D_\varepsilon^{-1}\rho(\Phi_{\varepsilon}(y),\Phi_{\varepsilon}(z)).
    \]
    Then the metric spaces $(Z,\rho_{\varepsilon})$ have diameters equal to 1.
    Moreover, a direct computation shows that $\rho_{\varepsilon}$ are $\eta$-quasisymmetric to $d$ where $\eta$ only depends on $\eta_0$ and $L$.
    Using this and $\diam_d(Z) = 1$, we compute
    \begin{align*}
        \abs{\rho_{\varepsilon}(y_1,z_1) - \rho_{\varepsilon}(y_2,z_2)} & \leq \rho_{\varepsilon}(y_1,y_2) + \rho_{\varepsilon}(z_1,z_2)\\
        & \leq \eta\left(2d(y_1,y_2) \right) + \eta\left(2d(z_1,z_2) \right).
    \end{align*}
    The previous display and the continuity of $\eta$ implies that $\{ \rho_{\varepsilon} \}_{\varepsilon \in (0,1)} \subseteq C(Z \times Z)$ is equicontinuous.
    We also have a lower-bound estimate
    \[
        \rho_{\varepsilon}(y,z) \geq \frac{1}{2} \tilde{\eta}\left( d(y,z) \right)
    \]
    where $\tilde{\eta} : [0,\infty) \to [0,\infty)$ is an increasing homeomorphism depending only on $\eta$. This follows from \cite[Proposition 10.6]{He} where it is shown that quasisymmetry is a quantitative symmetric relation.
    In particular, $\inf_{\varepsilon > 0} \rho_\varepsilon(x,y) > 0$ whenever $x \neq y$. Now, by applying the Arzela--Ascoli theorem, we obtain a metric $\tilde{\rho}$ on $Z$ and a subsequence $\{ \tilde{\varepsilon}_j\}_{j = 1}^\infty \subseteq \{ \varepsilon_i\}_{i = 1}^\infty$ so that $\rho_{\tilde{\varepsilon}_j}$ converges uniformly to $\tilde{\rho}$.
    It follows from the uniform convergence that $\tilde{\rho}$ is quasisymmetric to $d$.

    By taking a subsequence of $\{\tilde{\varepsilon}_j\}_{j = 1}^\infty$ if necessary, we may assume that $\nu_{x,\tilde{\varepsilon}_j}$ has a weak limit $\tilde{\nu}$.
    The remaining conclusions in the claim follow once we verify that $(X,\tilde{\rho},\tilde{\nu})$ is $Q$-Ahlfors regular in the sense that there is $C \geq 1$ such that
    \[
       C^{-1} r^Q \leq \nu( B_{\tilde{\rho}}(y,r) ) \leq C r^Q \text{ for every } x \in X \text{ and } r \in (0,2].
    \]
    We first prove the analogous property for $(X,\rho_{\varepsilon},\nu_{\varepsilon})$.
    Let $y,r$ be as in the display. First, we note $\Phi_\varepsilon(Z)$ contains a ball $B_\rho(z,\delta D_\varepsilon)$ where $\delta \in (0,1)$ depends only on $L$ and $\eta_0$. This follows from \eqref{eq:QS-balls} by applying (2) and (3) in Definition \ref{def:SSS}, and the fact that $\rho$ is $\eta_0$-quasisymmetric to $d$.
    The upper-bound is obtained from
    \[
        \nu_{\varepsilon}(B_{\rho_{\varepsilon}}(y,r)) = \frac{\nu( B_\rho(\Phi_{\varepsilon}(y),r D_\varepsilon )  ) \cap \Phi_{\varepsilon}(Z)) )}{\nu(\Phi_\varepsilon(Z))} \leq \frac{\nu( B_\rho(\Phi_{\varepsilon}(y),r D_\varepsilon ))}{\nu(B_\rho(z,\delta D_\varepsilon))} \leq Cr^Q.
    \]
    Again, by (2) and (3) in Definition \ref{def:SSS} and $\rho$ being $\eta_0$-quasisymmetric to $d$, we have
    \[
        \Phi_{\varepsilon}(B_{\rho_{\varepsilon}}(y,r)) \supseteq B_\rho(z,\delta r D_\varepsilon )
    \]
    for some $z \in X$, where $\delta \in (0,1)$ depends only on $L$ and $\eta_0$. The lower-bound now follows from
    \[
        \nu_{\varepsilon}(B_{\rho_{\varepsilon}}(y,r)) = \frac{\nu( B_\rho(\Phi_{\varepsilon}(y),r D_\varepsilon )  ) \cap \Phi_{\varepsilon}(Z)) )}{\nu(\Phi_\varepsilon(Z))} \geq \frac{\nu(B_\rho(z,\delta r D_\varepsilon ))}{\nu(B_\rho(z,2D_\varepsilon))} \geq C^{-1} r^Q.
    \]
    Finally, the desired $Q$-Ahlfors regularity follows by sending $j \to \infty$ and $\tilde{\varepsilon}_j \downarrow 0$ because $C$ in the above inequalities does not depend on $\varepsilon$.
\end{proof}

\subsection{Proof of the non-attainment}

We are finally ready to prove the non-attainment result for product spaces.

\begin{proof}[Proof of Theorem \ref{thm:non-attainment-prod}]
    Let $(Z,d,\mu,\mathcal{E},\mathcal{F},\Gamma)$ be as in Definition \ref{def:Product} where the factors are  $(X,d_X,\mu_X,\mathcal{E}_X,\mathcal{F}_X,\Gamma_X)$ and $(Y,d_Y,\mu_Y,\mathcal{E}_Y,\mathcal{F}_Y,\Gamma_Y)$.
    By Theorem \ref{thm:Z}, $(Z,d,\mu,\mathcal{E},\mathcal{F},\Gamma)$ is a $(Q,\beta)$-Poincar\'e--Dirichlet space satisfying the conditions in Assumption \ref{Assumption:Confdim} and its Hausdorff dimension is equal to $d_{\textup{H}} := d_{\textup{H},X} + d_{\textup{H},Y}$.
    Since $\beta = d_{\textup{H}}$ by the hypotheses, it follows from Corollary \ref{cor:confdim} that $Q$ is equal to the conformal dimension of $(Z,d)$.

    The remaining objective is to prove the non-attainment result.
    We proceed by contraposition, i.e., assume that there exists a metric $\rho$ on $Z$, quasisymmetric to $d$, so that $(Z,\rho)$ is $Q$-Ahlfors regular. The $Q$-Hausdorff measure of $(Z,\rho)$ is denoted by $\nu$. We also fix a minimal energy dominant measure $\Lambda_X$ of $(X,d_X,\mu_X,\mathcal{E}_X,\mathcal{F}_X,\Gamma_X)$ and $\Lambda_Y$ of $(Y,d_Y,\mu_Y,\mathcal{E}_Y,\mathcal{F}_Y,\Gamma_Y)$.
    By Corollary \ref{cor:Lambda-minimal} and Corollary \ref{cor:nu-minimal}, it holds that $\textup{m} := \Lambda_X \otimes \mu_Y \ll \nu$. The Radon--Nikodym derivative of $\Lambda_X \otimes \mu_Y$ with respect to $\nu$ is denoted by $h$.
    
    Since $\nu$ is a doubling measure on $(Z,d)$, it follows from the Lebesgue differentiation theorem  that $h$ has a Lebesgue point $x \in Z$ where $h(x) > 0$. After a rescaling, we may assume that $h(x) = 1$.
    By using Definition \ref{def:SSS}-(2), we get
    \begin{equation*}
        \lim_{\varepsilon \downarrow 0} \kint_{\Phi_{x,\varepsilon}(Z)} \abs{h - 1}\, \text{d}\nu = 0.
    \end{equation*}
    Then, for every Borel set $\Omega \subseteq Z$, we have
    \[
        \abs{\nu_{x,\varepsilon}(\Omega) - \textrm{m}_{x,\varepsilon}(\Omega)} \leq \frac{1}{\nu(\Phi_{x,\varepsilon}(Z))} \int_\Omega \abs{h - 1}\, \text{d}\nu \leq \kint_{\Phi_{x,\varepsilon}(Z)} \abs{h - 1}\, \text{d}\nu.
    \]
    The combination of the previous two displays shows that
    \begin{equation}\label{eq:Blow-up-TV}
        \nu_{x,\varepsilon} - \textrm{m}_{x,\varepsilon} \to 0 \text{ in total variation as $\varepsilon \downarrow 0$.}
    \end{equation}
    Now, we use the earlier lemmas to obtain suitable convergences.
    Take any sequence $\{ \varepsilon_i \}_{i = 1}^\infty \subseteq (0,1)$ with $\varepsilon \downarrow 0$.
    By Lemma \ref{lemma:Blow-up-1}, there is a Radon measure $\tilde{\Lambda}_X$ and a subsequence $\{\tilde{\varepsilon}_j\}_{j = 1}^\infty \subseteq \{\varepsilon_i\}_{i = 1}^\infty$ so that
    \begin{equation}\label{eq:Blow-up-m}
    \textrm{m}_{x,\tilde{\varepsilon}_j} \rightharpoonup \tilde{\textrm{m}} \ll \tilde{\Lambda}_X \otimes \mu_Y.    
    \end{equation}
    By Lemma \ref{lemma:Blow-up-2}, there is a metric $\tilde{\rho}$, quasisymmetric to $d$, so that $(Z,\tilde{\rho})$ is $Q$-Ahlfors regular. Additionally, by taking further subsequence if necessary, 
    \begin{equation}\label{eq:Blow-up-nu}
        \nu_{x,\tilde{\varepsilon}_j} \rightharpoonup \tilde{\nu} \text{ and } \tilde{\nu} \text{ is comparable to the $Q$-Hausdorff measure on $(Z,\tilde{\rho})$.}
    \end{equation}
 
    Finally, we obtain the chain of absolute continuities
    \begin{align*}
        \mu_X \otimes \Lambda_Y \ll \tilde{\nu} \ll \tilde{\textup{m}} \ll \tilde{\Lambda}_X \otimes \mu_Y.
    \end{align*}
    The first one follows from Proposition \ref{cor:Lambda-minimal} and Corollary \ref{cor:nu-minimal}. For the second one, we simply have $\tilde{\nu} = \tilde{\textup{m}}$ by Equations \eqref{eq:Blow-up-TV}, \eqref{eq:Blow-up-m} and \eqref{eq:Blow-up-nu}. The last one is just \eqref{eq:Blow-up-m}.
    However, since $\mu_Y \perp \Lambda_Y$ by Proposition \ref{prop:Singular}, we also have $\mu_X \otimes \Lambda_Y \perp \tilde{\Lambda}_X \otimes \mu_Y$.
    This combined with the previous display gives $\mu_X \otimes \Lambda_Y = 0$, which is obviously false. Hence, the metric  $\rho$ attaining the conformal dimension cannot exist, and this completes the proof.
\end{proof}

\subsection{Concluding remarks}\label{subsec:Remarks}

The work concludes to a discussion about examples and some further discussion about Theorem \ref{thm:non-attainment}.

\begin{remark}\label{remark:Explain}
This remark explains how to verify the hypotheses of
Theorem \ref{thm:non-attainment} for the Sierpi\'nski carpet $\mathbb{S}$.
This, in particular, proves Theorem \ref{main:Carpet}. First, $\mathbb{S}$ is bi-Lipschitz to a geodesic metric space, and the maps in the uniform scalability condition are provided by the natural similarity maps. Thus, the geometric conditions have been checked.
Next we verify the existence of the required Dirichlet space and its properties.
By the work of Murugan and Shimizu \cite{MuruganShimizu}, for every $p \in (1,\infty)$ there is a $(p,\beta_p)$-Poincar\'e--Dirichlet space $(\mathbb{S},d_\mathbb{S},\mu_\mathbb{S},\mathcal{E}_\mathbb{S},\mathcal{F}_\mathbb{S},\Gamma_\mathbb{S})$ that satisfies Assumption \ref{Assumption:Confdim} for $d_{\textrm{H}} = \log(8)/\log(3)$ and some $\beta_p \geq p$.
Note that, by the same explanation as in the remark below, we could also refer to \cite{shimizu,kigami}.
What remains is to check is that $\beta_Q = k \cdot d_{\textrm{H}}$ for some $Q > 1$, and that $\beta_Q > Q$.
First, we have $\beta_p > p$ by \cite[Theorem 2.27]{shimizu}.
In order to find $Q$, it suffices to show
\[
    \beta_p = d_{\textup{H}} \textrm{ for some } p \in (1,\infty) \text{ and } p \mapsto \beta_p \text{ is continuous in $p \in (1,\infty)$}.
\]
We then find $Q$ by a continuity argument because $\beta_{k \cdot d_{\textrm{H}}} > k \cdot d_{\textrm{H}}$ as noted above.
Now, if $\mathcal{M}_p \in (0,\infty)$ is as in \cite[Theorem 8.1]{ReplacementGraph24}, then $\beta_p = \log(8\mathcal{M}_p^{-1})/\log(3)$ and $p \mapsto \mathcal{M}_p \in (0,\infty)$ is continuous. Hence, $\beta_p$ is also continuous.
Alternatively, we could also show the continuity by using the proof of \cite[Theorem 8.1]{ReplacementGraph24} and \cite[Lemma 4.4]{BourK}.
Note that we need to use the fact that $C$ in \cite[Lemma 4.4]{BourK} can be chosen to be independent of $p$ when $p$ varies on some compact subset of $[1,\infty)$.
For the other condition in the display, note that $\beta_p = d_{\textrm{H}}$ when $p$ is equal to the conformal dimension of $\mathbb{S}$. See \cite[Remark 9.17]{MuruganShimizu}.
Finally, the continuity can also be deduced from \cite[Proposition 4.7.5]{Kigamiweighted}.
\end{remark}

\begin{remark}
    The explanation in Remark \ref{remark:Explain} extends to the Sierpi\'nski gasket and many other post-critically finite self-similar sets. The Dirichlet spaces are provided, for instance, by \cite{p-Gasket,pcf-penergy} and \cite[Theorem 4.6]{kigami}.
    See \cite[Section 9.2]{KajinoShimizu} for details about the energy measures. Continuity of $p\mapsto \beta_p$ follows, similarly to the  previous example, from \cite[Theorem 8.1]{ReplacementGraph24}. 
    This discussion also extends to many Laakso type spaces constructed in \cite{AEBS25}, such as the Laakso diamond space.
\end{remark}

\begin{remark}\label{rem:Menger}
    This remark shows that the hypotheses of Theorem \ref{thm:non-attainment} are satisfied by the 3-dimensional Menger sponge $(\mathbb{M},d_{\mathbb{M}})$.
    The argument is essentially the same as in Remark \ref{remark:Explain}. However, let us note that currently there is no result that provides suitable $p$-Dirichlet spaces for every $p\in (1,\infty)$.
    This is, nevertheless, not an issue because we need it only for specific $p$.
    It is easy to check that the conformal dimensions $Q$ of the $k$-fold Cartesian product $(Z,d)$, for $k \geq 2$, of the Menger sponge is strictly larger than the conformal dimension $Q_{\mathbb{M}}$ of the Menger sponge itself.
    In fact, since $\mathbb{M}$ contains an isometric copy of the unit interval, $Q \geq Q_{\mathbb{M}} + 1$ by \cite[Example 4.1.9]{MT}.
    Hence, the existence of the required $Q$-Dirichlet space are provided by Kigami \cite{kigami} and Shimizu \cite{shimizu}. The same argument works for every generalized Sierpi\'nski carpet in the sense of \cite{shimizu}.
\end{remark}

\begin{remark}
    We note that Theorem \ref{thm:non-attainment-prod} fails in general if the parameter $\beta$ is different for the two factors.
    This seems to be subtle detail since, for instance $\mathbb{S} \times [0,1]$ attains its conformal dimension by \cite[Example 4.1.9]{MT}, where $\mathbb{S}$ is the Sierpi\'nski carpet.
    This shows that the conclusion of Theorem \ref{thm:non-attainment-prod} is false if the parameter $\beta$ for the two factors do not agree and $\beta > Q$ for one of the factors.
    On the other hand, we do not know whether $\mathbb{S} \times \mathbb{M}$, where $\mathbb{M}$ is the Menger sponge, attains its conformal dimension or not.
\end{remark}

\bibliographystyle{acm}
\bibliography{clp}

\end{document}